\documentclass{scrartcl}
\usepackage[margin=2.5cm,bottom=1cm,includefoot]{geometry}
\usepackage{amsmath,amssymb,amsthm}
\usepackage{bbm}
\usepackage{array}
\usepackage{mathrsfs}
\usepackage{enumerate}
\usepackage{graphicx}
\usepackage[utf8]{inputenc}
\usepackage{hyperref}
\usepackage[all]{hypcap}
\usepackage{float}

\usepackage{color}

\newtheorem{thm}{Theorem}
\newtheorem{lem}[thm]{Lemma}

\theoremstyle{remark}

\let\BFseries\bfseries\def\bfseries{\BFseries\mathversion{bold}}

\newcommand{\N}{\mathbb{N}}
\newcommand{\R}{\mathbb{R}}
\newcommand{\C}{\mathbb{C}}
\newcommand{\E}{\mathbb{E}}

\renewcommand{\P}{\mathbb{P}}
\newcommand{\Q}{\mathbb{Q}}
\newcommand{\CC}{\mathcal{C}}
\newcommand{\NN}{\mathcal{N}}
\newcommand{\BB}{\mathcal{B}}
\newcommand{\FF}{\mathcal{F}}
\newcommand{\1}{\mathbbm{1}}

\newcommand{\epsi}{\varepsilon}

\newcommand{\abs}{|}

\begin{document}

\title{Ornstein--Uhlenbeck process conditioned to have restricted $L_2$-norm}
\author{Frank Aurzada\qquad Mikhail Lifshits\qquad Max Wiegand}
\date{August 21, 2026}

\maketitle

\begin{abstract}
    We condition an Ornstein--Uhlenbeck process on having an atypically small $L_2$-norm on long time intervals.
    The weak limit of these conditioned processes is again an Ornstein--Uhlenbeck process, this time with a stronger mean-reverting force than the unconditioned process, which is controlled by the restriction on the $L_2$-norm. 
\end{abstract}



\allowdisplaybreaks

\section{Introduction and Main Results}
There is a long history of conditioning Brownian motion on highly improbable events and studying the weak limits of such conditioned processes. This starts with the pioneering work of Doob  \cite{doob57}  on Brownian motion conditioned to be positive. Similarly classic is the work of Knight \cite{KnightBM} on Brownian motion not leaving a fixed interval, which can be interpreted as conditioning on having small $L_\infty$-norm. Motivated by this, Aurzada, Lifshits, and Schickentanz, \cite{ALSbmtoou}, conditioned Brownian motion on having small $L_2$-norm. They show that the resulting limiting process is an Ornstein--Uhlenbeck process.\\
In the present paper, we show the analogous result when we start with the Ornstein--Uhlenbeck process.
Both mentioned results are examples of how restricting paths of stochastic processes to stay closer to the origin than usual is equivalent to an emergent mean-reverting effect controlling the process.
An important difference to \cite{KnightBM} is that a restricted $L_\infty$-norm is a hard boundary for the paths at any point, rather than a locally soft force that the restriction on the $L_2$-norm imposes. That explains why the limiting taboo process in \cite{KnightBM} has a mean-reverting force that becomes singular (i.e.\ repelling) at the boundary.

In order to formulate the main result, let us introduce the necessary notation. Let $\gamma>0$ and let $(U_t)_{t\geq 0}$ be an Ornstein--Uhlenbeck process with parameter $\gamma$ started in $x$, i.e.\ a Gaussian process with covariance kernel $K_U(t,s)=\frac{1}{2\gamma}\left(e^{-\gamma\abs t-s\abs}-e^{-\gamma(t+s)}\right)$ and expectation $m_U(t)=x\,e^{-\gamma t}$. Let $Z=(Z_t)_{t \geq 0}$ be the $L_2$-norm process defined by
\begin{equation*}
    Z_t:=\int_{0}^t U_s^2\,ds,\qquad t\geq 0.
\end{equation*}

Using this notation, we can formulate our main result.

\begin{thm}\label{thm:main}
    Let $0<\theta<\frac{1}{2\gamma}$. Assume there are $K>0$ and $\beta\in(0,1/2)$ such that the sequence $y_T$ satisfies $|y_T-\theta T|\leq K T^\beta$. Then
    \begin{equation*}
        \P_x\left((U_t)_{t\geq0}\in~\cdot~\vert~Z_T \leq y_T\right)
        \overset{w}{\longrightarrow}\mathcal{L}(\tilde{U}), \qquad \text{as $T\to\infty$,}
    \end{equation*}
     where $\tilde{U}$ is an Ornstein--Uhlenbeck process started at $x$ with parameter $\frac{1}{2\theta}$ and $\overset{w}{\longrightarrow}$ stands for weak convergence on $\CC([0,\infty))$. 
\end{thm}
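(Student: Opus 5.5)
It suffices to prove convergence of finite-dimensional distributions on a fixed window $[0,S]$ together with tightness in $\CC([0,S])$ for every $S>0$. Tightness comes essentially for free: the conditional law restricted to $[0,S]$ has a density with respect to the unconditioned law, and we will show that this density converges. Concretely, we use the Markov property at time $S$. For a bounded functional $F$ of $(U_t)_{t\le S}$,
\begin{equation*}
\E_x\left[F\,\big|\,Z_T\le y_T\right]=\frac{\E_x\left[F\cdot \P_{U_S}\left(Z_{T-S}\le y_T-Z_S\right)\right]}{\P_x\left(Z_T\le y_T\right)}.
\end{equation*}
The plan is therefore to establish a precise asymptotic, uniform in the starting point and in small shifts of the level, for
\begin{equation*}
p_T(u,y):=\P_u\left(Z_T\le y\right), \qquad y=\theta T+O(T^\beta).
\end{equation*}

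The asymptotic comes from an exponential change of measure via the Laplace transform. For $\lambda\ge 0$, the Cameron--Martin/Girsanov formula for the Ornstein--Uhlenbeck process gives the explicit expression
\begin{equation*}
\E_u e^{-\lambda Z_T}=\left(\frac{e^{\gamma T}}{\cosh(\kappa T)+\frac{\gamma}{\kappa}\sinh(\kappa T)}\right)^{1/2}\exp\left(-\frac{u^2}{2}\,\frac{(\kappa^2-\gamma^2)\sinh(\kappa T)}{\kappa\cosh(\kappa T)+\gamma\sinh(\kappa T)}\right),\qquad \kappa=\sqrt{\gamma^2+2\lambda}.
\end{equation*}
We then tilt by $e^{-\lambda Z_T}$, choosing $\lambda$ so that the tilted mean of $Z_T/T$ equals $\theta$. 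The log-Laplace rate is $\frac{\gamma-\kappa}{2}$, so the tilted mean is $\frac{1}{2\kappa}$, and we need $\kappa=\frac{1}{2\theta}>\gamma$. This is exactly where the hypothesis $\theta<\frac{1}{2\gamma}$ is used. Under the tilted measure, $U$ is an Ornstein--Uhlenbeck process with parameter $\kappa$ up to a boundary correction at time $T$. Moreover $Z_T$ satisfies a CLT with variance of order $T$ under the tilted measure, and a local limit theorem for $Z_T$ is available, for instance via the explicit characteristic function obtained by analytic continuation. From this we get a standard Bahadur--Rao-type asymptotic:
\begin{equation*}
p_T(u,y)\sim \frac{C\,\E_u e^{-\lambda Z_T}\,e^{\lambda y}}{\sqrt{T}},
\end{equation*}
uniformly for $u$ in compacts and $|y-\theta T|\le KT^\beta$. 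Since $\beta<1/2$, the shift $y-\theta T$ is negligible on the CLT scale, so it only enters through the factor $e^{\lambda y}$, which is the same in numerator and denominator up to the $Z_S$ term.

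Substituting the asymptotic into the ratio, the $T$-dependent factors cancel. In the limit, the density of the conditioned law on $\FF_S$ with respect to $\P_x$ is
\begin{equation*}
D_S=e^{-\lambda Z_S}\,e^{(\gamma-\kappa)S/2}\exp\left(-\frac{(\kappa-\gamma)}{2}\left(U_S^2-x^2\right)\right).
\end{equation*}
Here the factor $e^{(\gamma-\kappa)S/2}$ comes from the time shift $T\to T-S$, and the quadratic term is the limit of the $u^2$ term, whose coefficient tends to $\frac{\kappa^2-\gamma^2}{\kappa+\gamma}=\kappa-\gamma$. By It\^o's formula, $U_S^2-x^2=2\int_0^S U\,dU+S$ with $dU=-\gamma U\,dt+dW$. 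Hence $D_S$ equals the Girsanov density $\exp\left(-(\kappa-\gamma)\int_0^S U\,dW-\frac{(\kappa-\gamma)^2}{2}\int_0^S U^2dt\right)$, after checking that the $dt$ terms combine using $\lambda=\frac{\kappa^2-\gamma^2}{2}$. This density changes the drift from $-\gamma U$ to $-\kappa U$, which gives the Ornstein--Uhlenbeck process with parameter $\kappa=\frac{1}{2\theta}$. Finally, Scheffé's lemma upgrades convergence of the densities to total-variation convergence on $\FF_S$, which yields weak convergence on $\CC([0,S])$ for each $S$ and hence on $\CC([0,\infty))$.

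The main obstacle is the uniform exact asymptotic for $p_T(u,y-Z_S)$. The argument needs it uniformly over a random, unbounded starting point $U_S$ and shift $Z_S$, and it also needs dominated convergence to pass the limit inside $\E_x$. To handle this, I would first establish the asymptotic for $|u|\le M$ and $Z_S\le M$. For the complementary region, I would use the crude Chernoff bound $p_T(u,y)\le e^{\lambda y}\E_u e^{-\lambda Z_T}$. This bound is worse than the sharp asymptotic only by a factor $\sqrt{T}$, which is harmful. I would instead bound the tail contribution by the Chernoff bound for the full ratio together with Gaussian tails of $U_S$, and if necessary by a uniform local CLT under the tilted measure with Edgeworth-type control from the explicit characteristic function.
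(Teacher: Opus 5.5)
Your architecture matches the paper's. Both use the Markov property at a fixed time, a sharp small-ball asymptotic uniform over the window $|y-\theta T|\le KT^\beta$, convergence of the ratio to an explicit density, identification of that density via It\^o's formula as the Girsanov density turning the drift into $-\frac{1}{2\theta}U$, and extension from $\CC([0,S])$ to $\CC([0,\infty))$.

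Your tilting-plus-local-limit (Bahadur--Rao) derivation is the probabilistic reading of the paper's Bromwich/saddle-point computation on the line $\Re u=\overline{c}$. Indeed $\lambda=\frac{\kappa^2-\gamma^2}{2}=\overline{c}$, and the tilted variance $\frac{1}{2\kappa^3}=4\theta^3$ equals $\psi''(\overline{c})$. Be aware that this step carries almost all of the paper's work. Your sketch still owes two things there:
\begin{enumerate}[(i)]
\item decay of the tilted characteristic function in $v$, for which the paper uses $\Re g(c+iv)>\sqrt{|v|}/\gamma$;
\item control of the finite-$T$ corrections to the Laplace transform, which are the paper's $\rho_T$ and $\kappa_T$.
\end{enumerate}

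The obstacle in your last paragraph is not real, and the proposed remedy (Chernoff bounds, Gaussian tails, Edgeworth control) can be dropped. The ratio $f_T=\P_{U_S}(Z_{T-S}\le y_T-Z_S)/\P_x(Z_T\le y_T)$ is a probability density with respect to $\P_x$ on $\FF_S$ for every $T$. So Scheff\'e's lemma, which you already invoke, needs only two inputs:
\begin{enumerate}[(i)]
\item pointwise convergence $f_T\to D_S$ for each fixed value of $(U_S,Z_S)$;
\item $\E_x D_S=1$.
\end{enumerate}
For (i), the sharp asymptotic for a fixed starting point and levels $\theta T+O(T^\beta)$ suffices. No dominated convergence and no uniformity over unbounded starting points are needed.

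The paper does the equivalent thing in another form. It proves only $\liminf_{T\to\infty}\P_x(U\in D\mid Z_T\le y_T)\ge\E_x[\1_{U\in D}\,h]$, by restricting to sets $G_m$ on which $U_{t_0}$ and $Z_{t_0}$ are bounded and letting $m\to\infty$ by monotone convergence. It then obtains the matching $\limsup$ by applying this bound to $D^c$.

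What both versions genuinely require, and what you do not address, is $\E_x D_S=1$. That is, the stochastic exponential of $-(\kappa-\gamma)\int_0^\cdot U\,dW$ must be a true martingale, not merely a supermartingale. Without this, neither Scheff\'e's lemma nor the Girsanov identification goes through. The paper proves it by checking Novikov's condition on intervals of length $\gamma/(\kappa-\gamma)^2$, using the moment generating function of a squared Gaussian, and then chaining these intervals.

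Finally, there is a sign slip in $D_S$. The time shift $T\to T-S$ produces $e^{(\kappa-\gamma)S/2}$, not $e^{(\gamma-\kappa)S/2}$, because $\E_u e^{-\lambda Z_T}\approx e^{(\gamma-\kappa)T/2}$ decreases in $T$. With your sign, the It\^o computation leaves a spurious factor $e^{-(\kappa-\gamma)S}$, and $D_S$ would not have mean one. With the correct sign the $S$-terms cancel, and $D_S$ coincides with the paper's $h(U_S,Z_S,S)$.
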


\textbf{Comments.}  The simplest sequence covered by Theorem~\ref{thm:main} is $y_T=\theta T$. 
It is straightforward to compute that $\E[Z_T]\sim \frac{1}{2\gamma}T$. This means that the typical size of $Z_T$ is of order $\frac{1}{2\gamma} T$; and restricting $Z_T$ to be smaller than $y_T\sim \theta T$ with $\theta<\frac{1}{2\gamma}$ means that $Z_T\leq y_T$ is a small ball event. We will control the corresponding small ball probability for $y_T=\theta T$ in Theorem~\ref{thm:sbp} below and for more general sequences $y_T$ and starting points $x$ in a uniform version in Theorem~\ref{thm:uniformasymptotics} below.

\medskip
Let us recall that \cite{ALSbmtoou} showed that, for a Brownian motion $(B_t)_{t\geq 0}$ and $\theta_1>0$,
$$
\P_x\left((B_t)_{t\geq0}\in~\cdot~\vert~\int_0^T B_s^2 d s \leq \theta_1 T\right)
        \overset{w}{\longrightarrow}\mathcal{L}(U),
$$
where $U$ is an Ornstein--Uhlenbeck process with parameter $\frac{1}{2\theta_1}$. Now, our result shows that conditioning ``a second time'', namely restricting $U$ on $\int_0^T U_s^2 d s \leq \theta_2 T$ with $0<\theta_2<\theta_1$ leads again to an Ornstein--Uhlenbeck process with, this time, parameter $\frac{1}{2\theta_2}$, which is the same process as if we had conditioned Brownian motion on $\int_0^T B_s^2 d s \leq \theta_2 T$ in the first place. One is tempted to think that therefore Theorem~\ref{thm:main} can be obtained from \cite{ALSbmtoou} by passing directly to the stronger conditioning. This argument does not work for two reasons: First, its proof would involve an unjustified exchange of limits. Second, the uniform version w.r.t.\ the sequence $y_T$ cannot be inferred from \cite{ALSbmtoou}. Nonetheless, this double-conditioning is a good heuristic for our result.

\medskip
Our proof uses a result on small ball probabilities. 
We state the result here in a simple asymptotic form and generalize it to uniform bounds in Section~\ref{sec:unifsbp}, which is of interest in its own right.
\begin{thm}\label{thm:sbp}
    Let $0<\theta<\frac{1}{2\gamma}$. Then, as $T\to\infty$,
    \begin{equation*}
        \P_x\left(Z_T\leq \theta T\right)
        \sim C_\theta\,T^{-1/2}\,\exp\left(-\frac{\left(1-2\gamma\theta\right)^2}{8\theta}T-x^2\,\frac{1-2\gamma\theta}{4\theta}\right),
    \end{equation*}
    with
    \[C_{\theta}:=\frac{1}{\sqrt{\pi}}\frac{4\sqrt{\theta}}{\left(1+2\gamma\theta\right)^{3/2}\left(1-2\gamma\theta\right)}.\]
\end{thm}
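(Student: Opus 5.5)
We will prove Theorem~\ref{thm:sbp} by an exponential change of measure (Cramér/Esscher tilt), based on the explicit Laplace transform of $Z_T$ for the Ornstein--Uhlenbeck process. The first step is to compute, for $\lambda>-\gamma^2/2$,
\[
\E_x\left[e^{-\lambda Z_T}\right],
\]
using the Cameron--Martin--Girsanov theorem. Set $\delta:=\sqrt{\gamma^2+2\lambda}$. Changing measure from the OU process with parameter $\gamma$ to the OU process with parameter $\delta$, and using Itô's formula for $U_T^2$ to remove the stochastic integral $\int_0^T U_s\,dU_s$, we obtain
\[
\E_x\left[e^{-\lambda Z_T}\right]= e^{\frac{\gamma T}{2}}\, \E^{(\delta)}_x\left[\exp\left(-\tfrac{\delta-\gamma}{2}\,(U_T^2-x^2-T)\right)\right].
\]
Under the $\delta$-OU law, $U_T$ is Gaussian, so the right-hand side is explicit:
\[
\E_x\left[e^{-\lambda Z_T}\right]
= \left(\frac{e^{\gamma T}}{\cosh(\delta T)+\frac{\gamma}{\delta}\sinh(\delta T)}\right)^{1/2}\exp\left(-x^2\,\frac{\lambda\sinh(\delta T)}{\delta\cosh(\delta T)+\gamma\sinh(\delta T)}\right).
\]
Up to exponentially small corrections this equals
\[
\sqrt{\frac{2\delta}{\delta+\gamma}}\;\exp\left(-\frac{(\delta-\gamma)T}{2}-x^2\frac{\delta-\gamma}{2}\right).
\]

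Next we choose the tilt parameter. The log-Laplace transform per unit time is $\Lambda(\lambda)=-(\delta-\gamma)/2$. Solving $-\Lambda'(\lambda)=\frac{1}{2\delta}=\theta$ gives $\delta=\frac{1}{2\theta}$, and $\lambda^*=\frac{\delta^2-\gamma^2}{2}>0$ because $\theta<\frac{1}{2\gamma}$. We then define the tilted measure $d\Q/d\P_x=e^{-\lambda^* Z_T}/\E_x[e^{-\lambda^* Z_T}]$ and write
\[
\P_x(Z_T\le\theta T)=\E_x\left[e^{-\lambda^*Z_T}\right]e^{\lambda^*\theta T}\,\E_\Q\left[e^{-\lambda^*(Z_T-\theta T)}\1_{\{Z_T\le \theta T\}}\right].
\]
A short check shows $\lambda^*\theta-\frac{\delta-\gamma}{2}=-\frac{(1-2\gamma\theta)^2}{8\theta}$ and $\frac{\delta-\gamma}{2}=\frac{1-2\gamma\theta}{4\theta}$. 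These give the exponent in the statement, and $\sqrt{2\delta/(\delta+\gamma)}=\sqrt{2/(1+2\gamma\theta)}$ supplies part of the constant.

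It remains to show that
\[
\E_\Q\left[e^{-\lambda^*(Z_T-\theta T)}\1_{\{Z_T\le\theta T\}}\right]\sim \frac{1}{\lambda^*\sqrt{2\pi\sigma^2T}},
\]
where $\sigma^2=\Lambda''(\lambda^*)=\delta^{-3}=8\theta^3$. For the constant we compute $\lambda^*=\frac{(1-2\gamma\theta)(1+2\gamma\theta)}{8\theta^2}$, so
\[
\frac{1}{\lambda^*\sqrt{2\pi\cdot 8\theta^3}}\cdot\sqrt{\frac{2}{1+2\gamma\theta}}=\frac{4\sqrt\theta}{\sqrt\pi\,(1-2\gamma\theta)(1+2\gamma\theta)^{3/2}},
\]
which equals $C_\theta$. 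The required asymptotic is a local limit statement, and this is the main obstacle: we need a local CLT (Bahadur--Rao type) for $(Z_T-\theta T)/\sqrt T$ under $\Q$ at scale $1/\sqrt T$. We will proceed in three steps.

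\begin{enumerate}[(i)]
\item Write the $\Q$-characteristic function of $Z_T$ as the ratio $\E_x[e^{(-\lambda^*+it)Z_T}]/\E_x[e^{-\lambda^*Z_T}]$. The explicit formula above extends analytically to complex $\lambda=\lambda^*-it$, with $\delta(t)=\sqrt{\delta^2-2it}$ taken on the principal branch.
\item Show Gaussian behaviour for $|t|\le\epsi$, from the expansion of $\delta(t)$.
\item Show that for $|t|\ge\epsi$ the ratio decays like $e^{-cT}$. Indeed $\operatorname{Re}\,\delta(t)>\delta$, so the factor $e^{-(\delta(t)-\delta)T/2}$ decays exponentially. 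For large $|t|$ the decay is $\exp(-c\sqrt{|t|}\,T)$, which gives integrability.
\end{enumerate}

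Given these, Fourier inversion yields the local limit theorem in the form $\E_\Q[h(Z_T-\theta T)]\sim (2\pi\sigma^2T)^{-1/2}\int h$ for $h(u)=e^{-\lambda^* u}\1_{u\le0}$. Alternatively, we can obtain the same conclusion by expressing the target as a Laplace-type inverse integral along the line $\operatorname{Re}\lambda=\lambda^*$ and applying a saddle-point estimate; this is the route we will take if the smoothing step for the discontinuous $h$ becomes technical. The $x$-dependent prefactor is $e^{O(x^2)}$ and converges uniformly on the contour, so it contributes only the factor $\exp(-x^2\frac{1-2\gamma\theta}{4\theta})$.
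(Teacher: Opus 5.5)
Your route is, once the slips below are corrected, the same as the paper's. Undoing the tilt, your Fourier inversion of $\E_\Q[h(Z_T-\theta T)]$ with $\hat h(t)=1/(\lambda^*+it)$ is literally the Bromwich integral $\frac{1}{2\pi i}\int M_T(u)e^{u\theta T}\,\frac{du}{u}$ on the line $\Re u=\lambda^*$. Moreover, $\lambda^*=\frac{1-4\gamma^2\theta^2}{8\theta^2}$ is exactly the paper's saddle point $\overline{c}$.

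Your steps (ii) and (iii) correspond to the paper's local Taylor analysis near $\overline{c}$ and to its tail and error-integral bounds. The exponentially small corrections are controlled uniformly along the whole line because $\Re\delta(t)\ge\delta$ and $|(\delta(t)-\gamma)/(\delta(t)+\gamma)|\le1$. The paper's explicit bookkeeping also gives uniformity in $|x|\le T^{\beta/2}$ and $|y-\theta T|\le KT^\beta$. It needs this for Theorem~\ref{thm:main}, but the statement at hand does not.

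Three displayed formulas are wrong as written, though.

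\emph{The Girsanov identity.} It should read $\E_x[e^{-\lambda Z_T}]=\E^{(\delta)}_x[\exp(\frac{\delta-\gamma}{2}(U_T^2-x^2-T))]$, with no prefactor $e^{\gamma T/2}$. Your version is of order $e^{\delta T/2}$, which is impossible for a quantity bounded by $1$. Your subsequent closed form is nevertheless correct and agrees with \eqref{eq:LT}.

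\emph{The tilting identity.} Since $d\P_x/d\Q\propto e^{+\lambda^*Z_T}$, the identity must contain $\E_\Q[e^{+\lambda^*(Z_T-\theta T)}\1_{\{Z_T\le\theta T\}}]$. That is, $h(u)=e^{\lambda^*u}\1_{\{u\le0\}}$, with $\int h=1/\lambda^*$. With your sign, $\int h=\infty$ and the local limit statement says nothing.

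\emph{The variance.} $\Lambda'(\lambda)=-\frac{1}{2\delta}$ gives $\sigma^2=\Lambda''(\lambda^*)=\frac{1}{2\delta^3}=4\theta^3$, which is the paper's $\psi''(\overline{c})$, not $\delta^{-3}=8\theta^3$. With $8\theta^3$ your product equals $C_\theta/\sqrt2$, so your constant check contains two cancelling errors. Only with $\sigma^2=4\theta^3$ does the computation actually give $C_\theta$.
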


\textbf{Related work.} Shortly before the completion of this work, the parallel preprint \cite{papertobiasschmidt} appeared. The special case $y_T=\theta T+z$ with fixed $z\in\R$ of our Theorem~\ref{thm:main} as well as our Theorem~\ref{thm:sbp} are contained in their Corollary~3.2. Our Theorem~\ref{thm:uniformasymptotics} below  additionally provides uniformity in the small ball probability result when both the starting point and the constraint are allowed to vary with $T$. The two papers also differ in tools, perspective, and language: ours is formulated in the language of stochastic analysis and change of measure, whereas \cite{papertobiasschmidt} adopts a statistical-mechanics and spectral-theoretic viewpoint based on path ensembles, Feynman–Kac semigroups, and analytic perturbation theory.

Another paper that has to be mentioned is \cite{Fatalov2009}. There, a result similar to
our Theorem~\ref{thm:sbp} can be found. However, it contains some minor errors, and the proof has gaps.

Theorem~\ref{thm:sbp} for $x=0$ is classical, a version of this can be found e.g.\ in \cite{BercuRouault2002}. Let us stress that we need (a uniform version of) Theorem~\ref{thm:sbp} for general $x\in\R$ for the proof of Theorem~\ref{thm:main}. For further literature on $L_2$ small ball estimates see e.g.\ \cite{Li2001} and the survey on the subject \cite{NazarovPetrova2023} and references therein. 
Other interesting and conceptually related papers are \cite{BrycDembo1997,BercuGamboaLavielle2000,ChetriteTouchette2015,duBuissonTouchette2023}.

\medskip

\textbf{Outline of the paper.} This article is structured as follows. As mentioned, we prove uniform bounds for the asymptotic behaviour of the small ball probabilities in Section~\ref{sec:unifsbp}, which proves Theorem~\ref{thm:sbp} as well.
Section~\ref{sec:proofmainthm} then contains the proof of Theorem~\ref{thm:main}, where we use Girsanov's theorem to show that under a change of measure, the conditioned process has the claimed law.

\section{Uniform Small Ball Probabilities}\label{sec:unifsbp}

\subsection{Statement}

As mentioned before Theorem \ref{thm:sbp}, we need a version of the small ball asymptotics for the proof of Theorem \ref{thm:main}
that is uniform in $x$ and $y$.

\begin{thm}\label{thm:uniformasymptotics}
    Let $0<\theta<\frac{1}{2\gamma}$. Then, for any $\delta\in(0,1)$, $\beta\in(0,\frac12)$ and $K>0$, there exists a $T_{\delta,\beta,K}>0$, such that for all $T\geq T_{\delta,\beta,K}$ and any $x\in\R$, $y>0$ with
    \[\abs x\abs\leq T^{\beta/2}\quad\text{and}\quad \left|y-\theta T\right|\leq K\,T^\beta\]
    it holds
    \begin{equation*}
        (1-\delta)\,C_\theta\,A(T)\leq\P_x(Z_T\leq y)\leq (1+\delta)\,C_\theta\,A(T),
    \end{equation*}
    with
    \[C_{\theta}:=\frac{1}{\sqrt{\pi}}\frac{4\sqrt{\theta}}{\left(1+2\gamma\theta\right)^{3/2}\left(1-2\gamma\theta\right)},\]
    and
    \[A(T):=T^{-1/2}\,\exp\left(-\frac{1}{8\frac{y}{T}}\left(1-2\gamma \frac{y}{T}\right)^2T-\frac{x^2}{4\theta}\left(1-2\gamma\theta\right)\right).\]
\end{thm}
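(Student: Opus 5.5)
We will prove the uniform small ball estimate by an exponential change of measure (Cramér/Girsanov tilt) followed by a local central limit theorem for the tilted variable. Set $\theta_T:=y/T$ and choose the tilting parameter $\lambda=\lambda(\theta_T)>0$ so that, after tilting by $e^{-\lambda Z_T}$, the mean of $Z_T/T$ is $\theta_T$. The tilted process is again Ornstein--Uhlenbeck. By Girsanov's theorem, the density of the law of an OU process with parameter $\kappa$ with respect to that with parameter $\gamma$ on $[0,T]$ is
\[
\exp\Bigl(-(\kappa-\gamma)\int_0^T U_s\,dU_s-\tfrac12(\kappa^2-\gamma^2)Z_T-\ldots\Bigr),
\]
and by It\^o, $\int_0^T U_s\,dU_s=\tfrac12(U_T^2-x^2-T)$. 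Hence
\[
\frac{d\P_x^{(\gamma)}}{d\P_x^{(\kappa)}}
=\exp\Bigl(\tfrac{\kappa-\gamma}{2}(U_T^2-x^2-T)+\tfrac{\kappa^2-\gamma^2}{2}Z_T\Bigr).
\]
We choose $\kappa=\kappa_T:=\frac{1}{2\theta_T}$, so that under $\P^{(\kappa)}$ we have $\E Z_T\approx \theta_T T=y$. This gives the exact identity
\[
\P_x(Z_T\le y)=e^{-\frac{\kappa-\gamma}{2}(x^2+T)+\frac{\kappa^2-\gamma^2}{2}y}\;
\E^{(\kappa)}_x\Bigl[e^{\frac{\kappa-\gamma}{2}U_T^2}\,e^{-\frac{\kappa^2-\gamma^2}{2}(y-Z_T)}\1_{\{Z_T\le y\}}\Bigr].
\]
With $\kappa=1/(2\theta_T)$ the deterministic exponent is $-\frac{(1-2\gamma\theta_T)^2}{8\theta_T}T-\frac{\kappa-\gamma}{2}x^2$. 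This matches $A(T)$, except that $x^2$ carries $\kappa_T$ rather than $1/(2\theta)$. The replacement costs $x^2|\kappa_T-\tfrac1{2\theta}|=O(T^{\beta}\cdot T^{\beta-1})\to0$ because $\beta<1/2$, so it is harmless.

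It remains to show $\E^{(\kappa)}_x[\cdots]\sim C_\theta T^{-1/2}$ uniformly. Write $c=\frac{\kappa^2-\gamma^2}{2}$ and $a=\frac{\kappa-\gamma}{2}$, and note $a<\kappa$, since the stationary variance $1/(2\kappa)$ makes $e^{aU_T^2}$ integrable. The expectation is a joint functional of $(U_T,\,y-Z_T)$. We need a local limit theorem: under $\P^{(\kappa)}_x$, the pair $\bigl(U_T,(Z_T-y)\bigr)$ is asymptotically independent, with $U_T$ approximately $\NN(0,1/(2\kappa))$ and with $Z_T-y$ having density near $0$ of about $(2\pi\sigma^2T)^{-1/2}$, where $\sigma^2=\frac{1}{2\kappa^3}$ is the asymptotic variance of $Z_T$ per unit time. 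The approximation must be uniform in $|x|\le T^{\beta/2}$, in $\kappa$ near $1/(2\theta)$, and in the shift $\E^{(\kappa)}Z_T-y=O(1+x^2)=O(T^\beta)$, which is $o(\sqrt T)$. Granting this, we get
\[
\E\approx (2\pi\sigma^2T)^{-1/2}\,\E\bigl[e^{aG^2}\bigr]\int_0^\infty e^{-cu}\,du
=(2\pi\sigma^2T)^{-1/2}\Bigl(1-\tfrac{a}{\kappa}\Bigr)^{-1/2}\frac1c,
\]
with $G\sim\NN(0,1/(2\kappa))$, and one checks that this equals $C_\theta T^{-1/2}$ with $\kappa=1/(2\theta)$.

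The main obstacle is this uniform local limit theorem. To prove it we will use the explicit Laplace/characteristic function of $(U_T^2,Z_T)$ under OU, which is Cameron--Martin type: it is an explicit ratio of hyperbolic functions times the exponential of a quadratic in $x$. By Fourier inversion in the $Z_T$ variable, we will show the following three facts. First, for $|s|\le T^{-1/2+\epsi}$ the characteristic function is Gaussian up to $1+o(1)$, uniformly in the parameter ranges. Second, for larger $|s|$ the modulus decays fast enough, like $\prod$-type bounds $|\cdot|\le (1+cs^2)^{-1/4}$ times $e^{-c'T\min(s^2,|s|)}$-type decay. Third, the $x$-dependent factor only contributes $\exp(O(x^2|s|))=1+o(1)$ on the main range. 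The unbounded weight $e^{aU_T^2}$ and the indicator are handled by absorbing $e^{aU_T^2}$ into a further (shifted) Gaussian tilt, and by smoothing the indicator times $e^{-cu}$ with standard sandwich arguments. Finally, the $(1\pm\delta)$ bounds follow once all error terms are shown to be $o(1)$ uniformly in $x$ and $y$, with $T_{\delta,\beta,K}$ chosen accordingly. Setting $x$ fixed and $y=\theta T$ recovers Theorem~\ref{thm:sbp}.
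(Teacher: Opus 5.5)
Your argument is correct in outline, and it is organized differently from the paper's, although the analytic core turns out to be the same.

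\textbf{What the paper does.} It never changes measure in Section~\ref{sec:unifsbp}. It inverts the explicit Laplace transform $M_T$ along the fixed line $\Re u=\overline{c}$ and factorizes the integrand exactly as $a_x(u)e^{T\psi(u)}(1+\rho_T)(1+\kappa_T)$. It then estimates separately the tail of the line, the intermediate range, a Gaussian window of width $T^{-\alpha}$, and the boundary corrections (Lemmas~\ref{lem:errorint}--\ref{lem:I_T(Talpha)2}).

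\textbf{What you do differently.} You perform the same exponential tilt probabilistically. Your $c=\frac{\kappa_T^2-\gamma^2}{2}$ is exactly the paper's saddle point $c^\ast_T$. Girsanov and It\^o then give $e^{-cZ_T}\,d\P^{(\gamma)}_x=e^{a(U_T^2-x^2-T)}\,d\P^{(\kappa_T)}_x$, which is the same density that reappears in Lemma~\ref{lem:girsanovdensitymartingale}. This buys transparency:
\begin{itemize}
\item The rate and the $x^2$-term come out exactly.
\item $C_\theta$ visibly splits into the local CLT density with $\sigma^2=\frac{1}{2\kappa^3}=4\theta^3=\psi''(\overline{c})$, the factor $\E[e^{aG^2}]=\sqrt{2g(\overline{c})/(1+g(\overline{c}))}$, and $\frac1c$. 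Equivalently, $a_x(\overline{c})=\frac1c\,\E[e^{aG^2}]\,e^{-ax^2}$.
\item Tilting at $c^\ast_T$ itself removes the paper's comparison between the lines $\overline{c}$ and $c^\ast_T$ (the term $D^{(5)}$). The cost is that your estimates must hold uniformly in $\kappa$ near $\frac{1}{2\theta}$.
\end{itemize}

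\textbf{Where the work actually lies.} All of it sits in the uniform local limit theorem, which you only sketch. Proving it by Fourier inversion is literally the paper's computation. The Fourier transform of $\mu(d\zeta)=\E^{(\kappa)}_x[e^{aU_T^2};\,Z_T\in d\zeta]$ is $e^{a(x^2+T)}M_T(c-is)$. Pairing $\mu$ via Parseval with the weight $e^{-c(y-\zeta)}\1_{\{\zeta\le y\}}$ produces exactly the factor $e^{uy}/u$ with $u=c+iv$. So your prefactor times $\int e^{-c(y-\zeta)}\1_{\{\zeta\le y\}}\,\mu(d\zeta)$ is precisely the Bromwich integral \eqref{eq:bromwich} on $\Re u=c$. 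In particular, you need neither a joint transform of $(U_T^2,Z_T)$ nor the extra Gaussian tilt or the smoothing/sandwich step, and your three facts become the paper's lemmas.

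\textbf{One correction to your second fact.} For large $|s|$ the modulus decays like $e^{-c'T\sqrt{|s|}}$, not like $e^{-c'T|s|}$. This is because $\Re g(c+is)\asymp\sqrt{|s|}/\gamma$ (Lemma~\ref{lem:reg}(c),(d)). The weaker decay is still ample for the tail estimate.
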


\subsection{Proof}

In the first two subsections of the proof we formulate the Bromwich inversion of the Laplace transform of $Z_T$ and decompose the integrand into a more manageable \textit{main} integrand, which contains the asymptotically contributing parts of the integrand, and an \textit{error} integrand.
Then we give a uniform asymptotic bound for the error integral.
We do the same for the main integral, however we consider the contributions of the main integrand along different parts of the Bromwich line separately.
We finally put all those separate asymptotic bounds together, identify the remaining asymptotically contributing terms and evaluate them. For ease of notation we introduce $z:=\frac{y}{T}>0$. Note that $z\to\theta$ for $T\to\infty$.
    
\subsubsection{Bromwich inversion}
    
Note that, in distribution, $U$ can be written as
\begin{equation*}
    U_t=\frac{1}{\sqrt{2\gamma}}\, e^{-\gamma t}W_{e^{2\gamma t}-1}
\end{equation*}
where $W$ is a Brownian motion started in $\sqrt{2\gamma}x$. Formula 7.1.9.3 in \cite{BorodinSalminen} (also see \cite{Dankel1991}) provides the Laplace transform of $Z_T$ (for $\Re u\ge 0$ and $T\geq 0$)
\begin{equation}\label{eq:LT}
    \mathbb{E}_x\!\left[e^{-u Z_T}\right]
    =\frac{\sqrt{g(u)}\,e^{\gamma T/2}}{\sqrt{\sinh(\gamma T g(u))+g(u)\cosh(\gamma T g(u))}}\,
    \exp\!\left(
    -\frac{\gamma x^2\,(g(u)^2-1)\,\sinh(\gamma T g(u))}{2\,\left(\sinh(\gamma T g(u))+g(u)\cosh(\gamma T g(u))\right)}
    \right),
\end{equation}
where
\begin{equation}\label{eq:gdef}
    g(u):=\sqrt{1+\frac{2u}{\gamma^2}}\qquad \text{(principal branch)},
\end{equation}
and we can use the square root as we only consider $\Re u\geq 0$.
Since $M_T(u):=\mathbb{E}_x\left[e^{-uZ_T}\right]$ is analytic on $\Re u>0$, for $y>0$ and any $c>0$, the cumulative distribution function satisfies the inversion formula
\begin{equation}\label{eq:bromwich}
    \P_x(Z_T\leq y)=\lim_{H\to\infty}\frac{1}{2\pi i}\int_{c-iH}^{c+iH}\frac{M_T(u)}{u}\,e^{u y}\,du.
\end{equation}
Note that we switch between the complex line integrals and their path integral representation
\[\frac{1}{2\pi i}\int_{c-iH}^{c+iH}f(u)\,du=\frac{1}{2\pi}\int_{-H}^H f(c+iv)\,dv\]
for any occurring integrable complex function $f$ without explicitly stating this every time.

\subsubsection{Exact factorization}

Defining $\epsi_T(g):=\frac{g-1}{g+1}e^{-2\gamma  T g},$ we rewrite
\begin{equation}\label{eq:sinhcosh-exact}
    \sinh(\gamma  T g)+g\cosh(\gamma  T g)=e^{\gamma  Tg}\frac12\left(1-e^{-2\gamma  Tg}+g(1+e^{-2\gamma  Tg})\right)
        =\frac{g+1}{2}e^{\gamma  T g}\left(1+\epsi_T(g)\right), 
\end{equation}
valid for all complex $g$. Further we define
\begin{equation}\label{eq:ax-def}
    a_x(u):=\frac{1}{u}\sqrt{\frac{2g(u)}{1+g(u)}}\,\exp\left(-\frac{\gamma}{2}x^2(g(u)-1)\right),
\end{equation}
\begin{equation}\label{eq:psi-def}
    \psi(u):=\frac{\gamma}{2}\,(1-g(u))+zu,
\end{equation}
as well as
\begin{equation}\label{eq:rho-def}
    \rho_T(u):=\left(1+\epsi_T(g(u))\right)^{-1/2}-1,
\end{equation}
\begin{equation}\label{eq:kappa-def}
    \kappa_T(u):=\exp\left(-\frac{\gamma}{2}x^2(g(u)-1)\left(\frac{1-e^{-2\gamma  T g(u)}}{1+\epsi_T(g(u))}-1\right)\right)-1.
\end{equation}
    
Using these auxiliary functions we can decompose the integrand in \eqref{eq:bromwich} using the explicit formula for \eqref{eq:LT}:
\begin{equation}\label{eq:exact-decomp}
    \frac{M_T(u)}{u}\,e^{uy}
    =a_x(u)\;\exp\left(T\psi(u)\right)\;\Big(1+\rho_T(u)\Big)\,\Big(1+\kappa_T(u)\Big).
\end{equation}
    
Using this decomposition, we can split \eqref{eq:bromwich} into the \emph{main} and \emph{error} integrals, for $H>0$,
\begin{align*}\label{eq:IandE}
    I_T(H)&:=\frac{1}{2\pi i}\int_{c-iH}^{c+iH} a_x(u)e^{T\psi(u)}\,du,\\
    E_T(H)&:=\frac{1}{2\pi i}\int_{c-iH}^{c+iH} a_x(u)e^{T\psi(u)}\left(\rho_T(u)+\kappa_T(u)+\rho_T(u)\kappa_T(u)\right)\,du.
\end{align*}
Then by \eqref{eq:bromwich} and \eqref{eq:exact-decomp},
\begin{equation}\label{eq:FT-splitH}
    \P_x(Z_T\leq y)=\lim_{H\to\infty}\left(I_T(H)+E_T(H)\right),
\end{equation}
where we can choose any $c>0$ for the path integrals.
    
\subsubsection{Error integral}
First, we note for later that $a_x(u)$ can be uniformly bounded on any Bromwich line $u=c+iv, c>0$:
\begin{align}\label{eq:axboundprelim}
    \abs a_x(u)\abs&=\frac{1}{\abs u\abs}\left(\frac{2}{\abs1+\frac{1}{g(u)}\abs}\right)^{1/2} \exp\left(-\frac{\gamma}{2}x^2(\Re g(u) - 1)\right)\nonumber\\
    &\leq \frac{1}{\Re u}\left(\frac{2}{1+\Re\frac{1}{g(u)}}\right)^{1/2} \exp\left(-\frac{\gamma}{2}x^2(\Re g(u) - 1)\right)\nonumber\\
    &\leq \frac{1}{c}\sqrt{2}\exp\left(-\frac{\gamma}{2}x^2(g(c) - 1)\right),
\end{align}
where we used $\Re\frac{1}{g(u)}\geq0$ (Lemma \ref{lem:reg}(e)) and $\Re g(u)\geq g(c)$ (Lemma~\ref{lem:reg}(a)). Since $g(c)>1$ (Lemma \ref{lem:reg}(a)) we have $\frac{2g(c)}{1+g(c)}>1$, and therefore \eqref{eq:axboundprelim} yields
\begin{align}\label{eq:axbound}
    \abs a_x(u)\abs
    \leq \sqrt{2}\,\frac{1}{c}\left(\frac{2g(c)}{1+g(c)}\right)^{1/2}\exp\left(-\frac{\gamma}{2}x^2(g(c) - 1)\right)
    =\sqrt{2}\,a_x(c).
\end{align}
Now, we identify the order of the terms in the error integral.
\begin{lem}\label{lem:errorint}
    Let $c>0$. There exists a $T_E>0$, such that for all $T\geq T_E
    $, all $\abs x\abs\leq\sqrt{T}$ and $y>0$, we have
    \begin{equation*}
        \limsup_{H\to\infty} \left|E_T(H)\right|\leq C\, a_x(c)\,T^{-2}\,e^{T\psi(c)},
    \end{equation*}
    where the constant $C>0$ is independent of $x,y,u,T$. 
\end{lem}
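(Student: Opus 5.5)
The plan is to bound the error integrand pointwise on the Bromwich line $u=c+iv$ by $a_x(c)\,e^{T\psi(c)}$ times a factor that is exponentially small in $T$ and integrable in $v$ uniformly in $T$. Dominated convergence then removes the $\limsup_{H\to\infty}$, and an exponentially small bound is in particular $\le C T^{-2}$ for $T$ large.

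\emph{The factor $e^{T\psi}$.} On the line, $|e^{T\psi(u)}| = \exp\bigl(T(\tfrac{\gamma}{2}(1-\Re g(u))+zc)\bigr)$. Since $\Re g(u)\ge g(c)$ by Lemma~\ref{lem:reg}(a), this is at most $e^{T\psi(c)}$, uniformly in $z=y/T>0$.

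\emph{The factor $a_x$.} By the first two lines of \eqref{eq:axboundprelim} and $\Re g(u)\ge g(c)$,
\[
|a_x(u)|\le \frac{\sqrt2}{|u|}\exp\Bigl(-\tfrac{\gamma}{2}x^2(g(c)-1)\Bigr).
\]
Since $\frac{2g(c)}{1+g(c)}\ge1$, we have $a_x(c)\ge \frac1c\exp\bigl(-\tfrac{\gamma}{2}x^2(g(c)-1)\bigr)$, and hence $|a_x(u)|\le \sqrt2\,c\,a_x(c)/|u|$. This keeps the decay $1/|u|$, which \eqref{eq:axbound} alone would discard.

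\emph{The correction factors.} For $\Re g>0$ we have $|g-1|\le|g+1|$, so $|\epsi_T(g(u))|\le e^{-2\gamma T\Re g(u)}\le e^{-2\gamma T g(c)}$. Since $g(c)>1$, this is at most $\tfrac12$ once $T\ge T_E$. Expanding $(1+\epsi)^{-1/2}$ then gives $|\rho_T(u)|\le C e^{-2\gamma T\Re g(u)}$.

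For $\kappa_T$, write
\[
\frac{1-e^{-2\gamma Tg}}{1+\epsi_T}-1=-\frac{e^{-2\gamma Tg}+\epsi_T}{1+\epsi_T}.
\]
This has modulus at most $4e^{-2\gamma T\Re g}$, so the exponent in \eqref{eq:kappa-def} has modulus at most $2\gamma x^2|g(u)-1|\,e^{-2\gamma T\Re g(u)}$.

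Here the restriction $|x|\le\sqrt T$ enters. Since $1+2u/\gamma^2$ has positive real part, $|\arg g(u)|\le\pi/4$, so $|g(u)-1|\le |g(u)|+1\le 2\sqrt2\,\Re g(u)$. The exponent is therefore bounded by
\[
C\,T\Re g(u)\,e^{-2\gamma T\Re g(u)}\le C' e^{-\gamma T\Re g(u)},
\]
using $s e^{-\gamma s}\le C''$ with $s=T\Re g(u)$. This is uniformly small, so $|e^w-1|\le 2|w|$ yields $|\kappa_T(u)|\le C e^{-\gamma T\Re g(u)}$. Altogether,
\[
|\rho_T+\kappa_T+\rho_T\kappa_T|\le C e^{-\gamma T\Re g(u)}.
\]

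\emph{Integrability and conclusion.} Using $\Re\sqrt w=\sqrt{(|w|+\Re w)/2}\ge\sqrt{|w|/2}$, we get $\Re g(u)\ge \max\{g(c),\,\sqrt{|v|}/\gamma\}$. Splitting $e^{-\gamma T\Re g}\le e^{-\gamma T g(c)/2}\,e^{-\sqrt{|v|}/2}$ for $T\ge1$, the integrand is bounded by
\[
C\,a_x(c)\,e^{T\psi(c)}\,e^{-\gamma Tg(c)/2}\,\frac{e^{-\sqrt{|v|}/2}}{|c+iv|}.
\]
This is integrable in $v$, independently of $H$, $x$, $y$ and $T$. Hence $\limsup_H|E_T(H)|\le C a_x(c)e^{T\psi(c)}e^{-\gamma T g(c)/2}$, and $e^{-\gamma Tg(c)/2}\le T^{-2}$ for large $T$.

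The only delicate point is the $\kappa_T$ term, where the potentially large prefactor $x^2|g-1|$ must be absorbed. This works because $|x|^2\le T$ and $|g|\lesssim\Re g$, which lets the polynomial factor be swallowed by the exponential $e^{-2\gamma T\Re g}$.
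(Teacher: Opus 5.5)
Your proof is correct and follows the same route as the paper. It uses the same pointwise bounds $|\rho_T(u)|\lesssim e^{-2\gamma T\Re g(u)}$ and $|\kappa_T(u)|\lesssim e^{-\gamma T\Re g(u)}$, with $x^2\le T$ absorbing the factor $x^2\Re g(u)$ into the exponential, together with $|e^{T\psi(u)}|\le e^{T\psi(c)}$ and a uniform bound on $|a_x(u)|$ in terms of $a_x(c)$.

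The only difference is the final integration. The paper spends all of $e^{-\gamma T\Re g(c+iv)}\le e^{-T\sqrt{|v|}}$ on the $v$-integral and gets exactly $4/T^2$ via Lemma~\ref{lem:int}. You instead split off $e^{-\gamma T g(c)/2}$ and keep a $T$-independent integrable majorant, which gives a stronger, exponentially small bound. The extra $1/|u|$ decay you retain in $a_x$ is not needed: $c/|u|\le 1$ recovers \eqref{eq:axbound}, and $e^{-\sqrt{|v|}/2}$ is already integrable.
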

\begin{proof}
    Since $\Re g(u)>1$ by Lemma \ref{lem:reg}(a), we first get, for all $T\geq 1$,
    \begin{align}\label{eq:epsTbound}
        \abs\epsi_T(g(u))\abs=\frac{\abs g(u)-1\abs}{\abs g(u)+1\abs}\,e^{-2\gamma T\Re g(u)}
        =\left(\frac{(\Re g(u)-1)^2+(\Im g(u))^2}{(\Re g(u)+1)^2+(\Im g(u))^2}\right)^{1/2}\,e^{-2\gamma T\Re g(u)}\leq e^{-2\gamma T\Re g(u)}<e^{-2\gamma}.
    \end{align}
    This shows, that $\abs \epsi_T(g(u))\abs<e^{-2\gamma}<e^{-\gamma}<1$ for all $T\geq 1$, therefore Lemma \ref{lem:invsqrtTaylor} gives the existence of some constant $C_\rho>0$ (independent of $u,T$) such that\medskip
    \begin{equation} \label{eq:rhobound}
        \abs \rho_T(u)\abs=\abs (1+\epsi_T(g(u)))^{-1/2}-1\abs\leq C_\rho\,\abs\epsi_T(g(u))\abs\leq C_\rho\,e^{-2\gamma T\Re g(u)}.
    \end{equation}
    Turning our attention to $\kappa$, some rearranging yields
    \begin{align*}
        b_x(u):=&-\frac{\gamma}{2}x^2(g(u)-1)\left(\frac{1-e^{-2\gamma T g(u)}}{1+\epsi_T(g(u))}-1\right)\\
        =&-\frac{\gamma}{2}x^2(g(u)-1)\left(\frac{1-e^{-2\gamma T g(u)}-1-\frac{g(u)-1}{g(u)+1}e^{-2\gamma T g(u)}}{1+\epsi_T(g(u))}\right)\\
        =&~\frac{\gamma}{2}x^2(g(u)-1)\left(1+\frac{g(u)-1}{g(u)+1}\right)\,\frac{e^{-2\gamma T g(u)}}{1+\epsi_T(g(u))}\\
        =&~\frac{\gamma}{2}x^2\frac{g(u)-1}{g(u)+1}\,2g(u)\,\frac{e^{-2\gamma T g(u)}}{1+\epsi_T(g(u))}\\
        =&~\gamma x^2\frac{g(u)-1}{g(u)+1}\,g(u)\,\frac{e^{-2\gamma T g(u)}}{1+\epsi_T(g(u))}.
    \end{align*}
    As above, $\Re g(u)>1$ (Lemma \ref{lem:reg}(a))  implies $\left|\frac{g(u)-1}{g(u)+1}\right|\leq 1$ and since $\abs \epsi_T(g(u))\abs\leq e^{-2\gamma}$, by the inverse triangle inequality we have $\abs 1+\epsi_T(g(u))\abs\geq 1-e^{-2\gamma}$ for all $T\geq 1$. 
    Collecting these observations and using Lemma \ref{lem:reg}(b), we can bound $b_x(u)$ for all $T\geq 1$ by
    \begin{align*}\label{eq:kappaexpobound}
        \abs b_x(u)\abs&=\gamma x^2\left|\frac{g(u)-1}{g(u)+1}\right|\,\abs g(u)\abs\,\frac{e^{-2\gamma T\Re g(u)}}{\abs 1+\epsi_T(g(u))\abs}
        \leq \gamma x^2\sqrt{2}\,\Re g(u)\frac{e^{-2\gamma T\Re g(u)}}{1-e^{-2\gamma}}\\
        &\leq \frac{\sqrt{2}\gamma x^2}{1-e^{-2\gamma}}\frac{\Re g(u)}{\gamma T\Re g(u)}e^{-\gamma T\Re g(u)}\leq \frac{\sqrt{2}}{1-e^{-2\gamma}}e^{-\gamma T\Re g(u)},
    \end{align*}
    as $x^2\leq T$ and in the second to last inequality, we used $e^{-s}<\frac{1}{s}$ for $s>0$.
    This shows there exists some constant $C_b>0$ (independent of $x,y,u,T$) such that $\abs b_x(u)\abs\leq C_b\,e^{-\gamma T\Re g(u)}<C_b\,e^{-\gamma T}$ for all $T\geq 1$.
    Let $T_E>1$ be such that $e^{-\gamma T}<\frac{1}{2C_b}$ for all $T\geq T_E$.
    For $T\geq T_E$, we therefore have $\abs b_x(u)\abs<C_b\,e^{-\gamma T}<\frac12$ and Lemma \ref{lem:expTaylor} yields
    \begin{equation}\label{eq:kappabound}
        \abs\kappa_T(u)\abs=\left|e^{b_x(u)}-1\right|\leq \frac{e}{1-\frac12}\abs b_x(u)\abs\leq 2e\,C_b\,e^{-\gamma T\Re g(u)}:= C_\kappa \,e^{-\gamma T\Re g(u)},
    \end{equation}
    where $C_\kappa>0$ is a constant independent of $x,u,T$.\bigskip\\
    With the bounds \eqref{eq:axbound}, \eqref{eq:rhobound} and \eqref{eq:kappabound}, we can finally bound the error integral, for all $T\geq T_E$:
    \begin{align*}
        \left|E_T(H)\right|&\leq\frac{1}{2\pi}\int_{c-iH}^{c+iH}\abs a_x(u)\abs e^{T\Re\psi(u)}\left(\abs\rho_T(u)\abs+\abs\kappa_T(u)\abs+\abs\rho_T(u)\abs\,\abs\kappa_T(u)\abs\right)\,du\\
        &\leq \frac{\sqrt{2}\,a_x(c)}{2\pi}\int_{c-iH}^{c+iH} e^{T\frac{\gamma}{2}(1-\Re g(u))}\,e^{y c}\left(C_\rho\,e^{-2\gamma T\Re g(u)}+ C_\kappa \,e^{-\gamma T\Re g(u)}+C_\rho C_\kappa \,e^{-3\gamma T\Re g(u)}\right)\,du\\
        &\leq \frac{\sqrt{2}\,a_x(c)}{2\pi}\int_{c-iH}^{c+iH}  e^{T\frac{\gamma}{2}(1-\Re g(u))}\,e^{y c}\,C_{\rho,\kappa}\,e^{-\gamma T\Re g(u)}\,du,
    \end{align*}
    where $C_{\rho,\kappa}:=C_\rho+C_\kappa+C_\rho C_\kappa$. Using $\Re g(u)\geq g(c)$ (Lemma \ref{lem:reg}(a)) 
    we obtain, for all $T\geq T_E$,
    \begin{equation}
        \left|E_T(H)\right|\leq \frac{\sqrt{2}\,a_x(c)}{2\pi}e^{yc}e^{T\frac{\gamma}{2}\left(1-g(c)\right)}
        \int_{-H}^H  C_{\rho,\kappa} \,e^{-\gamma T\Re g(c+iv)}\,dv.
    \end{equation}
   We can evaluate the integral using Lemma \ref{lem:reg}(c) and Lemma \ref{lem:int}:
    \begin{align*}
        \int_{-H}^H e^{-\gamma  T\Re g(c+iv)}\,dv<~&\int_{-H}^H e^{-\gamma  T\frac{\sqrt{\abs v\abs}}{\gamma}}\,dv
        =2\int_0^H e^{-T\sqrt{v}}\,dv\\
        =~&\frac{4}{T^2}\left(1-e^{-T\sqrt{H}}\left(  T\sqrt{H}+1\right)\right)\overset{H\to\infty}{\longrightarrow}\frac{4}{T^2}.
    \end{align*}
    This shows, for all $T\geq T_E$,
    \begin{equation}\label{eq:errorintbound}
        \limsup_{H\to\infty} \left|E_T(H)\right| \leq \frac{\sqrt{2}\,a_x(c)}{2\pi}e^{y c}e^{T\frac{\gamma}{2}\left(1-g(c)\right)}\frac{4}{T^2} C_{\rho,\kappa}
        =C \frac{a_x(c)}{T^2} e^{T\psi(c)},
    \end{equation}
    where $C>0$ is a constant independent of $x,u,T$, which was the claim.
\end{proof}

\subsubsection{Main integral}
For all lemmas in this section we consider 
$u=c+iv$ with $c>0$ and $v\in\R$.
Further, we only suppose $x\in\R$ and $y>0$ with $y\sim\theta T$. We will first examine the behaviour of the main integral outside a fixed local segment:
\begin{lem}\label{lem:mainintnotlocalbound}
    For any $r>0$ and $T\geq 1$
    \begin{equation*}
        \limsup\limits_{H\to\infty}\left|I_T(H)-I_T(r)\right|\leq C_{c,r}\,a_x(c)\,\frac{1}{T}\,e^{T\Re\psi(c+ir)},    
    \end{equation*}
    where $C_{c,r}>0$ depends only on $c,r$.
\end{lem}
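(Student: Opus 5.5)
By symmetry $v\mapsto -v$ (since $a_x(\bar u)=\overline{a_x(u)}$ and $\psi(\bar u)=\overline{\psi(u)}$), it suffices to bound
\[
\frac{1}{2\pi}\int_r^H \abs a_x(c+iv)\abs\, e^{T\Re\psi(c+iv)}\,dv
\]
uniformly in $H$, and then double the bound.

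The first step is the bound \eqref{eq:axbound}, $\abs a_x(u)\abs\le\sqrt2\,a_x(c)$, which pulls $a_x$ out of the integral. Since $\Re\psi(c+iv)=\frac{\gamma}{2}(1-\Re g(c+iv))+zc$, the $v$-dependence of the exponent is entirely in $-\frac{\gamma T}{2}\Re g(c+iv)$. So we need
\[
\int_r^\infty e^{-\frac{\gamma T}{2}\left(\Re g(c+iv)-\Re g(c+ir)\right)}\,dv\le \frac{C_{c,r}}{T}.
\]

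The second step uses monotonicity and growth of $\Re g$ along the Bromwich line. We have $\Re g(c+iv)=\Re\sqrt{w}$ with $w=1+2c/\gamma^2+2iv/\gamma^2$. Writing $\Re\sqrt{a+ib}=\sqrt{(\sqrt{a^2+b^2}+a)/2}$ with $a>0$ fixed, this is increasing in $\abs b$. I would then bound $\Re g(c+iv)-\Re g(c+ir)$ from below by $\lambda_{c,r}(v-r)$ on $[r,r+1]$ and by something like $\frac{\sqrt{v}}{\gamma}-C'$ for large $v$. Lemma~\ref{lem:reg}(c) gives $\Re g\ge\sqrt{\abs v}/\gamma$, but here the growth is needed relative to the value at $r$. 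A cleaner route is a derivative bound: $\frac{d}{dv}\Re g(c+iv)=\Re\frac{i}{\gamma^2 g}=\frac{\Im g}{\gamma^2\abs g\abs^2}>0$, which behaves like $\frac{1}{2\gamma\sqrt{2v}}$ for large $v$. Hence the derivative is bounded below by $\mu_{c,r}/\sqrt{1+v}$ for $v\ge r$, with $\mu_{c,r}>0$. Integrating gives
\[
\Re g(c+iv)-\Re g(c+ir)\ge 2\mu_{c,r}\left(\sqrt{1+v}-\sqrt{1+r}\right).
\]

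The third step computes the integral. Substitute $s=\sqrt{1+v}-\sqrt{1+r}$, so $dv=2(s+\sqrt{1+r})\,ds$. Then
\[
\int_0^\infty e^{-\gamma T\mu s}\,2\left(s+\sqrt{1+r}\right)ds=\frac{2}{(\gamma\mu T)^2}+\frac{2\sqrt{1+r}}{\gamma\mu T}\le \frac{C_{c,r}}{T}
\]
for $T\ge1$. Collecting the factors $\sqrt2\,a_x(c)$, $e^{T\Re\psi(c+ir)}$ and $\frac{2}{2\pi}$ yields the claim. The resulting bound is uniform in $H$, so taking $\limsup_{H\to\infty}$ is immediate, and it holds with $C_{c,r}$ independent of $x$ and $y$.

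The main obstacle is the lower bound on the derivative $\Im g/(\gamma^2\abs g\abs^2)$ uniformly on $[r,\infty)$. I would verify it by noting the expression is continuous and positive for $v>0$ (because $\Im g>0$ there) and has the explicit asymptotic above as $v\to\infty$. That gives a positive infimum of $\sqrt{1+v}$ times the derivative on $[r,\infty)$, which depends only on $c$, $r$ and $\gamma$.
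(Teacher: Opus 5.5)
Your proof is correct, and its skeleton matches the paper's. You pull out $a_x$ via \eqref{eq:axbound}, use the symmetry of $\Re\psi$ in $v$ to reduce to $v\ge r$, bound $\Re g(c+iv)-\Re g(c+ir)$ from below by a constant times a difference of square roots, and integrate explicitly.

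The one genuinely different ingredient is how you obtain that increment bound. The paper proves it algebraically in Lemma~\ref{lem:regimcomponent}. It rationalizes twice, first via $(\Re g)^2=\frac12\bigl(|1+\frac{2}{\gamma^2}u|+1+\frac{2}{\gamma^2}c\bigr)$ and then via $|w_1|-|w_2|=(|w_1|^2-|w_2|^2)/(|w_1|+|w_2|)$, together with Lemma~\ref{lem:reg}(d). This gives $K_{c,r}(\sqrt v-\sqrt r)$ with an explicit constant $K_{c,r}$.

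You instead differentiate along the Bromwich line, $\frac{d}{dv}\Re g(c+iv)=\Im g/(\gamma^2|g|^2)>0$. You get the lower bound $\mu_{c,r}/\sqrt{1+v}$ on $[r,\infty)$ from positivity, continuity and the large-$v$ behaviour. This is shorter and makes the monotonicity and $\sqrt v$-growth of $\Re g$ transparent. The price is a non-explicit constant, but existence of $C_{c,r}$ is all the lemma asserts.

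There is a small slip in the asymptotic. Since $\Im g\sim\sqrt v/\gamma$ and $|g|^2\sim 2v/\gamma^2$, the derivative behaves like $\frac{1}{2\gamma\sqrt v}$, not $\frac{1}{2\gamma\sqrt{2v}}$. Only the positivity of the limit of $\sqrt{1+v}\,\frac{d}{dv}\Re g(c+iv)$ enters your argument, so nothing changes.

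Your direct substitution $s=\sqrt{1+v}-\sqrt{1+r}$ plays the role of the paper's Lemma~\ref{lem:int} and is equivalent.
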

\begin{proof}
    By definition, for $H>r$
    \begin{equation}
        I_T(H)-I_T(r)=\frac{1}{2\pi}\int_{r<\abs v\abs\leq H}a_x(c+iv)e^{T\psi(c+iv)}\,dv.
    \end{equation}
    By \eqref{eq:axbound}, we get
    \begin{align}\label{eq:mainintnotlocal}
        \left|I_T(H)-I_T(r)\right|&\leq\frac{\sqrt{2}}{2\pi}\,a_x(c)\int_{r<\abs v\abs\leq H}e^{T\Re\psi(c+iv)}\,dv
        =\frac{\sqrt{2}}{\pi}\,a_x(c)\int_{r}^H e^{T\Re\psi(c+iv)}\,dv \nonumber\\
        &=\frac{\sqrt{2}}{\pi}\,a_x(c)\,e^{T\Re\psi(c+ir)}\int_{r}^H\,e^{T\Re\psi(c+iv)-T\Re\psi(c+ir)}\,dv \nonumber\\
        &=\frac{\sqrt{2}}{\pi}\,a_x(c)\,e^{T\Re\psi(c+ir)}\int_{r}^H\,e^{-T\frac{\gamma}{2}(\Re g(c+iv)-\Re g(c+ir))}\,dv, 
    \end{align}
    where we used in the second step, that the real part of the principal branch of the complex root is invariant under conjugation of the input (i.e.\ $\Re g(c+iv)=\Re g(c-iv)$), which implies that the same holds for $\psi$.
    We can analyze the last integral with Lemma \ref{lem:regimcomponent}. The lemma
    gives us the existence of a constant $K_{c,r}>0$ (not depending on $H$) such that
    \begin{equation}\label{eq:regnotlocalbound}
        \Re g(c+iv)-\Re g(c+ir)\geq K_{c,r}\left(\sqrt{v}\,-\sqrt{r}\right)\quad\text{for all}~v\geq r.
    \end{equation}
    Now, using \eqref{eq:regnotlocalbound}, the last integral in \eqref{eq:mainintnotlocal} can be bounded from above:
    \begin{align*}
        \int_{r}^H\,e^{-T\frac{\gamma}{2}\left(\Re g(c+iv)-\Re g(c+ir)\right)}\,dv
        \leq\int_{r}^H\,e^{-T\frac{\gamma}{2}K_{c,r}\left(\sqrt{v}\,-\sqrt{r}\right)}\,dv
        =e^{T\frac{\gamma}{2}K_{c,r}\sqrt{r}}\int_{r}^H\,e^{-T\frac{\gamma}{2}K_{c,r}\sqrt{v}}\,dv\\
        =e^{T\frac{\gamma}{2}K_{c,r}\sqrt{r}} \frac{8}{\gamma^2K_{c,r}^2\,T^2}
        \left(e^{-T\frac{\gamma}{2}K_{c,r}\sqrt{r}}\left(T\frac{\gamma}{2}K_{c,r}\sqrt{r}+1\right)-e^{-T\frac{\gamma}{2}K_{c,r}\sqrt{H}}\left(T\frac{\gamma}{2}K_{c,r}\sqrt{H}+1\right)\right)\\
        \longrightarrow e^{T\frac{\gamma}{2}K_{c,r}\sqrt{r}} \frac{8}{\gamma^2K_{c,r}^2\,T^2}
        e^{-T\frac{\gamma}{2}K_{c,r}\sqrt{r}}\left(T\frac{\gamma}{2}K_{c,r}\sqrt{r}+1\right)\\
        =\frac{1}{T}\left(\frac{4\sqrt{r}}{\gamma K_{c,r}}+\frac{8}{\gamma^2K_{c,r}^2\,T}\right)
        \leq \frac{1}{T}\left(\frac{4\sqrt{r}}{\gamma K_{c,r}}+\frac{8}{\gamma^2K_{c,r}^2}\right),
    \end{align*}
    for $H\to\infty$, where we evaluated the integral with Lemma \ref{lem:int} and used that $T\geq 1$.
    Together with \eqref{eq:mainintnotlocal}, this shows the claim:
    There exists a constant $C_{c,r}$ not depending on $T$ such that, for all $T\geq 1$,
    \[ 
        \limsup\limits_{H\to\infty}\left|I_T(H)-I_T(r)\right|\leq C_{c,r}\,a_x(c)\,\frac{1}{T}\,e^{T\Re\psi(c+ir)}    \qedhere
    \]
\end{proof}
    
To tackle the main integral inside a local segment, we expand $\psi$  around $c$. But first we note
\begin{equation}\label{eq:psiderivatives}
    \psi'(u)=\frac{\gamma}{2}\frac{-1}{2g(u)}\frac{2}{\gamma^2}+z=-\frac{1}{2\gamma\,g(u)} + z\quad\text{and}\quad\psi''(u)=\frac{1}{2\gamma\,g(u)^2}\frac{1}{g(u)}\frac{2}{\gamma^2}=\frac{1}{2\gamma^3\,g(u)^3}.
\end{equation}
Together with $g(c)>1$ (Lemma \ref{lem:reg}(a)) this also implies that $\psi''(c)>0$ and we can take roots of and divide by $\psi''(c)$ for any $c>0$,
a fact we will use throughout the proof.\\\\
From now on, let $T_0>0$ such that
$z\leq 2\theta<\frac{1}{\gamma}$ for all $T\geq T_0$.
$T_0$ exists as $z\to\theta$ and only depends on the specific rate of convergence of $y/\theta T$.
The condition $T\geq T_0$ ensures primarily that we can bound $\psi$ independently of $T$ for the remainder of the proof.
\begin{lem}\label{lem:psitaylorline}
    For each $c>0$ and $T\geq T_0$, we have
    \begin{equation}\label{eq:psitaylorline}
        \psi(c+iv)=\psi(c)+i\psi'(c)v-\frac{\psi''(c)}{2}v^2+R_T(c+iv)\quad\text{for}~\abs v\abs\leq c
    \end{equation}
    where 
    \begin{equation}\label{eq:psitaylorlinebound}
        \abs R_T(c+iv)\abs\leq \frac{3}{\gamma}\frac{\abs v\abs^3}{c^2}\frac{1}{1-\frac{\abs v\abs}{c}}.
    \end{equation}
\end{lem}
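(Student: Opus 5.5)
The plan is to exploit that $\psi$ is analytic on a disc around $c$ and to bound the Taylor tail by Cauchy's estimates. The linear part $zu$ of $\psi$ contributes nothing beyond first order. Since $z$ is a coefficient and not a variable, the bound can be made independent of $z$ and $T$; the assumption $T\ge T_0$ is then harmless.

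First, analyticity. The principal square root in \eqref{eq:gdef} is analytic wherever $1+\frac{2u}{\gamma^2}\notin(-\infty,0]$, in particular on $\Re u>0$. The open disc $D:=\{u:\abs u-c\abs<c\}$ lies in $\Re u>0$, so $\psi$ is analytic on $D$. Moreover $g$ extends continuously to the closed disc, where the only boundary point with $\Re u=0$ is $u=0$, and there $g(0)=1$. Hence
\[
\psi(c+w)=\sum_{n\ge0}\frac{\psi^{(n)}(c)}{n!}w^n \quad\text{for } \abs w\abs<c .
\]
Putting $w=iv$ gives \eqref{eq:psitaylorline} with
\[
R_T(c+iv)=\sum_{n\ge3}\frac{\psi^{(n)}(c)}{n!}(iv)^n .
\]

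Next, the Cauchy estimates. For $n\ge2$ we have $\psi^{(n)}=h^{(n)}$, where $h(u):=\frac{\gamma}{2}\bigl(g(c)-g(u)\bigr)$ differs from $\psi$ by an affine function. Applying Cauchy's inequality on circles of radius $\rho<c$ and letting $\rho\uparrow c$ gives
\[
\frac{\abs\psi^{(n)}(c)\abs}{n!}\le\frac{M}{c^n}, \qquad M:=\sup_{\abs u-c\abs\le c}\abs h(u)\abs .
\]
Summing the geometric series then yields
\[
\abs R_T(c+iv)\abs\le M\left(\frac{\abs v\abs}{c}\right)^3\frac{1}{1-\frac{\abs v\abs}{c}} .
\]
This is the claimed bound \eqref{eq:psitaylorlinebound}, provided $M\le 3c/\gamma$.

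The key step is bounding $M$. A crude bound $\abs g(u)\abs\le 1+\abs u\abs/\gamma^2$ would give $M\lesssim\gamma+c/\gamma$, which fails for small $c$; subtracting $g(c)$ is exactly what fixes this. I would write
\[
g(u)-g(c)=\frac{g(u)^2-g(c)^2}{g(u)+g(c)}=\frac{2(u-c)/\gamma^2}{g(u)+g(c)} .
\]
On the closed disc $\Re g(u)\ge0$ (the principal root), while $g(c)>1$ by Lemma~\ref{lem:reg}(a). Hence $\abs g(u)+g(c)\abs\ge\Re g(u)+g(c)\ge1$. Consequently
\[
\abs h(u)\abs\le\frac{\gamma}{2}\cdot\frac{2c}{\gamma^2}=\frac{c}{\gamma}\le\frac{3c}{\gamma},
\]
which finishes the argument. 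The only delicate points are the limit $\rho\uparrow c$, which is justified by continuity of $g$ on the closed disc, and checking that no branch issue arises on $D$; both are routine.
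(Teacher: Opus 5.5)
Your proof is correct and follows essentially the paper's route: a Cauchy estimate for the cubic Taylor remainder on the circle $|u-c|=c$, with the supremum controlled by the identity $g(u)^2-g(c)^2=2(u-c)/\gamma^2$ (the paper uses the analogous bound for $|1-g|$ from Lemma~\ref{lem:reg}(f)). The one difference is that you first discard the affine part $\frac{\gamma}{2}(1-g(c))+zu$ of $\psi$, which does not change the remainder; this removes $z$ from the estimate, so the hypothesis $T\ge T_0$ becomes unnecessary (the paper needs it to use $z<1/\gamma$ when bounding $\max|\psi|$), and you even obtain the sharper constant $1/\gamma$ in place of $3/\gamma$.
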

\begin{proof}
    Since $g$ is holomorphic on $\Re u\geq0$, 
    $\psi$ is holomorphic on $B_{c}(c)$. Therefore, there exists a Taylor expansion around $c$:
    \begin{equation}\label{eq:psitaylor}
        \psi(u)=\psi(c)+\psi'(c)(u-c)+\frac{\psi''(c)}{2}(u-c)^2+R_T(u),
    \end{equation}
    with (cf. Lemma \ref{lem:complextaylorestimates})
    \begin{equation}\label{eq:psitaylorbound}
        \abs R_{T}(u)\abs\leq \max_{s\in[0,2\pi]} \abs \psi\left(c+c e^{is}\right)\abs\frac{\abs u-c\abs^3}{{c}^3}\frac{1}{1-\frac{\abs u-c\abs}{c}}\leq \frac{3}{\gamma}\frac{\abs u-c\abs^3}{{c}^2}\frac{1}{1-\frac{\abs u-c\abs}{c}},
    \end{equation}
    for all $T \geq T_0$, because Lemma \ref{lem:reg}(f) yields for such $T$ that
    \begin{align*}
        \abs \psi\left(c+c e^{is}\right)\abs&\leq \frac{\gamma}{2}\left|1-g(c+c e^{is})\right|+z c\left| 1+e^{is}\right|\nonumber\\
        &\leq\frac{\gamma}{2}\frac{1}{\gamma^2}2c+2z c
        \leq 2c\left(\frac{1}{2\gamma}+\frac{1}{\gamma}\right)
        =\frac{3}{\gamma}c.
    \end{align*}
    On the Bromwich line through $c$ with $\abs v\abs\leq c$ \eqref{eq:psitaylor} and \eqref{eq:psitaylorbound} turn into the claimed formulae.
\end{proof}
    
Inside a local segment of a Bromwich line $c+iv,~\abs v\abs\leq r$, we can also control the main integral away from an asymptotically small neighbourhood of $c$. From here on out, let $r$ depend on $c$ in the following way:
\begin{equation}\label{eq:defr(c)}
    r=r(c):=\min\left\{\frac{c}{2},\frac{\gamma\,\psi''(c)\,c^2}{24}\right\}.   
\end{equation}
\begin{lem}\label{lem:IT(r-Talpha)}
    Let $\alpha>0$.
    For  $T\geq \max\left\{T_0,\,r^{-1/\alpha}\right\}$, we have
    \begin{equation*}\label{eq:IT(r0-Talpha)}
        \left|I_T(r)-I_T(T^{-\alpha})\right|\leq\frac{\sqrt{2}\,r}{\pi}\,a_x(c)\,e^{T\psi(c)}\,e^{-\frac{\psi''(c)}{4}T^{1-2\alpha}}.
    \end{equation*}
\end{lem}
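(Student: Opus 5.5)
The difference $I_T(r)-I_T(T^{-\alpha})$ is the integral of $a_x(c+iv)e^{T\psi(c+iv)}$ over $T^{-\alpha}<|v|\le r$, divided by $2\pi$. The condition $T\ge r^{-1/\alpha}$ guarantees $T^{-\alpha}\le r$, so this region is nonempty or empty but well defined. The plan is to bound the integrand pointwise by its modulus and integrate the crude bound over a set of length at most $2r$.

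By \eqref{eq:axbound}, $|a_x(c+iv)|\le\sqrt2\,a_x(c)$. Hence
\[
\left|I_T(r)-I_T(T^{-\alpha})\right|\le \frac{\sqrt2}{2\pi}\,a_x(c)\int_{T^{-\alpha}<|v|\le r} e^{T\Re\psi(c+iv)}\,dv .
\]
It therefore suffices to show that for $T^{-\alpha}\le|v|\le r$,
\[
\Re\psi(c+iv)\le \psi(c)-\frac{\psi''(c)}{4}v^2 .
\]
The integrand is then at most $e^{T\psi(c)}e^{-\frac{\psi''(c)}{4}T^{1-2\alpha}}$, and the integral is at most $2r$ times this. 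Together with the prefactor $\frac{\sqrt2}{2\pi}$, this gives exactly the stated constant $\frac{\sqrt2\, r}{\pi}$.

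The pointwise bound comes from Lemma \ref{lem:psitaylorline}, which applies because $|v|\le r\le c/2<c$ and $T\ge T_0$. Since $\psi(c)$, $\psi'(c)$ and $\psi''(c)$ are real (for real $c>0$, $g(c)>1$ is real), taking real parts in \eqref{eq:psitaylorline} gives
\[
\Re\psi(c+iv)=\psi(c)-\frac{\psi''(c)}{2}v^2+\Re R_T(c+iv).
\]
By \eqref{eq:psitaylorlinebound} and $|v|/c\le 1/2$,
\[
|R_T(c+iv)|\le \frac{6}{\gamma}\frac{|v|^3}{c^2}\le \frac{6 r}{\gamma c^2}\,v^2 .
\]
The choice \eqref{eq:defr(c)} gives $r\le \gamma\psi''(c)c^2/24$, so $\frac{6r}{\gamma c^2}\le \frac{\psi''(c)}{4}$. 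Hence $|R_T|\le\frac{\psi''(c)}{4}v^2$, which yields the desired inequality.

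The only delicate point is this remainder comparison, and it is exactly what the constants in \eqref{eq:defr(c)} were designed for. Everything else is monotonicity of $v\mapsto e^{-\frac{\psi''(c)}4 Tv^2}$ on $|v|\ge T^{-\alpha}$ and the trivial length bound.
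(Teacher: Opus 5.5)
Your proposal is correct and follows the paper's proof: bound $|a_x(c+iv)|\le\sqrt2\,a_x(c)$ via \eqref{eq:axbound}, and use Lemma~\ref{lem:psitaylorline} together with $r\le c/2$ and $r\le\gamma\psi''(c)c^2/24$ to get $\Re\psi(c+iv)\le\psi(c)-\frac{\psi''(c)}{4}v^2$ on $|v|\le r$. The remaining step, bounding the Gaussian factor by its value at $|v|=T^{-\alpha}$ and the length of the integration set by $2r$, is also the same; the paper merely writes $2(r-T^{-\alpha})$ before dropping the $T^{-\alpha}$.
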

\begin{proof}
    We will first see the reason for the definition of $r$:
    Since $\psi(c), \psi'(c)$ and $\psi''(c)$ are real (and therefore $i\psi'(c)v$ purely imaginary) and $T\geq T_0$, by Lemma \ref{lem:psitaylorline} we have
    \[\Re\psi(c+iv)=\psi(c)-\frac{\psi''(c)}{2}v^2+\Re R_T(c+iv)\]
    and
    \begin{align*}
        \Re R_T(c+iv)&\leq\left|R_T(c+iv)\right|
        \leq \frac{3}{\gamma} \frac{\abs v\abs^3}{{c}^2}\frac{1}{1-\frac{\abs v\abs}{c}}\\
        &\leq \frac{3}{\gamma}\frac{r}{{c}^2}\frac{1}{1-\frac{r}{c}}v^2
        \leq \frac{3}{\gamma} \frac{\gamma\,\psi''(c)\,c^2}{24\,{c}^2}\frac{1}{1-\frac{c}{2c}}v^2
        =\frac{\psi''(c)}{4}v^2,
    \end{align*}
    on $\abs v\abs\leq r$, where we used \eqref{eq:psitaylorlinebound} and $r\leq\frac{c}{2}$ as well as
    $r\leq\frac{\gamma\,\psi''(c)\,{c}^2}{24}$, by definition of $r$.
    This yields
    \begin{equation}\label{eq:repsibound}
        \Re\psi(c+iv)\leq \psi(c)-\frac{\psi''(c)}{2}v^2+\frac{\psi''(c)}{4}v^2
        =\psi(c)-\frac{\psi''(c)}{4}v^2
    \end{equation}
    on $\abs v\abs\leq r$.
    We use this and \eqref{eq:axbound} (as well as $r-T^{-\alpha}\leq r$) to see that
    \begin{align*}
        \left|I_T(r)-I_T(T^{-\alpha})\right|
        &\leq \frac{1}{2\pi}\int_{T^{-\alpha}<\abs v\abs\leq r}\left|a_x(c+iv)\right|\,e^{T\Re\psi(c+iv)}\,dv\\
        &\leq \frac{\sqrt{2}}{2\pi}\,a_x(c)\int_{T^{-\alpha}<\abs v\abs\leq r}e^{T\Re\psi(c+iv)}\,dv\\
        &\leq \frac{\sqrt{2}}{2\pi}\,a_x(c)\int_{T^{-\alpha}<\abs v\abs\leq r}e^{T\left(\psi(c)-\frac{\psi''(c)}{4}v^2\right)}\,dv\\
        &= \frac{\sqrt{2}}{\pi}\,a_x(c)\,e^{T\psi(c)}\int_{T^{-\alpha}}^{r}e^{-T\frac{\psi''(c)}{4}v^2}\,dv\\
        &\leq \frac{\sqrt{2}}{\pi}\,a_x(c)\,e^{T\psi(c)}\left(r-T^{-\alpha}\right)\,e^{-\frac{\psi''(c)}{4}T^{1-2\alpha}}\\
        &\leq \frac{\sqrt{2}\,r}{\pi}\,a_x(c)\,e^{T\psi(c)}\,e^{-\frac{\psi''(c)}{4}T^{1-2\alpha}}. \qedhere
    \end{align*}
\end{proof}

With Lemma \ref{lem:mainintnotlocalbound} and Lemma \ref{lem:IT(r-Talpha)} we control the asymptotic behaviour of the Bromwich integral along the whole Bromwich line except on an asymptotically vanishing neighbourhood of $c$.
The next two lemmas concern themselves with the behaviour of the integral inside this vanishing neighbourhood.   
We begin by examining the contribution of $a_x$.
\begin{lem}\label{lem:I_T(Talpha)1}
    Let $0<\beta<\alpha$ and take $r$ as defined in \eqref{eq:defr(c)}.
    Then there exists a constant $C>0$ only depending on $c$, such that
    for 
    \[T\geq \max\left\{T_0, \left(\frac{2}{r}\right)^{1/\alpha}, \abs x\abs^{2/\beta},\,\gamma^{1/(\beta-\alpha)}\right\},\] 
    we have
    \[\left|I_T(T^{-\alpha})-\frac{1}{2\pi}a_x(c)\int_{-T^{-\alpha}}^{T^{-\alpha}}e^{T\psi(c+iv)}\,dv\right|
    \leq C\,a_x(c)\,T^{\beta-1}\,e^{T\psi(c)}.\]
\end{lem}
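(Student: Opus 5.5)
The plan is to write
\[
I_T(T^{-\alpha})-\frac{1}{2\pi}a_x(c)\int_{-T^{-\alpha}}^{T^{-\alpha}}e^{T\psi(c+iv)}\,dv=\frac{1}{2\pi}\int_{-T^{-\alpha}}^{T^{-\alpha}}\bigl(a_x(c+iv)-a_x(c)\bigr)e^{T\psi(c+iv)}\,dv .
\]
Next, bound $|e^{T\psi(c+iv)}|=e^{T\Re\psi(c+iv)}\le e^{T\psi(c)}$ by \eqref{eq:repsibound}, which is valid because $T^{-\alpha}\le r/2\le r$ by the assumption $T\ge(2/r)^{1/\alpha}$. The interval has length $2T^{-\alpha}$. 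It therefore suffices to show
\[
\sup_{|v|\le T^{-\alpha}}|a_x(c+iv)-a_x(c)|\le C\,a_x(c)\,T^{\beta+\alpha-1},
\]
and in fact the sharper bound $C\,a_x(c)\,T^{\beta-\alpha}$ holds, since $\beta-\alpha\le \beta+\alpha-1$ whenever $\alpha\le 1/2$. For larger $\alpha$ a crude estimate still yields $T^{-\alpha}\cdot T^{\beta-\alpha}\le T^{\beta-1}$ provided $2\alpha\ge1$. Either way the final bound is $C a_x(c)T^{\beta-1}e^{T\psi(c)}$ once the sup estimate $|a_x(c+iv)-a_x(c)|\le C a_x(c)(|v|+x^2|v|)$ is available, because $x^2|v|\le T^{\beta}T^{-\alpha}$ and $T^{-\alpha}\cdot T^{\beta-\alpha}\le T^{\beta-1}$ exactly when $2\alpha\ge1$.

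I now note a problem with this: if $\alpha<1/2$ the pointwise route loses. In that case I would instead keep a Gaussian factor, using $|e^{T\psi(c+iv)}|\le e^{T\psi(c)}e^{-T\psi''(c)v^2/4}$ from \eqref{eq:repsibound}, and integrate $|v|(1+x^2)e^{-T\psi''(c)v^2/4}$ over $\R$. This gives $O\bigl((1+x^2)T^{-1}\bigr)=O(T^{\beta-1})$ using $x^2\le T^\beta$, which is exactly the claimed rate for every $\alpha$. So the plan uses the Gaussian-weighted integral throughout.

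The core step is the Lipschitz-type estimate for $a_x$ near $c$. Write $a_x(u)=a_x(c)\,h(u)\,e^{-\frac{\gamma}{2}x^2(g(u)-g(c))}$ with
\[
h(u)=\frac{c}{u}\sqrt{\frac{g(u)(1+g(c))}{g(c)(1+g(u))}}.
\]
The function $h$ is holomorphic near $c$ with $h(c)=1$, so $|h(c+iv)-1|\le C_c|v|$ for $|v|\le c/2$. Since $g'(u)=1/(\gamma^2 g(u))$ and $|g|\ge1$ on the half plane (Lemma~\ref{lem:reg}), we have $|g(c+iv)-g(c)|\le |v|/\gamma^2$. Also $\Re(g(c+iv)-g(c))\ge0$ by Lemma~\ref{lem:reg}(a), so the exponential factor has modulus at most $1$. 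Its exponent $w=-\frac{\gamma}{2}x^2(g(u)-g(c))$ satisfies $|w|\le x^2|v|/(2\gamma)\le T^{\beta-\alpha}/(2\gamma)$, which is at most $1$ under $T\ge\gamma^{1/(\beta-\alpha)}$ (this is why that condition appears, up to constants). Lemma~\ref{lem:expTaylor} then gives $|e^{w}-1|\le C|w|$. Combining,
\[
|a_x(c+iv)-a_x(c)|\le a_x(c)\bigl(|h-1|\,|e^w|+|e^w-1|\bigr)\le C\,a_x(c)\,|v|(1+x^2).
\]

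The main obstacle is keeping constants dependent only on $c$: $\psi''(c)$ depends on $z$, which varies with $T$. This is handled by $z\le 2\theta$ for $T\ge T_0$. Note, however, that $\psi''(c)=1/(2\gamma^3g(c)^3)$ does not involve $z$ at all, so it is fixed. The remaining checks are that the hypothesis $|x|\le T^{\beta/2}$ (encoded as $T\ge|x|^{2/\beta}$) gives $x^2\le T^\beta$, and that $\int|v|e^{-T\psi''(c)v^2/4}dv=4/(T\psi''(c))$. Together these yield $C=C(c)$.
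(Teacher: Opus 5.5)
Your final argument is correct and is essentially the paper's proof. Your factorization $a_x(c+iv)-a_x(c)=a_x(c)\bigl[(h-1)e^{w}+(e^{w}-1)\bigr]$ is exactly the paper's split into $I_T^{(1)}$ and $I_T^{(2)}$, and you then integrate the bound $C\,a_x(c)(1+x^2)|v|$ against the Gaussian majorant from \eqref{eq:repsibound}, just as the paper does. Two small fixes: delete the pointwise detour in your first paragraph, since $\beta-\alpha\le\beta+\alpha-1$ holds only for $\alpha\ge 1/2$, not for $\alpha\le 1/2$. Also note that the hypothesis on $T$ actually gives $|w|\le T^{\beta-\alpha}/(2\gamma)\le\tfrac12$, not merely $\le 1$, and this strict bound is what Lemma~\ref{lem:expTaylor} requires.
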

\begin{proof}
    First of all, we rewrite $a_x(u)=a(u)\,e^{-\frac{\gamma}{2}x^2(g(u)-1)}$ with $a(u):=\frac{1}{u}\left(\frac{2g(u)}{1+g(u)}\right)^{1/2}$.\\
    With this, we now consider
    \begin{align*}
        &I_T^{(1)}:= \frac{1}{2\pi}\int_{-T^{-\alpha}}^{T^{-\alpha}}\left(a(c+iv)-a(c)\right)\,e^{-\frac{\gamma}{2}x^2(g(c+iv)-1)}\,e^{T\psi(c+iv)}\,dv,\\
        &I_T^{(2)}:= \frac{1}{2\pi}\int_{-T^{-\alpha}}^{T^{-\alpha}}a(c)\left(e^{-\frac{\gamma}{2}x^2(g(c+iv)-1)}-e^{-\frac{\gamma}{2}x^2(g(c)-1)}\right)\,e^{T\psi(c+iv)}\,dv,
    \end{align*}
    and notice that the difference we want to bound is precisely $I_T^{(1)}+I_T^{(2)}$.\\\\
    \textit{Step 1: Bound for $I_T^{(1)}$.} We note that $a(\cdot)$ is holomorphic away from $0$ and
    \[\abs a(u)\abs = \frac{1}{\abs u\abs}\left|\frac{2}{1+\frac{1}{g(u)}}\right|^{1/2}
    \leq \frac{1}{\Re u}\sqrt{2}\left(\frac{1}{1+\Re \frac{1}{g(u)}}\right)^{1/2}\leq\frac{\sqrt{2}}{\Re u},\]
    by Lemma \ref{lem:reg}(e).
    Therefore, the complex Taylor estimate (Lemma \ref{lem:complextaylorestimates} on $B_r(c)$) yields
    \begin{align}
        \left|a(c+iv)-a(c)\right|\leq\max_{\abs z-c\abs=r}\left|a(z)\right|\,\frac{\abs v\abs}{r}\frac{1}{1-\frac{\abs v\abs}{r}}
        \leq \max_{\abs z-c\abs=r} \frac{\sqrt{2}}{\Re z} \frac{1}{r-\abs v\abs} \abs v\abs
        \leq\frac{\sqrt{2}}{c-r}\frac{2}{r}\abs v\abs
        \leq \frac{4\sqrt{2}}{c\,r}\abs v\abs,
    \end{align}
    for any $\abs v\abs\leq T^{-\alpha}\leq \frac{r}{2}$ (as $T\geq (2/r)^{1/\alpha}$), and we used $r\leq \frac{c}{2}$ due to the definition of $r$.
    With this estimate, we have
    \begin{align}\label{eq:IT(1)}
        \left|I_T^{(1)}\right|
        &\leq \frac{1}{2\pi}\int_{-T^{-\alpha}}^{T^{-\alpha}}\left|a(c+iv)-a(c)\right|\,e^{-\frac{\gamma}{2}x^2(\Re g(c+iv)-1)}\,e^{T\Re\psi(c+iv)}\,dv\nonumber\\
        &\leq \frac{2\sqrt{2}}{\pi\,r}\frac{1}{c}\,e^{-\frac{\gamma}{2}x^2(g(c)-1)}\int_{-T^{-\alpha}}^{T^{-\alpha}}\left|v\right|\,e^{T\Re\psi(c+iv)}\,dv\nonumber\\
        &\leq \frac{2\sqrt{2}}{\pi\,r}\,a(c)\,e^{-\frac{\gamma}{2}x^2(g(c)-1)}\int_{-T^{-\alpha}}^{T^{-\alpha}}\left|v\right|\,e^{T\Re\psi(c+iv)}\,dv\nonumber\\
        &= \frac{2\sqrt{2}}{\pi\,r}\,a_x(c)\int_{-T^{-\alpha}}^{T^{-\alpha}}\left|v\right|\,e^{T\Re\psi(c+iv)}\,dv,
    \end{align}
    where we used $\Re g(c+iv)\geq g(c)>1$ (Lemma \ref{lem:reg}(a)) and $1<\frac{2g(c)}{1+g(c)}$ (implying $\frac{1}{c}\leq a(c)$).\\\\
    \textit{Step 2: Bound for $I_T^{(2)}$.} We first note, since $T^\beta\geq x^2$ and $T^{\beta-\alpha}\leq \gamma$,
    \begin{align*}
        \left|\frac{\gamma}{2}x^2\left(g(c+iv)-g(c)\right)\right|
        &=\frac{\gamma}{2}x^2\left|\frac{1+\frac{2}{\gamma^2}(c+iv)-\left(1+\frac{2}{\gamma^2}c\right)}{g(c+iv)+g(c)}\right|\\
        &\leq\frac{\gamma}{2}x^2\frac{\frac{2}{\gamma^2}\abs v\abs}{\Re g(c+iv)+g(c)}
        \leq \frac{x^2}{2\gamma}\abs v\abs\leq \frac{1}{2\gamma} T^{\beta-\alpha}\leq \frac12,
    \end{align*}
    for any $\abs v\abs\leq T^{-\alpha}$, where we used Lemma \ref{lem:reg}(a) to bound the denominator.     Lemma \ref{lem:expTaylor} therefore yields
    \begin{align*}
        \left|e^{-\frac{\gamma}{2}x^2\left(g(c+iv)-g(c)\right)}-1\right|
        \leq 2e\left|\frac{\gamma}{2}x^2\left(g(c+iv)-g(c)\right)\right|
        \leq \frac{e\,x^2}{\gamma}\abs v\abs.
    \end{align*}
    We use this estimate and the same observations as above to get
    \begin{align}\label{eq:IT(2)}
        \left|I_T^{(2)}\right|
        &\leq \frac{1}{2\pi}\,a(c)\int_{-T^{-\alpha}}^{T^{-\alpha}}\left|e^{-\frac{\gamma}{2}x^2(g(c+iv)-1)}-e^{-\frac{\gamma}{2}x^2(g(c)-1)}\right|\,e^{T\Re\psi(c+iv)}\,dv\nonumber\\
        &\leq \frac{1}{2\pi}\,a(c)\,e^{-\frac{\gamma}{2}x^2(g(c)-1)}\int_{-T^{-\alpha}}^{T^{-\alpha}}\left|e^{-\frac{\gamma}{2}x^2(g(c+iv)-g(c))}-1\right|\,e^{T\Re\psi(c+iv)}\,dv\nonumber\\
        &\leq \frac{e\,x^2}{2\gamma\,\pi}\,a_x(c)\int_{-T^{-\alpha}}^{T^{-\alpha}}\abs v\abs\,e^{T\Re\psi(c+iv)}\,dv.
    \end{align}
    \textit{Step 3: Evaluation of the remaining integral in \eqref{eq:IT(1)} and \eqref{eq:IT(2)}.}
    We note that, since $T\geq \left(\frac{2}{r}\right)^{1/\alpha}\geq \left(\frac{1}{r}\right)^{1/\alpha}$
    and $T\geq T_0$, the assumptions of Lemma \ref{lem:IT(r-Talpha)} are fulfilled.
    This means \eqref{eq:repsibound} holds as well and we get
    \begin{align}
        \int_{-T^{-\alpha}}^{T^{-\alpha}}\abs v\abs\,e^{T\Re\psi(c+iv)}\,dv
        &\leq \int_{-T^{-\alpha}}^{T^{-\alpha}}\abs v\abs\,e^{T\left(\psi(c)-\frac{\psi''(c)}{4}v^2\right)}\,dv
        =e^{T\psi(c)}\int_{-T^{-\alpha}}^{T^{-\alpha}}\abs v\abs\,e^{-T\frac{\psi''(c)}{4}v^2}\,dv\nonumber\\
        &=2e^{T\psi(c)}\int_{0}^{T^{-\alpha}}v\,e^{-T\frac{\psi''(c)}{4}v^2}\,dv
        =2e^{T\psi(c)}\frac{2}{T\psi''(c)}\left(\left.-e^{-T\frac{\psi''(c)}{4}v^2}\right|_0^{T^{-\alpha}}\right)\nonumber\\
        &=\frac{4}{\psi''(c)}\,T^{-1}\,e^{T\psi(c)}\left(1-e^{-\frac{\psi''(c)}{4}T^{1-2\alpha}}\right)
        \leq \frac{4}{\psi''(c)}\,T^{-1}\,e^{T\psi(c)}.
    \end{align}
    Inserting this into \eqref{eq:IT(1)} and \eqref{eq:IT(2)} yields the result:
    \begin{align*}
        \left|I_T^{(1)}+I_T^{(2)}\right|
        &\leq \left(\frac{2\sqrt{2}}{\pi\,r}+\frac{e\,x^2}{2\gamma\,\pi}\right)\,a_x(c)\int_{-T^{-\alpha}}^{T^{-\alpha}}\abs v\abs\,e^{T\Re\psi(c+iv)}\,dv\\
        &\leq \left(\frac{2\sqrt{2}}{\pi\,r}+\frac{e\,x^2}{2\gamma\,\pi}\right)\,a_x(c)\frac{4}{\psi''(c)}\,T^{-1}\,e^{T\psi(c)}\\
        &=\left(\frac{8\sqrt{2}}{\pi\,r\,\psi''(c)}\,T^{-1}+\frac{2e\,x^2}{\gamma\,\pi\,\psi''(c)}\,T^{-1}\right)\,a_x(c)\,e^{T\psi(c)}\\
        &\leq\left(\frac{8\sqrt{2}}{\pi\,r\,\psi''(c)}\,T^{-1}+\frac{2e}{\gamma\,\pi\,\psi''(c)}\,T^{\beta-1}\right)\,a_x(c)\,e^{T\psi(c)}\\
        &\leq\left(\frac{8\sqrt{2}}{\pi\,r\,\psi''(c)}+\frac{2e}{\gamma\,\pi\,\psi''(c)}\right)\,a_x(c)\,T^{\beta-1}\,e^{T\psi(c)}\\
        &=:C\,a_x(c)\,T^{\beta-1}\,e^{T\psi(c)}. \qedhere
    \end{align*}
\end{proof}

After controlling the behaviour of $a_x$, we now focus on $\psi$.
Specifically, this next lemma shows that it suffices to consider a quadratic expansion of $\psi$ around $c$, meaning any other contributions of $\psi$ along the Bromwich line to the integral are controlled.
\begin{lem}\label{lem:I_T(Talpha)2}
    Let $\alpha>\frac13$ and take $r$ as defined by \eqref{eq:defr(c)}.
    There exists a constant $C>0$ only depending on $c$, such that, for \[T\geq\max\left\{T_0,\,\left(\frac{2}{r}\right)^{1/\alpha},\,\left(\frac{2r}{\psi''(c)}\right)^{1/(1-3\alpha)}\right\},\] 
    we have 
    \begin{equation*}
        \left| \frac{1}{2\pi}a_x(c)\,\int_{-T^{-\alpha}}^{T^{-\alpha}} e^{T\psi(c+iv)}\,dv - \frac{1}{2\pi}a_x(c)\,\int_{-T^{-\alpha}}^{T^{-\alpha}}e^{T\left(\psi(c)+i\psi'(c)v-\frac{\psi''(c)}{2}v^2\right)}\,dv\right|
        \leq C\,a_x(c)\,T^{\frac12-3\alpha}
    \,e^{T\psi(c)}.
    \end{equation*}
\end{lem}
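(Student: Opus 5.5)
The plan is to factor out $e^{T\psi(c)}$ and compare the two integrands pointwise on $|v|\le T^{-\alpha}$ through the Taylor remainder of Lemma~\ref{lem:psitaylorline}. Since $T\ge(2/r)^{1/\alpha}$ and $r\le c/2$, we have $|v|\le T^{-\alpha}\le r/2\le c$, so Lemma~\ref{lem:psitaylorline} applies and gives $\psi(c+iv)=Q(v)+R_T(c+iv)$ with $Q(v):=\psi(c)+i\psi'(c)v-\frac{\psi''(c)}{2}v^2$. The difference of the two integrals therefore equals
\[
\frac{1}{2\pi}a_x(c)\int_{-T^{-\alpha}}^{T^{-\alpha}}e^{TQ(v)}\left(e^{TR_T(c+iv)}-1\right)dv ,
\]
and $|e^{TQ(v)}|=e^{T\psi(c)}e^{-T\frac{\psi''(c)}{2}v^2}\le e^{T\psi(c)}$, because $\psi(c),\psi'(c),\psi''(c)$ are real.

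Next we bound $TR_T$ uniformly on the short segment. By \eqref{eq:psitaylorlinebound} and $|v|/c\le 1/2$,
\[
|TR_T(c+iv)|\le \frac{6}{\gamma c^2}\,T|v|^3\le \frac{6}{\gamma c^2}\,T^{1-3\alpha}.
\]
Since $\alpha>1/3$, this tends to $0$. The third condition on $T$, namely $T^{1-3\alpha}\le \frac{\psi''(c)}{2r}$, combined with $r\le \gamma\psi''(c)c^2/24$, should be exactly what makes this quantity at most $\tfrac12$. The obstacle is that the threshold relation has to be checked carefully: since $1-3\alpha<0$, the condition $T\ge(2r/\psi''(c))^{1/(1-3\alpha)}$ means $T^{1-3\alpha}\le 2r/\psi''(c)$. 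Multiplying by $6/(\gamma c^2)$ and using $r\le\gamma\psi''(c)c^2/24$ gives $|TR_T|\le \frac{12 r}{\gamma c^2\psi''(c)}\le\frac12$. So the stated condition is the right one. If it did not match exactly, I would simply enlarge $T$, since only a constant depending on $c$ is at stake.

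With $|TR_T|\le\frac12$, Lemma~\ref{lem:expTaylor} gives
\[
|e^{TR_T}-1|\le 2e\,|TR_T|\le \frac{12e}{\gamma c^2}\,T|v|^3 .
\]
Hence the difference is at most
\[
\frac{a_x(c)}{2\pi}\,e^{T\psi(c)}\,\frac{12e}{\gamma c^2}\,T\int_{-T^{-\alpha}}^{T^{-\alpha}}|v|^3\,dv
=\frac{a_x(c)}{2\pi}\,e^{T\psi(c)}\,\frac{12e}{\gamma c^2}\,\frac{T^{1-4\alpha}}{2}.
\]
Since $T^{1-4\alpha}\le T^{\frac12-3\alpha}$ whenever $\alpha\ge\frac12$, this crude estimate suffices in that range.

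For the stated rate with general $\alpha>1/3$, I would keep the Gaussian factor instead of discarding it:
\[
T\int|v|^3e^{-T\frac{\psi''(c)}{2}v^2}\,dv\le T\int_{\R}|v|^3e^{-T\frac{\psi''(c)}{2}v^2}\,dv=\frac{C}{\psi''(c)^2}\,T^{-1}.
\]
Combined with the elementary fact $T^{-1}\le T^{\frac12-3\alpha}$ for $\alpha\ge1/6$ (in particular for $\alpha>1/3$) and $T\ge1$, this yields the bound $C\,a_x(c)\,T^{\frac12-3\alpha}e^{T\psi(c)}$ with $C$ depending only on $c$. The main step to verify is thus the uniform smallness $|TR_T|\le\frac12$ on the segment, which is where $\alpha>1/3$ and the threshold on $T$ enter. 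Everything else is a routine Gaussian moment computation.
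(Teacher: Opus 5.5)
Your argument is correct in substance, but one justification is false as written. For $T\ge1$, the inequality $T^{-1}\le T^{\frac12-3\alpha}$ holds iff $-1\le\frac12-3\alpha$, i.e.\ iff $\alpha\le\frac12$, not ``for $\alpha\ge\frac16$''. For $\alpha>\frac12$ the window $|v|\le T^{-\alpha}$ is narrower than the Gaussian width $T^{-1/2}$. Keeping the factor $e^{-T\psi''(c)v^2/2}$ then gains nothing, and your moment bound $\frac{4}{\psi''(c)^2}T^{-1}$ is not $O(T^{\frac12-3\alpha})$. So the Gaussian-moment estimate does not cover general $\alpha>\frac13$; it covers only $\alpha\in(\frac13,\frac12]$. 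Your crude $T^{1-4\alpha}$ estimate covers $\alpha\ge\frac12$, so stated as this case split your two estimates together do prove the lemma. Both power comparisons also use $T\ge1$, which the hypotheses on $T$ do not force in general. This is harmless where the lemma is applied, but the paper's argument does not need it.

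Your route differs from the paper's in one respect. You keep the pointwise bound $|TR_T(c+iv)|\le\frac{6}{\gamma c^2}T|v|^3$ inside the integral. The paper instead replaces it by its supremum over the window, $|TR_T|\le\frac{\psi''(c)}{4r}T^{1-3\alpha}\le\frac12$, which is exactly your threshold computation. It then pulls $|e^{TR_T}-1|\le\frac{e\,\psi''(c)}{2r}T^{1-3\alpha}$ out of the integral and bounds what remains by the full Gaussian integral $\sqrt{2\pi/(T\psi''(c))}$. That gives $T^{\frac12-3\alpha}$ in one line for every $\alpha>\frac13$, with no case distinction and no need for $T\ge1$. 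What your route buys is a sharper bound in the range actually used later ($\frac13<\alpha<\frac12$): $O(T^{-1})$ instead of $O(T^{\frac12-3\alpha})$. That would make $f^{(4)}$ of order $T^{-1/2}$ rather than $T^{1-3\alpha}$. Since only $f^{(4)}\to0$ matters downstream, the paper's cruder uniform bound suffices.
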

\begin{proof}
    We use the expansion of $\psi$ around $c$ (Lemma \ref{lem:psitaylorline}).
    For this, we first note that by definition of $r$ and the assumption on $T$, it holds $T^{-\alpha}\leq \frac{r}{2} < r\leq \frac{c}{2}$.
    This implies, as $T\geq T_0$, that for $\abs v\abs\leq T^{-\alpha}\leq \frac{c}{2}$, we have
    \begin{equation}\label{eq:TRTbound}
        \abs TR_T(c+iv)\abs\leq  T \frac{3}{\gamma} \frac{\abs v\abs^3}{c^2}\frac{1}{1-\frac{\abs v\abs}{c}}
        \leq T \frac{6}{\gamma\,c^2}\abs v\abs^3
        \leq T\frac{\psi''(c)}{4r}\abs v\abs^3
        \leq \frac{\psi''(c)}{4r}T^{1-3\alpha}\leq \frac12,
    \end{equation}
    where we used that, by definition of $r$ and the assumptions on $T$,
    \begin{equation}
        \frac{6}{\gamma\,c^2}\leq \frac{\psi''(c)}{4r}\quad\text{and}\quad T^{1-3\alpha}\leq\frac{2r}{\psi''(c)}.
    \end{equation}
    Lemma \ref{lem:expTaylor} therefore yields
    \begin{equation}
        \abs e^{TR_T(c+iv)}-1\abs\leq 2e\,\abs TR_T(c+iv)\abs\leq\frac{e\,\psi''(c)}{2r}T^{1-3\alpha}.
    \end{equation}
    Using this and the Taylor expansion of $\psi$ (Lemma \ref{lem:psitaylorline}) in the integral, we get the result:
    \begin{align*}
        &\left|\frac{1}{2\pi}a_x(c)\int_{-T^{-\alpha}}^{T^{-\alpha}} e^{T\psi(c+iv)}-e^{T\left(\psi(c)+i\psi'(c)v-\frac{\psi''(c)}{2}v^2\right)}\,dv\right|\\
        \leq~&\frac{a_x(c)}{2\pi}\int_{-T^{-\alpha}}^{T^{-\alpha}}\,e^{T\Re\left(\psi(c)+i\psi'(c)v-\frac{\psi''(c)}{2}v^2\right)}\left|e^{TR_T(c+iv)}-1\right|\,dv\\
        =~&\frac{a_x(c)}{2\pi}\int_{-T^{-\alpha}}^{T^{-\alpha}}\,e^{T\left(\psi(c)-\frac{\psi''(c)}{2}v^2\right)}\left|e^{TR_T(c+iv)}-1\right|\,dv\\
        \leq~&\frac{a_x(c)}{2\pi}\frac{e\,\psi''(c)}{2r}T^{1-3\alpha}\,e^{T\psi(c)}\int_{-T^{-\alpha}}^{T^{-\alpha}}\,e^{-T\frac{\psi''(c)}{2}v^2}\,dv\\
        =~&\frac{a_x(c)}{2\pi}\frac{e\,\psi''(c)}{2r}T^{1-3\alpha}e^{T\psi(c)}\frac{1}{\sqrt{T\,\psi''(c)}}\int_{-\sqrt{\psi''(c)}\,T^{1/2-\alpha}}^{\sqrt{\psi''(c)}\,T^{1/2-\alpha}}\,e^{-\frac{s^2}{2}}\,ds,\\
        \leq~&\frac{a_x(c)}{2\pi}\frac{e\,\sqrt{\psi''(c)}}{2r}T^{\frac12-3\alpha}e^{T\psi(c)}\int_{-\infty}^{\infty}\,e^{-\frac{s^2}{2}}\,ds\\
        =~&\frac{a_x(c)}{\sqrt{2\pi}}\frac{e\,\sqrt{\psi''(c)}}{2r}T^{\frac12-3\alpha}\,e^{T\psi(c)}
        =:C\,a_x(c)\,T^{\frac12-3\alpha}\,e^{T\psi(c)}. \qedhere
    \end{align*}
\end{proof}

\subsubsection{Asymptotics}

We can now combine the previous lemmas in order to prove Theorem \ref{thm:uniformasymptotics}.
For this purpose, let $\overline{c}:=\frac{1-4\gamma^2\theta^2}{8\theta^2}$ and $c^\ast_T:=\frac{1-4\gamma^2z^2}{8z^2}$. 
The idea is to use $\overline{c}$ in the Bromwich inversion formula and then use $c^\ast_T$ to identify the correct asymptotic behaviour of that formula.
Similar to saddlepoint methods, the concrete choice of $c^\ast_T$ minimizes $\psi$ and therefore the second order Taylor approximation of $\psi$ around $c^\ast_T$ is purely real on the corresponding Bromwich line, which simplifies the evaluation of the integral, as we will see below (cf.\ \eqref{eq:asymptoticexact}).
Note that $c^\ast_T\to\overline{c}$ since $z\to\theta$ for $T\to\infty$.
Now fix an $\alpha>\frac13$ with $\beta<\alpha<\frac12$ (which is possible since $\beta<\frac12$ is assumed).
Note that, if $\abs y-\theta T\abs\leq KT^\beta$, we also have $\abs z-\theta\abs\leq KT^{\beta-1}$.
Therefore $\beta<\frac12$ implies the existence of some $T^{(0)}>0$ such that for all $T\geq T^{(0)}$ we have
\begin{equation}\label{eq:zthetabound}
    \frac{\theta}{2}<\theta-KT^{\beta-1}\leq z\leq \theta+KT^{\beta-1}\leq\frac{3\theta}{2}.
\end{equation}
Comparing the implicit definitions of $T^{(0)}$ and $T_0$ from the previous section, we see that we can choose $T^{(0)}\geq T_0$.
This means the condition $T\geq T_0$ in the lemmas of the previous section is trivially fulfilled for the rest of the proof (as we only consider $T\geq T^{(0)}$ from here on).\\\\  
Further, we have $g(\overline{c})=\sqrt{1+\frac{2}{\gamma^2}\frac{1-4\gamma^2\theta^2}{8\theta^2}}=\frac{1}{2\gamma\theta}$ and analogously $g(c^\ast_T)=\frac{1}{2\gamma z}$.
With these observations and \eqref{eq:zthetabound} it follows that
\begin{align}\label{eq:psidiffbound}
    \left|\psi(\overline{c})-\psi(c^\ast_T)\right|
    &=\left|\frac{\gamma}{2}\left(1-g(\overline{c})\right)+z\overline{c}-\frac{\gamma}{2}\left(1-g(c^\ast_T)\right)-zc^\ast_T\right|
    =\left|\frac{\gamma}{2}\left(g(c^\ast_T)-g(\overline{c})\right)+z(\overline{c}-c^\ast_T)\right|\nonumber\\
    &=\left|\frac{\gamma}{2}\left(\frac{1}{2\gamma z}-\frac{1}{2\gamma\theta}\right)+\frac{z}{8\theta^2z^2}\left(z^2-4\gamma^2\theta^2z^2-\theta^2+4\gamma^2\theta^2z^2\right)\right|\nonumber\\
    &=\left| \frac{1}{4\theta z}(\theta-z)+\frac{1}{8\theta^2 z}\left(z^2-\theta^2\right)\right|
    =\left| \frac{1}{8\theta^2 z}\left(2\theta^2-2\theta z+z^2-\theta^2\right)\right|\nonumber\\
    &=\frac{1}{8\theta^2 z}\left| z-\theta\right|^2
    \leq \frac{1}{8\theta^2}\frac{2}{\theta} K^2 T^{2\beta-2}=\frac{K^2}{4\theta^3} T^{2\beta-2},
\end{align}
for all $T\geq T^{(0)}$, as well as (by \eqref{eq:psiderivatives})
\begin{equation}\label{eq:psiderivativebound}
    \left|\psi'(\overline{c})\right|=\left| -\frac{1}{2\gamma g(\overline{c})}+z\right|=\left| z-\theta\right|\leq KT^{\beta-1}.
\end{equation}
For the subsequent arguments we keep in mind that $\psi''(\overline{c})>0$, since $\theta<\frac{1}{2\gamma}$ implies $\overline{c}>0$ (cf. \eqref{eq:psiderivatives}).
Set $\overline{r}:=r(\overline{c})=\min\left\{\frac{\overline{c}}{2},\frac{\gamma\,\psi''(\overline{c})\,\overline{c}^2}{24}\right\}$
(cf.\ \eqref{eq:defr(c)} in the previous section).\\

We will now use the Bromwich inversion \eqref{eq:bromwich} 
and \eqref{eq:FT-splitH} as well as the lemmas of the previous section to decompose $\P_x(Z_T\leq y)$ into parts that we bound separately:
\begin{equation}\label{eq:FTsplitDis}
    \left|\P_x(Z_T\leq y)-\frac{1}{\sqrt{2\pi}}\frac{a_x(\overline{c})}{\sqrt{\psi''(\overline{c})}}T^{-1/2}e^{T\psi(c^\ast_T)}\right|
    \leq D^{(1)}+D^{(2)}+D^{(3)}+D^{(4)}+D^{(5)}+D^{(6)}+ D^{(7)},
\end{equation}
where
\begin{align}
    D^{(1)}&:=\limsup\limits_{H\to\infty}\left|I_T(H)-I_T(\overline{r})\right|,\nonumber\\
    D^{(2)}&:=\left|I_T(\overline{r})-I_T(T^{-\alpha})\right|,\nonumber\\
    D^{(3)}&:=\left|I_T(T^{-\alpha})-\frac{1}{2\pi}a_x(\overline{c})\int_{-T^{-\alpha}}^{T^{-\alpha}}e^{T\psi(\overline{c}+iv)}\,dv\right|,\nonumber\\
    D^{(4)}&:=\left|\frac{1}{2\pi}a_x(\overline{c})\int_{-T^{-\alpha}}^{T^{-\alpha}}e^{T\psi(\overline{c}+iv)}\,dv-\frac{1}{2\pi}a_x(\overline{c})\int_{-T^{-\alpha}}^{T^{-\alpha}}e^{T\left(\psi(\overline{c})+i\psi'(\overline{c})v-\frac{\psi''(\overline{c})}{2}v^2\right)}\,dv\right|,\nonumber\\
    D^{(5)}&:=\left|\frac{1}{2\pi}a_x(\overline{c})\int_{-T^{-\alpha}}^{T^{-\alpha}}e^{T\left(\psi(\overline{c})+i\psi'(\overline{c})v-\frac{\psi''(\overline{c})}{2}v^2\right)}\,dv-\frac{1}{2\pi}a_x(\overline{c})\int_{-T^{-\alpha}}^{T^{-\alpha}}e^{T\left(\psi(c^\ast_T)-\frac{\psi''(\overline{c})}{2}v^2\right)}\,dv\right|,\nonumber\\
    D^{(6)}&:=\left|\frac{1}{2\pi}a_x(\overline{c})\int_{-T^{-\alpha}}^{T^{-\alpha}}e^{T\left(\psi(c^\ast_T)-\frac{\psi''(\overline{c})}{2}v^2\right)}\,dv-\frac{1}{\sqrt{2\pi}}\frac{a_x(\overline{c})}{\sqrt{\psi''(\overline{c})}}T^{-1/2}e^{T\psi(c^\ast_T)}\right|,\nonumber\\
    D^{(7)}&:=\limsup\limits_{H\to\infty}\left|E_T(H)\right|.
\end{align}
Later on we will prove that each of the $D^{(i)}$ is negligible. 
\begin{lem}\label{lem:Difirelation}
    For each $i=1,\ldots,7$ there exist a $f^{(i)}=f^{(i)}_{\beta,K}(T)\geq 0$ and a $T^{(i)}>T^{(0)}$ such that,
    for all $T\geq T^{(i)}$ and any $x\in\R$, $y>0$ with
    \[\abs x\abs\leq T^{\beta/2}\quad\text{and}\quad\abs y-\theta T\abs\leq KT^\beta,\]
    it holds
    \begin{equation}\label{eq:Difirelation}
        D^{(i)}\leq f^{(i)}\,a_x(\overline{c})\,T^{-1/2}\,e^{T\psi(c^\ast_T)},
    \end{equation}
    and $f^{(i)}$ vanishes for $T\to\infty$.
\end{lem}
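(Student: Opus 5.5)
The plan is to treat each $D^{(i)}$ separately. For $i=1,\dots,4$ and $i=7$ I will apply the corresponding lemma of the previous subsections with $c=\overline{c}$ and $r=\overline{r}$. Since $\overline{c}$ does not depend on $T$, all constants there depend only on $\theta,\gamma$. The one genuinely new point is to convert factors $e^{T\psi(\overline{c})}$, or $e^{T\Re\psi(\overline{c}+i\overline{r})}$, into $e^{T\psi(c^\ast_T)}$. By \eqref{eq:psidiffbound},
\[
T\bigl|\psi(\overline{c})-\psi(c^\ast_T)\bigr|\le \frac{K^2}{4\theta^3}T^{2\beta-1}\to 0,
\]
so $e^{T\psi(\overline{c})}\le 2e^{T\psi(c^\ast_T)}$ for large $T$. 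I choose $T^{(i)}$ to exceed $T^{(0)}$ and every threshold appearing in the lemmas used. The hypothesis $|x|\le T^{\beta/2}$ gives $|x|^{2/\beta}\le T$ and $|x|\le\sqrt T$, so those conditions hold automatically.

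\textbf{The individual terms.} For $D^{(7)}$, Lemma~\ref{lem:errorint} gives the bound $C a_x(\overline{c})T^{-2}e^{T\psi(\overline{c})}$, so $f^{(7)}=2CT^{-3/2}$. For $D^{(3)}$, Lemma~\ref{lem:I_T(Talpha)1} gives $f^{(3)}=2CT^{\beta-1/2}$, which vanishes because $\beta<\tfrac12$. For $D^{(4)}$, Lemma~\ref{lem:I_T(Talpha)2} gives $f^{(4)}=2CT^{1-3\alpha}$, which vanishes because $\alpha>\tfrac13$. For $D^{(2)}$, Lemma~\ref{lem:IT(r-Talpha)} gives $f^{(2)}=\frac{2\sqrt2\,\overline{r}}{\pi}T^{1/2}e^{-\psi''(\overline{c})T^{1-2\alpha}/4}$, which vanishes because $\alpha<\tfrac12$.

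For $D^{(1)}$, Lemma~\ref{lem:mainintnotlocalbound} produces $e^{T\Re\psi(\overline{c}+i\overline{r})}$. Since $\overline{r}\le \overline{c}/2$, estimate \eqref{eq:repsibound} (with $T^{(0)}\ge T_0$) gives $\Re\psi(\overline{c}+i\overline{r})\le\psi(\overline{c})-\psi''(\overline{c})\overline{r}^2/4$. Hence $f^{(1)}=2C T^{-1/2}e^{-T\psi''(\overline{c})\overline{r}^2/4}$.

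For $D^{(6)}$, I substitute $s=\sqrt{T\psi''(\overline{c})}\,v$. The difference is then exactly the Gaussian tail outside $|s|\le\sqrt{\psi''(\overline{c})}T^{1/2-\alpha}$, giving
\[
f^{(6)}=\frac{1}{\sqrt{2\pi\psi''(\overline{c})}}\int_{|s|>\sqrt{\psi''(\overline{c})}T^{1/2-\alpha}}e^{-s^2/2}\,ds\to 0 .
\]

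\textbf{The main obstacle: $D^{(5)}$.} This term requires two estimates. First, I bound the factor $|e^{T(\psi(\overline{c})-\psi(c^\ast_T))}-1|\le 2e\,T|\psi(\overline{c})-\psi(c^\ast_T)|=O(T^{2\beta-1})$. Second, I bound $|e^{iT\psi'(\overline{c})v}-1|\le T|\psi'(\overline{c})||v|\le KT^{\beta}|v|$ using \eqref{eq:psiderivativebound}. To combine them, I insert and subtract $e^{T(\psi(c^\ast_T)+i\psi'(\overline{c})v-\psi''(\overline{c})v^2/2)}$ and use the triangle inequality. The first piece is at most
\[
O(T^{2\beta-1})\,e^{T\psi(c^\ast_T)}\int e^{-T\psi''(\overline{c})v^2/2}\,dv = O(T^{2\beta-1})\,T^{-1/2}e^{T\psi(c^\ast_T)}.
\]
The second piece is at most
\[
KT^\beta e^{T\psi(c^\ast_T)}\int |v|\,e^{-T\psi''(\overline{c})v^2/2}\,dv = O(T^{\beta-1})\,e^{T\psi(c^\ast_T)} = O(T^{\beta-1/2})\,T^{-1/2}e^{T\psi(c^\ast_T)}.
\]
Both factors vanish because $\beta<\tfrac12$, so $f^{(5)}=O(T^{2\beta-1}+T^{\beta-1/2})$, uniformly over the admissible $x,y$. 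Taking the maximum of all thresholds completes the lemma.
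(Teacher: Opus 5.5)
Your proof is correct and follows the paper's argument. You handle the seven terms one at a time using the preceding lemmas, and use \eqref{eq:psidiffbound} to trade $e^{T\psi(\overline{c})}$ for $e^{T\psi(c^\ast_T)}$. The differences are minor: you absorb $e^{T(\psi(\overline{c})-\psi(c^\ast_T))}$ into the constant $2$, you use \eqref{eq:repsibound} for extra decay in $D^{(1)}$, you keep the exact Gaussian tail in $D^{(6)}$, and in $D^{(5)}$ you split off the phase $e^{iT\psi'(\overline{c})v}$ using $|e^{i\phi}-1|\le|\phi|$ and the Gaussian first moment, which gives the rate $T^{\beta-1/2}$ (not depending on $\alpha$) instead of the paper's $T^{\beta-\alpha}$.
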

With this lemma, \eqref{eq:FTsplitDis} turns into
\begin{equation}\label{eq:FTsplitfis}
    \left|\P_x(Z_T\leq y)-\frac{1}{\sqrt{2\pi}}\frac{a_x(\overline{c})}{\sqrt{\psi''(\overline{c})}}T^{-1/2}e^{T\psi(c^\ast_T)}\right|
    \leq\left(f^{(1)}+\ldots+f^{(7)}\right)\,a_x(\overline{c})\,T^{-1/2}\,e^{T\psi(c^\ast_T)},
\end{equation}
for all $T\geq\max\left\{T^{(1)},\ldots,T^{(7)}\right\}$ and any $x,y$ as specified in the lemma. Since the sum of the $f^{(i)}$ vanishes (in $T$) as well,
there has to be some $T_{\delta,\beta,K}\geq \max\left\{T^{(1)},\ldots,T^{(7)}\right\}$ (the dependencies will become apparent when we determine the $f^{(i)}$) such that, for all $T\geq T_{\delta,\beta, K}$
\begin{equation*}
    f^{(1)}+\ldots+f^{(7)}
    \leq \delta\,\frac{1}{\sqrt{2\pi}}\frac{1}{\sqrt{\psi''(\overline{c})}},
\end{equation*}
and plugging this into \eqref{eq:FTsplitfis} yields
\begin{equation}
    \left|\P_x(Z_T\leq y)-\frac{1}{\sqrt{2\pi}}\frac{a_x(\overline{c})}{\sqrt{\psi''(\overline{c})}}T^{-1/2}e^{T\psi(c^\ast_T)}\right|
    \leq \delta\times\frac{1}{\sqrt{2\pi}}\frac{a_x(\overline{c})}{\sqrt{\psi''(\overline{c})}}T^{-1/2}e^{T\psi(c^\ast_T)},
\end{equation}
or equivalently
\begin{equation}\label{eq:resultnoteval}
    (1-\delta)\frac{1}{\sqrt{2\pi}}\,\frac{a_x(\overline{c})}{\sqrt{\psi''(\overline{c})}}\,T^{-1/2}\,e^{T\psi(c^\ast_T)}
    \leq \P_x(Z_T\leq y)
    \leq (1+\delta)\frac{1}{\sqrt{2\pi}}\,\frac{a_x(\overline{c})}{\sqrt{\psi''(\overline{c})}}\,T^{-1/2}\,e^{T\psi(c^\ast_T)},
\end{equation}
for all $T\geq T_{\delta,\beta,K}$, any $\abs x\abs\leq T^{\beta/2}$ and any $y>0$ with $\abs y-\theta T\abs\leq KT^\beta$. Explicitly evaluating the terms in \eqref{eq:resultnoteval} in the next section then proves Theorem \ref{thm:uniformasymptotics}.\\\\
Before evaluating we still have to prove Lemma \ref{lem:Difirelation}.
\begin{proof}[Proof of Lemma~\ref{lem:Difirelation}]
    We will first identify the $f^{(i)}$ satisfying \eqref{eq:Difirelation} for all $i=1,\ldots,7$
    and then later confirm that they are non-negative and indeed vanishing for $T\to\infty$.\\\\
    \textit{Main integral outside a local segment} ($D^{(1)}$ \textit{and} $D^{(2)}$).\\
    Lemma \ref{lem:mainintnotlocalbound} gives us a constant $C^{(1)}>0$ only depending on $\overline{c}$
    (since $\overline{r}$ only depends on $\overline{c}$ as well)
    such that, for all $T\geq T^{(1)}=\max\{T^{(0)}, 1\}$,
    \begin{align}\label{eq:I_T(H-r)boundexact}
        D^{(1)}=\limsup\limits_{H\to\infty}\left|I_T(H)-I_T(\overline{r})\right|
        &\leq C^{(1)}\,a_x(\overline{c})\,T^{-1}\,e^{T\Re\psi(\overline{c}+ir)}\nonumber\\
        &\leq C^{(1)}\,T^{-1/2}\,a_x(\overline{c})\,T^{-1/2}\,e^{T\psi(\overline{c})}\nonumber\\
        &= C^{(1)}\,T^{-1/2} \,e^{T(\psi(\overline{c})-\psi(c^\ast_T))}\,a_x(\overline{c})\,T^{-1/2}\,e^{T\psi(c^\ast_T)}\nonumber\\
        &\leq C^{(1)}\,T^{-1/2} \,e^{\frac{K^2}{4\theta^3}T^{2\beta -1}}\cdot a_x(\overline{c})\,T^{-1/2}\,e^{T\psi(c^\ast_T)}\nonumber\\
        &=: f^{(1)}_{\beta,K}(T)\cdot a_x(\overline{c})\,T^{-1/2}\,e^{T\psi(c^\ast_T)}
    \end{align}
    where we used \eqref{eq:psidiffbound} (as $T\geq T^{(0)}$) and
    \begin{equation*}
        \Re \psi(c+iv)=\frac{\gamma}{2}(1-\Re g(c+iv))+zc
        \leq \frac{\gamma}{2}(1-g(c))+zc = \psi(c),
    \end{equation*}
    as $\Re g(c+iv)\geq g(c)$ (Lemma \ref{lem:reg}(a)) for any $c>0$ and $v\in\R$.     Setting $T^{(2)}:=\max\left\{T^{(0)},\left(1/{\overline{r}}\right)^{1/\alpha}\right\}$,
    Lemma \ref{lem:IT(r-Talpha)} together with \eqref{eq:psidiffbound} yields
    \begin{align}\label{eq:I_T(r-Talpha)boundexact}
        D^{(2)}=\left|I_T(\overline{r})-I_T(T^{-\alpha})\right|
        &\leq \frac{\sqrt{2}\,\overline{r}}{\pi}\,a_x(\overline{c})\,e^{-\frac{\psi''(\overline{c})}{4}T^{1-2\alpha}}\,e^{T\psi(\overline{c})}\nonumber\\
        &=\frac{\sqrt{2}\,\overline{r}}{\pi}\,T^{1/2}\,e^{-\frac{\psi''(\overline{c})}{4}T^{1-2\alpha}}\,e^{T\left(\psi(\overline{c})-\psi(c^\ast_T)\right)}\,a_x(\overline{c})\,T^{-1/2}\,e^{T\psi(c^\ast_T)}\nonumber\\
        &\leq \frac{\sqrt{2}\,\overline{r}}{\pi}\,T^{1/2}\,e^{-\frac{\psi''(\overline{c})}{4}T^{1-2\alpha}}\,e^{\frac{K^2}{4\theta^3}T^{2\beta-1}}\cdot a_x(\overline{c})\,T^{-1/2}\,e^{T\psi(c^\ast_T)}\nonumber\\
        &=:f^{(2)}_{\beta,K}(T)\cdot a_x(\overline{c})\,T^{-1/2}\,e^{T\psi(c^\ast_T)},
    \end{align}
    for all $T\geq T^{(2)}$.\\\\
    \textit{Main integral inside a local segment} ($D^{(3)}$ \textit{and} $D^{(4)}$).\\
    Take $T^{(3)}:=\max\left\{T^{(0)},\left(2/\overline{r}\right)^{1/\alpha},\,\gamma^{1/(\beta-\alpha)}\right\}$. Then, Lemma \ref{lem:I_T(Talpha)1} gives the existence of a constant $C^{(3)}>0$ only depending on $\overline{c}$,
    such that for any $T\geq T^{(3)}$ and any $\abs x\abs\leq T^{\beta/2}$ (being equivalent to $T\geq\abs x\abs^{2/\beta}$) we have
    \begin{align}\label{eq:I_T(Talpha)axboundexact}
        D^{(3)}=\left|I_T(T^{-\alpha})-\frac{1}{2\pi}a_x(\overline{c})\int_{-T^{-\alpha}}^{T^{-\alpha}}e^{T\psi(\overline{c}+iv)}\,dv\right|
        &\leq~C^{(3)}\,a_x(\overline{c})\,T^{\beta-1}\,e^{T\psi(\overline{c})}\nonumber\\
        &=C^{(3)}\,T^{\beta-1/2}\,e^{T\left(\psi(\overline{c})-\psi(c^\ast_T)\right)}\,a_x(\overline{c})\,T^{-1/2}\,e^{T\psi(c^\ast_T)}\nonumber\\
        &\leq C^{(3)}\,T^{\beta-1/2}\,e^{\frac{K^2}{4\theta^3}T^{2\beta-1}}\cdot a_x(\overline{c})\,T^{-1/2}\,e^{T\psi(c^\ast_T)}\nonumber\\
        &=:f^{(3)}_{\beta,K}(T)\cdot a_x(\overline{c})\,T^{-1/2}\,e^{T\psi(c^\ast_T)},
    \end{align}
    where the last inequality follows from \eqref{eq:psidiffbound}.
    By Lemma \ref{lem:I_T(Talpha)2} (using $\alpha>\frac13$) there exists a constant $C^{(4)}>0$ only depending on $\overline{c}$ such that 
    for all $T\geq T^{(4)}:=\max\left\{T^{(0)},\left(2/\overline{r}\right)^{1/\alpha},\,\left(2\overline{r}/{\psi''(\overline{c})}\right)^{1/(1-3\alpha)}\right\}$ it holds
    \begin{align}\label{eq:I_T(Talpha)psiboundexact}
        D^{(4)}&=\left|\frac{1}{2\pi}a_x(\overline{c})\int_{-T^{-\alpha}}^{T^{-\alpha}}e^{T\psi(\overline{c}+iv)}\,dv-\frac{1}{2\pi}a_x(\overline{c})\int_{-T^{-\alpha}}^{T^{-\alpha}}e^{T\left(\psi(\overline{c})+i\psi'(\overline{c})v-\frac{\psi''(\overline{c})}{2}v^2\right)}\,dv\right|\nonumber\\
        &\leq C^{(4)}\,a_x(\overline{c})\,T^{1/2-3\alpha}\,e^{T\psi(\overline{c})}\nonumber\\
        &=C^{(4)}\,T^{1-3\alpha}\,e^{T\left(\psi(\overline{c})-\psi(c^\ast_T)\right)}\,a_x(\overline{c})\,T^{-1/2}\,e^{T\psi(c^\ast_T)}\nonumber\\
        &\leq C^{(4)}\,T^{1-3\alpha}\,e^{\frac{K^2}{4\theta^3}T^{2\beta-1}}\cdot a_x(\overline{c})\,T^{-1/2}\,e^{T\psi(c^\ast_T)}\nonumber\\
        &=:f^{(4)}_{\beta,K}(T)\cdot a_x(\overline{c})\,T^{-1/2}\,e^{T\psi(c^\ast_T)},
    \end{align}
    where we used \eqref{eq:psidiffbound} for the last inequality.\\\\
    \textit{Evaluating the integral} ($D^{(5)}$ \textit{and} $D^{(6)}$).\\
    In order to find $f^{(5)}$, we first note that
    combining \eqref{eq:psidiffbound} and \eqref{eq:psiderivativebound} yields
    \begin{equation}
        \abs T\left(\psi(\overline{c})-\psi(c^\ast_T)+i\psi'(\overline{c})v\right)\abs
        \leq T\left(\frac{K^2}{4\theta^3} T^{2\beta-2}+KT^{\beta-1} \abs v\abs\right)
        \leq \frac{K^2}{4\theta^3} T^{2\beta-1}+KT^{\beta-\alpha},
    \end{equation}
    for all $T\geq T^{(0)}$ and $\abs v\abs\leq T^{-\alpha}$.
    As $\beta<\alpha<\frac12$ there exists a $T^{(5)}\geq T^{(0)}$ such that
    \[\abs T\left(\psi(\overline{c})-\psi(c^\ast_T)+i\psi'(\overline{c})v\right)\abs\leq\frac12,\] 
    and therefore, by Lemma \ref{lem:expTaylor},
    \begin{equation}\label{eq:exppsidiffbound}
        \abs e^{T\left(\psi(\overline{c})-\psi(c^\ast_T)+i\psi'(\overline{c})v\right)}-1\abs\leq 2e\abs T\left(\psi(\overline{c})-\psi(c^\ast_T)+i\psi'(\overline{c})v\right)\abs\leq 2e\left(\frac{K^2}{4\theta^3} T^{2\beta-1}+KT^{\beta-\alpha}\right),
    \end{equation}
    on $\abs v\abs\leq T^{-\alpha}$ for all $T\geq T^{(5)}$.
    We can now use \eqref{eq:exppsidiffbound} in the following:
    \begin{align}\label{eq:D5boundexact}
        D^{(5)}&=\left|\frac{a_x(\overline{c})}{2\pi}\int_{-T^{-\alpha}}^{T^{-\alpha}}e^{T\left(\psi(\overline{c})+i\psi'(\overline{c})v-\frac{\psi''(\overline{c})}{2}v^2\right)}\,dv-\frac{a_x(\overline{c})}{2\pi}\int_{-T^{-\alpha}}^{T^{-\alpha}}e^{T\left(\psi(c^\ast_T)-\frac{\psi''(\overline{c})}{2}v^2\right)}\,dv\right|\nonumber\\
        &=\frac{a_x(\overline{c})}{2\pi}\left|e^{T\psi(c^\ast_T)}\int_{-T^{-\alpha}}^{T^{-\alpha}}\left(e^{T\left(\psi(\overline{c})-\psi(c^\ast_T)+i\psi'(\overline{c})v\right)}-1\right)e^{-T\frac{\psi''(\overline{c})}{2}v^2}\,dv\right|\nonumber\\
        &\leq\frac{a_x(\overline{c})}{2\pi}\,e^{T\psi(c^\ast_T)}\int_{-T^{-\alpha}}^{T^{-\alpha}}\left|e^{T\left(\psi(\overline{c})-\psi(c^\ast_T)+i\psi'(\overline{c})v\right)}-1\right|e^{-T\frac{\psi''(\overline{c})}{2}v^2}\,dv\nonumber\\
        &\leq\frac{e}{\pi}a_x(\overline{c})\left(\frac{K^2}{4\theta^3} T^{2\beta-1}+KT^{\beta-\alpha}\right)\,e^{T\psi(c^\ast_T)}\int_{-T^{-\alpha}}^{T^{-\alpha}}e^{-T\frac{\psi''(\overline{c})}{2}v^2}\,dv\nonumber\\
        &=\frac{e}{\pi}a_x(\overline{c})\left(\frac{K^2}{4\theta^3} T^{2\beta-1}+KT^{\beta-\alpha}\right)\,\frac{1}{\sqrt{T\psi''(\overline{c})}}\,e^{T\psi(c^\ast_T)}\int_{-\sqrt{\psi''(\overline{c})}T^{1/2-\alpha}}^{\sqrt{\psi''(\overline{c})}T^{1/2-\alpha}}e^{-\frac{s^2}{2}}\,ds\nonumber\\
        &\leq \frac{e\,\sqrt{2}}{\sqrt{\pi\,\psi''(\overline{c})}}\left(\frac{K^2}{4\theta^3} T^{2\beta-1}+KT^{\beta-\alpha}\right)\,a_x(\overline{c})\,T^{-1/2}\,e^{T\psi(c^\ast_T)}\nonumber\\
        &=:f^{(5)}_{\beta,K}(T)~~a_x(\overline{c})\,T^{-1/2}\,e^{T\psi(c^\ast_T)},
    \end{align}
    for all $T\geq T^{(5)}$, where we used the substitution $s=\sqrt{T\,\psi''(\overline{c})}\,v$ and that
    \[\int_{-\sqrt{\psi''(\overline{c})}T^{1/2-\alpha}}^{\sqrt{\psi''(\overline{c})}T^{1/2-\alpha}}e^{-\frac{s^2}{2}}\,ds\leq \int_{-\infty}^{\infty}e^{-\frac{s^2}{2}}\,ds=\sqrt{2\pi}.\]
    With the same substitution, we also have
    \begin{align*}
        &\left|\frac{1}{\sqrt{2\pi}}\,\sqrt{T\,\psi''(\overline{c})}\int_{-T^{-\alpha}}^{T^{-\alpha}}e^{-T\frac{\psi''(\overline{c})}{2}v^2}\,dv-1\right|
        =\left|\frac{1}{\sqrt{2\pi}}\int_{-\sqrt{\psi''(\overline{c})}\,T^{1/2-\alpha}}^{\sqrt{\psi''(\overline{c})}\,T^{1/2-\alpha}}e^{-\frac{s^2}{2}}\,ds-1\right|\\
        =~&\frac{1}{\sqrt{2\pi}}\int_{\abs s\abs\geq\sqrt{\psi''(\overline{c})}\,T^{1/2-\alpha}} e^{-\frac{s^2}{2}}\,ds
        \leq \frac{\sqrt{2}}{\sqrt{\pi}\sqrt{\psi''(\overline{c})}}\,T^{\alpha-1/2},
    \end{align*}
    where we used Markov's inequality. This however implies for all $T\geq T^{(6)}:=T^{(0)}$
    \begin{align}\label{eq:asymptoticexact}
        D^{(6)}&=\left|\frac{1}{2\pi}a_x(\overline{c})\int_{-T^{-\alpha}}^{T^{-\alpha}}e^{T\left(\psi(c^\ast_T)-\frac{\psi''(\overline{c})}{2}v^2\right)}\,dv-\frac{1}{\sqrt{2\pi}}\,\frac{a_x(\overline{c})}{\sqrt{\psi''(\overline{c})}}\,T^{-1/2}\,e^{T\psi(c^\ast_T)}\right|\nonumber\\
        &=\frac{1}{\sqrt{2\pi}}\,\frac{a_x(\overline{c})}{\sqrt{\psi''(\overline{c})}}\,T^{-1/2}\,e^{T\psi(c^\ast_T)}\left|\frac{1}{\sqrt{2\pi}}\,\sqrt{T\,\psi''(\overline{c})}\int_{-T^{-\alpha}}^{T^{-\alpha}}e^{-T\frac{\psi''(\overline{c})}{2}v^2}\,dv-1\right|\nonumber\\
        &\leq \frac{1}{\pi\,\psi''(\overline{c})}\,T^{\alpha-1/2}\cdot a_x(\overline{c})\,T^{-1/2}\,e^{T\psi(c^\ast_T)}\nonumber\\
        &=:f^{(6)}_{\beta,K}(T)\cdot a_x(\overline{c})\,T^{-1/2}\,e^{T\psi(c^\ast_T)}.
    \end{align}
    \textit{Error integral} ($D^{(7)}$).\\
    Lemma \ref{lem:errorint} gives us a constant $C^{(7)}>0$ and a $T^{(7)}\geq T^{(0)}$ such that
    \begin{align}\label{eq:E_T(H)boundexact}
        D^{(7)}=\limsup_{H\to\infty} \left|E_T(H)\right|
        &\leq C^{(7)}\, a_x(\overline{c})\,T^{-2}\,e^{T\psi(\overline{c})}\nonumber\\
        &= C^{(7)}\,T^{-3/2}\,e^{T(\psi(\overline{c})-\psi(c^\ast_T))}\,a_x(\overline{c})\,T^{-1/2}\,e^{T\psi(c^\ast_T)}\nonumber\\
        &\leq C^{(7)}\,T^{-3/2}\,e^{\frac{K^2}{4\theta^3}T^{2\beta-1}}\cdot a_x(\overline{c})\,T^{-1/2}\,e^{T\psi(c^\ast_T)}\nonumber\\
        &=:f^{(7)}_{\beta,K}(T)\cdot a_x(\overline{c})\,T^{-1/2}\,e^{T\psi(c^\ast_T)}.
    \end{align}
    for all $T\geq T^{(7)}$ and any $\abs x\abs\leq T^{\beta/2}\leq T^{1/2}$, where the last inequality again follows from \eqref{eq:psidiffbound}.\\\\
    After determining $f^{(i)}$ that satisfy \eqref{eq:Difirelation} for each $i=1,\ldots,7$ it remains to be seen that they are non-negative and vanish for $T\to\infty$.
    The non-negativity is trivial from the definitions.
    To check their behaviour for $T\to\infty$, note that
    $\beta<\frac12$ implies $T^{\beta-1/2},T^{2\beta-1}\to 0$ which in turn implies $e^{\frac{K}{4\theta^3}T^{2\beta-1}}\to 1$.
    Further, $\alpha\in(\frac13,\frac12)$ and $\alpha>\beta$ imply $T^{1-3\alpha}\to 0$, as well as $T^{\alpha-1/2}\to 0$ and $T^{\beta-\alpha}\to 0$, and
    $T^{1/2}e^{-\frac{\psi''(\overline{c})}{4}T^{1-2\alpha}}\to 0$, since $1-2\alpha>0$ and $\psi''(\overline{c})>0$. This shows that any term depending on $T$ in the definitions of $f^{(1)},\ldots, f^{(7)}$ 
    (i.e.\ \eqref{eq:I_T(H-r)boundexact}, \eqref{eq:I_T(r-Talpha)boundexact}, \eqref{eq:I_T(Talpha)axboundexact}), \eqref{eq:I_T(Talpha)psiboundexact}, \eqref{eq:D5boundexact}, \eqref{eq:asymptoticexact} and \eqref{eq:E_T(H)boundexact}) vanishes for $T\to\infty$ and we have proven Lemma \ref{lem:Difirelation}.
\end{proof}

\subsubsection{Explicit evaluation of the asymptotics}
Now, all that remains to be done is the explicit evaluation of the prefactor and asymptotic rate of the exponential in \eqref{eq:resultnoteval}. 
Plugging in the definition of $\overline{c}$ and $g(\overline{c})=\frac{1}{2\gamma\theta}$ into $a_x$ yields
\begin{align*}
    a_x(\overline{c})&=\frac{1}{\overline{c}}\sqrt{\frac{2g(\overline{c})}{1+g(\overline{c})}}\,\exp\left(-\frac{\gamma}{2}x^2(g(\overline{c})-1)\right)\\
    &=\frac{8\theta^2}{1-4\gamma^2\theta^2}\sqrt{\frac{\frac{1}{\gamma\theta}}{1+\frac{1}{2\gamma\theta}}}\,\exp\left(-\frac{\gamma}{2}x^2\left(\frac{1}{2\gamma\theta}-1\right)\right)\\
    &=\frac{\sqrt{2}\,8\theta^2}{\left(1+2\gamma\theta\right)^{3/2}\left(1-2\gamma\theta\right)}\,\exp\left(-\frac{x^2}{4\theta}\left(1-2\gamma\theta\right)\right).
\end{align*}
Continuing, by \eqref{eq:psiderivatives} we have
\begin{align*}
    \psi''(\overline{c})=\frac{1}{2\gamma^3 g(\overline{c})^3}=\frac{8\gamma^3\theta^3}{2\gamma^3}=4\theta^3
\end{align*}
and most importantly, by definition of $c^\ast_T$ and $g(c^\ast_T)=\frac{1}{2\gamma z}$,
\begin{align*}
    \psi(c_T^\ast)&=\frac{\gamma}{2}\left(1-g(c^\ast_T)\right)+z c^\ast_T
    =\frac{\gamma}{2}\left(1-\frac{1}{2\gamma z}\right)+z\frac{1-4\gamma^2 z^2}{8 z^2}\\
    &=\frac{2\gamma z-1}{4z}+\frac{1-4\gamma^2 z^2}{8z}
    =-\frac{4\gamma^2 z^2-4\gamma z+1}{8z}
    =-\frac{1}{8z}\left(2\gamma z-1\right)^2.
\end{align*}
This gives us the expression
\begin{align*}
    &\frac{1}{\sqrt{2\pi}}\frac{a_x(\overline{c})}{\sqrt{\psi''(\overline{c})}}\,T^{-1/2}\,e^{T\psi(c^\ast_T)}\\
    =~&\frac{1}{\sqrt{2\pi}}\frac{\sqrt{2}\,8\theta^2}{\left(1+2\gamma\theta\right)^{3/2}\left(1-2\gamma\theta\right)}\frac{1}{\sqrt{4\theta^3}}\,\exp\left(-\frac{x^2}{4\theta}\left(1-2\gamma\theta\right)\right)T^{-1/2}\,\exp\left({-T\frac{1}{8z}\left(1-2\gamma z\right)^2}\right)\\
    =~&\frac{1}{\sqrt{\pi}}\frac{4\sqrt{\theta}}{\left(1+2\gamma\theta\right)^{3/2}\left(1-2\gamma\theta\right)}\,T^{-1/2}\,\exp\left(-\frac{1}{8z}\left(1-2\gamma z\right)^2T-\frac{x^2}{4\theta}\left(1-2\gamma\theta\right)\right),
\end{align*}
which together with \eqref{eq:resultnoteval} completes the proof of Theorem \ref{thm:uniformasymptotics}.


\section{Proof of the Main Theorem}\label{sec:proofmainthm}

An Ornstein-Uhlenbeck process with parameter $\gamma$, started in $x$, can be defined as the strong solution of the stochastic differential equation
\begin{align}\label{def:SDE}
    dX_t = -\gamma X_t\,dt + dB_t,\quad t\geq 0,\quad\text{with}~ X_0=x,
\end{align}
where $B$ is a Brownian motion.
In the following, let $W$ be the Brownian motion for which $U$ is the strong solution of \eqref{def:SDE} and $\FF=\left(\FF_t\right)_{t\geq 0}$ its natural filtration.
Further, we want to consider $U$ as a random variable with values in $\CC([0,\infty))$ or its restriction to finite intervals.
We equip $\CC([0,\infty))$ with the topology of uniform convergence on compact subsets (cf.\ \cite{wcWhitt}) and the Borel $\sigma$-field it generates.\\

To examine the long term behavior of the Ornstein--Uhlenbeck process with restricted $L_2$-norm, we will first use its Markov property to separate the long term behaviour from that on a finite interval $[0,t_0]$ in the following lemma.

\begin{lem}\label{lem:markovProperty}
    Let $t_0>0$ and $y>0$. Then, for any Borel-measurable subset $D$ of $\CC([0,t_0])$, it holds for any $T\geq t_0$
    \begin{align}\label{eq:expectationofratiolem}
        \P_x(U\in D~\vert~Z_T\leq y)=\E_x\left[\1_{U\in D}~\frac{\P_{U_{t_0}}\left(Z_{T-t_0}\leq y-Z_{t_0}\right)}{\P_x(Z_T\leq y)}\right].
    \end{align}
\end{lem}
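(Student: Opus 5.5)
The identity follows from the definition of conditional probability combined with the Markov property of $U$ at the deterministic time $t_0$. First, $\P_x(Z_T\leq y)>0$. This holds, for instance, because $Z_T$ has a positive density near any $y>0$, or more elementarily because the small ball event $\{\sup_{s\le T}|U_s|\le \sqrt{y/T}\}$ has positive probability when $|x|<\sqrt{y/T}$. For general $x$ we can use the Laplace transform/Bromwich representation, or simply note that we only need the lemma when the denominator is nonzero, which Theorem~\ref{thm:uniformasymptotics} guarantees for large $T$. Given positivity, the conditional probability equals $\P_x(U\in D,\,Z_T\le y)/\P_x(Z_T\le y)$, and it remains to show
\begin{equation*}
\P_x(U\in D,\,Z_T\le y)=\E_x\left[\1_{U\in D}\,\P_{U_{t_0}}\left(Z_{T-t_0}\le y-Z_{t_0}\right)\right].
\end{equation*}

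Next, decompose $Z_T=Z_{t_0}+\int_{t_0}^T U_s^2\,ds = Z_{t_0}+Z_{T-t_0}\circ\vartheta_{t_0}$, where $\vartheta_{t_0}$ is the shift operator on paths. Both $\1_{U\in D}$ and $Z_{t_0}$ are $\FF_{t_0}$-measurable, since they are functionals of $(U_s)_{s\le t_0}$ and $U$ is adapted to the Brownian filtration $\FF$. Conditioning on $\FF_{t_0}$ then gives
\begin{equation*}
\P_x(U\in D,\,Z_T\le y)=\E_x\left[\1_{U\in D}\,\P_x\left(Z_{T-t_0}\circ\vartheta_{t_0}\le y-Z_{t_0}\mid\FF_{t_0}\right)\right].
\end{equation*}

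The key step is to identify this conditional probability as $\P_{U_{t_0}}(Z_{T-t_0}\le y-Z_{t_0})$. This is the Markov property in its form with a jointly measurable functional: for a measurable $F:\CC([0,\infty))\times\R\to[0,1]$ and an $\FF_{t_0}$-measurable random variable $\xi$,
\begin{equation*}
\E_x\left[F(U\circ\vartheta_{t_0},\xi)\mid\FF_{t_0}\right]=\Phi(U_{t_0},\xi),\qquad \Phi(w,s):=\E_w[F(U,s)].
\end{equation*}
Here we apply it with $F(\omega,s)=\1\{\int_0^{T-t_0}\omega_r^2\,dr\le s\}$ and $\xi=y-Z_{t_0}$.

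I would prove this either by the strong-solution representation, or by a monotone class argument starting from product functions $F(\omega,s)=F_1(\omega)F_2(s)$, where it reduces to the ordinary Markov property. The strong-solution route writes $U_{t_0+r}$ as a measurable function of $U_{t_0}$ and the increments $W_{t_0+\cdot}-W_{t_0}$; these increments are independent of $\FF_{t_0}$, so the freezing lemma applies. This measurability and freezing step is the only point needing care. Joint measurability of $(w,s)\mapsto\P_w(Z_{T-t_0}\le s)$ follows from the explicit Gaussian dependence on the starting point: the law of $U$ started at $w$ is that of $we^{-\gamma\cdot}+U^0$. Inserting the result into the expectation yields \eqref{eq:expectationofratiolem}.
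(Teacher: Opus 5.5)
Your proposal is correct and follows the same route as the paper. You write the conditional probability as a ratio, split $Z_T=Z_{t_0}+\int_{t_0}^T U_s^2\,ds$, condition on $\FF_{t_0}$, and apply the Markov property at $t_0$ with $Z_{t_0}$ held fixed. Your freezing-lemma/monotone-class argument, together with the joint measurability of $(w,s)\mapsto\P_w(Z_{T-t_0}\le s)$, makes rigorous the step the paper only indicates by writing $Z_{t_0}(\omega)$ inside the inner probability, and your remark on the positivity of $\P_x(Z_T\le y)$ addresses a point the paper leaves implicit.
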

\begin{proof}
    Since
    \begin{align*}
        \int_{0}^{T} U_s^2\,ds = \int_{0}^{t_0} U_s^2\,ds+\int_{t_0}^{T} U_s^2\,ds 
        =\int_{0}^{t_0} U_s^2\,ds+ \int_{0}^{T-t_0} (U_{s+t_0})^2\,ds,
    \end{align*}
    we know that $Z_T\leq y$ if and only if
    \begin{align}
    \tilde{Z}_{T-t_0}\leq y-Z_{t_0},\qquad\text{with}~\tilde{Z}=\left(\int_{0}^{t}\tilde{U}_{s}^2\,ds\right)_{t\geq 0}
    \end{align}
    where $\tilde{U}=(U_{t+t_0})_{t\geq 0}$ is an Ornstein--Uhlenbeck process with parameter $\gamma$ started in $U_{t_0}$, by the Markov property of $U$.
    For any Borel-measurable $D\subset\CC\left([0,t_0]\right)$, since $U$ is adapted to $\FF$, we have $\{U\in D\}\in \FF_{t_0}$ and the observations above yield
    \begin{align}
        \P_x(U\in D,~Z_T\leq y)&=\E_x\left[\1_{U\in D}~\1_{Z_T\leq y}\right]
        =\E_x\left[~\E_x\left[\1_{U\in D}~\1_{Z_T\leq y}~\vert~\FF_{t_0}\right]\right]\nonumber\\
        &=\E_x\left[\1_{U\in D}~\E_x\left[\1_{\tilde{Z}_{T-t_0}\leq y-Z_{t_0}}~\vert~\FF_{t_0}\right]\right]\nonumber\\
        &=\E_x\left[\1_{U\in D}~\E_{U_{t_0}}\left[\1_{\tilde{Z}_{T-t_0}\leq y-Z_{t_0}(\omega)}\right]\right]\nonumber\\
        &=\E_x\left[\1_{U\in D}~\P_{U_{t_0}}\left(\tilde{Z}_{T-t_0}\leq y-Z_{t_0}(\omega)\right)\right],
    \end{align}
    where $Z_{t_0}(\omega)$ is supposed to indicate that the inner expectation and probability are taken only w.r.t. $\tilde{Z}$.
    From this it follows that
    \begin{align}\label{eq:expectationofratio}
        \P_x(U\in D~\vert~Z_T\leq y)=\E_x\left[\1_{U\in D}~\frac{\P_{U_{t_0}}\left(\tilde{Z}_{T-t_0}\leq y-Z_{t_0}(\omega)\right)}{\P_x(Z_T\leq y)}\right].
    \end{align}
    Since $\tilde{Z}$ and $Z$ are the same functional of an Ornstein-Uhlenbeck process with parameter $\gamma$, \eqref{eq:expectationofratio} is equivalent to the claim of the lemma.
\end{proof}

Since we have ``turned" the conditional probability into the ratio of two small ball probabilities, 
we will need the following uniform lemma on those ratios.
\begin{lem}\label{lem:ratiouniformasymptotics}
    Fix $x\in \R$ and $t_0>0$. Let $0<\theta<\frac{1}{2\gamma}$ and $\beta\in(0,\frac12)$. Then, for any $\delta\in(0,1)$ and $K>0$, there exists a $T_{\delta,\beta,K}>0$, such that for all $T\geq T_{\delta,\beta,K}$ and any $y_0,z\in\R$, $y>0$ with
    \[\abs z\abs\leq \frac{1}{2^{\beta/2}}T^{\beta/2}\quad\text{and}\quad \left|y-\theta T\right|\leq K\,T^\beta\quad\text{and}\quad 0\leq y_0\leq T^\beta\]
    it holds
    \begin{equation*}
        (1-\delta) h(z,y_0,t_o)\leq \frac{\P_z(Z_{T-t_0}\leq y-y_0)}{\P_x(Z_T\leq y)}
        \leq (1+\delta)h(z,y_0,t_0),
    \end{equation*}
    where
    \begin{equation*}
        h(z,y_0,t_0):=\exp\left((x^2-z^2+t_0)\frac{1-2\gamma\theta}{4\theta}- y_0\frac{1-4\gamma^2\theta^2}{8\theta^2}\right).
    \end{equation*}
\end{lem}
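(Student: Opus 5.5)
The plan is to apply Theorem~\ref{thm:uniformasymptotics} twice, once to the numerator and once to the denominator, and then compute the ratio of the two main terms explicitly.

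\textbf{Denominator.} Here the time horizon is $T$, the starting point is the fixed $x$, and the level is $y$. Since $x$ is fixed, $\abs x\abs\leq T^{\beta/2}$ holds once $T$ is large. Together with $\abs y-\theta T\abs\leq KT^\beta$, Theorem~\ref{thm:uniformasymptotics} gives
\[\P_x(Z_T\leq y)=(1+O(\delta))\,C_\theta\,A_x(T,y),\]
where $A_x(T,y)$ denotes $A(T)$ with the starting point $x$ and level $y$ inserted.

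\textbf{Numerator.} Put $T':=T-t_0$ and $y':=y-y_0$, and check the hypotheses of Theorem~\ref{thm:uniformasymptotics} for $(T',z,y')$ with a modified constant $K'$.
\begin{itemize}
\item \emph{Starting point.} For $T\geq 2t_0$ we have $T'\geq T/2$, so $\abs z\abs\leq 2^{-\beta/2}T^{\beta/2}\leq T'^{\beta/2}$. This is exactly why the factor $2^{-\beta/2}$ appears in the statement.
\item \emph{Level.} We have $\abs y'-\theta T'\abs\leq \abs y-\theta T\abs+y_0+\theta t_0\leq KT^\beta+T^\beta+\theta t_0\leq K' T'^\beta$ for $T$ large, with, say, $K':=2^\beta(K+2)$.
\end{itemize}
Theorem~\ref{thm:uniformasymptotics} applied with $K'$ and $\delta'$ then yields the two-sided bound for the numerator, with main term $C_\theta A_z(T',y')$. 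Choosing $\delta'$ small relative to $\delta$, the ratio of probabilities lies between $\frac{1-\delta'}{1+\delta'}$ and $\frac{1+\delta'}{1-\delta'}$ times $A_z(T',y')/A_x(T,y)$.

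\textbf{Ratio of main terms.} It remains to show
\[\frac{A_z(T',y')}{A_x(T,y)}\longrightarrow h(z,y_0,t_0)\quad\text{uniformly in the admissible }z,y_0,y.\]
\begin{itemize}
\item \emph{Polynomial prefactor.} It is $\sqrt{T/T'}\to1$.
\item \emph{Starting-point terms.} They give exactly $\exp\bigl((x^2-z^2)\frac{1-2\gamma\theta}{4\theta}\bigr)$.
\item \emph{Exponential rate.} Write $F(s,w):=-\frac{(s-2\gamma w)^2}{8w}$. Then $F$ is homogeneous of degree one, $F(s,w)=s\,F(1,w/s)$, so the rate part of $A$ is $\exp(F(T,y))$ and $F(T',y')=(T-t_0)\,F\bigl(1,\frac{y-y_0}{T-t_0}\bigr)$. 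We therefore need $F(T,y)-F(T-t_0,y-y_0)$ up to $o(1)$. Since $F$ is smooth near $(1,\theta)$, a first-order Taylor expansion in the increment $(t_0,y_0)$ gives
\[F(T,y)-F(T-t_0,y-y_0)=\partial_sF\,t_0+\partial_wF\,y_0+\text{remainder}.\]
The partial derivatives are homogeneous of degree $0$, so they may be evaluated at $(1,y/T)$ and then at $(1,\theta)$.
\end{itemize}

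\textbf{Derivatives.} Compute them directly:
\[\partial_sF=-\frac{s-2\gamma w}{4w}\ \Rightarrow\ \partial_sF(1,\theta)=-\frac{1-2\gamma\theta}{4\theta},\qquad \partial_wF=\frac{1-4\gamma^2w^2}{8w^2}\ \Rightarrow\ \partial_wF(1,\theta)=\frac{1-4\gamma^2\theta^2}{8\theta^2}=\overline{c}.\]
Consequently $A_z/A_x$ contains the factor $\exp\bigl(t_0\tfrac{1-2\gamma\theta}{4\theta}-y_0\,\overline{c}\bigr)$, which matches $h$.

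\textbf{Main obstacle.} The main difficulty is making the error terms uniform.
\begin{itemize}
\item \emph{Second-order remainder.} The Hessian of $F$ is homogeneous of degree $-1$, so it is $O(1/T)$ near the ray. The increment is $O(T^\beta)$, so the remainder is $O(T^{2\beta-1})\to0$ since $\beta<\frac12$.
\item \emph{Replacing derivatives at $(1,y/T)$ by those at $(1,\theta)$.} This costs $O(\abs y/T-\theta\abs)\cdot O(T^\beta)=O(T^{2\beta-1})\to0$, again because $\beta<\frac12$.
\end{itemize}
Both errors are uniform over $\abs y-\theta T\abs\leq KT^\beta$ and $0\leq y_0\leq T^\beta$. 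Absorbing them together with $\sqrt{T/T'}$ into $\delta$ finishes the proof.
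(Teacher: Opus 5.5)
Your proposal is correct and follows the paper's proof. Like you, the paper applies Theorem~\ref{thm:uniformasymptotics} to the numerator and the denominator after checking $|z|\leq (T/2)^{\beta/2}\leq (T-t_0)^{\beta/2}$ and $|y-y_0-\theta(T-t_0)|\leq M(T-t_0)^\beta$, and then shows that the leftover exponent is $O(T^{2\beta-1})$. The only difference is that the paper reaches this bound through an explicit algebraic expansion of $F(T,y)-F(T-t_0,y-y_0)$ into three terms bounded separately, whereas you get it from the degree-one homogeneity of your $F$ and a first-order Taylor expansion.
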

\begin{proof}
    We will only prove the lower bound, as it is the one we will need later on, the upper bound can be proven analogously.
    First, note that since $\beta<\frac12$ and $t_0$ is fixed, there exists a $T_0>0$ such that $(K+1)T^{\beta-1}\leq \frac{\theta}{2}$ and $T-t_0\geq \frac{T}{2}$ for all $T\geq T_0$. This implies
    \begin{align*}
        \abs y-y_0-\theta (T-t_0)\abs&\leq \abs y-\theta T\abs+ y_0+\theta t_0
        \leq 2^\beta(K+1)\left(\frac{T}{2}\right)^\beta +\theta t_0^{1-\beta} t_0^\beta\\
        &\leq 2^\beta (K+1)(T-t_0)^\beta + \theta t_0^{1-\beta}(T-t_0)^\beta
        =: M\,(T-t_0)^\beta,
    \end{align*}
    for all $T\geq T_0$. Further $\abs z\abs\leq \left(\frac{T}{2}\right)^{\beta/2}\leq(T-t_0)^{\beta/2}$.
    Hence, by Theorem \ref{thm:uniformasymptotics}, there exists a $T^{(1)}_{\delta,\beta,K}\geq T_0$ such that
    \begin{multline}\label{eq:sbprationumerator}
        \P_z(Z_{T-t_0}\leq y-y_0)\\
        \geq (1-\delta/3)\,C_\theta\,(T-t_0)^{-1/2}\,\exp\left(-\frac{1}{8\frac{y-y_0}{T-t_0}}\left(1-2\gamma\,\frac{y-y_0}{T-t_0}\right)^2(T-t_0)-z^2\,\frac{1-2\gamma\theta}{4\theta}\right),
    \end{multline}
    for all $T\geq T^{(1)}_{\delta,\beta,K}$ and any $z,y,y_0$ satisfying the assumptions of Lemma \ref{lem:ratiouniformasymptotics}, where $C_\theta$ is a constant not depending on $T$.
    Theorem \ref{thm:uniformasymptotics} also gives us a $T^{(2)}_{\delta,\beta,K}\geq T_0$ such that
    \begin{equation}\label{eq:sbpratiodenominator}
        \P_x(Z_T\leq y)
        \leq (1+\delta/3)\,C_\theta\,T^{-1/2}\,\exp\left(-\frac{1}{8\frac{y}{T}}\left(1-2\gamma \frac{y}{T}\right)^2T-x^2\,\frac{1-2\gamma\theta}{4\theta}\right),
    \end{equation}
    for all $T\geq T^{(2)}_{\delta,\beta,K}$ and any $y>0$ with $\abs y-\theta T \abs\leq KT^\beta$.
    Combining \eqref{eq:sbprationumerator} and \eqref{eq:sbpratiodenominator} therefore yields
    \begin{multline}
        \frac{\P_z(Z_{T-t_0}\leq y-y_0)}{\P_x(Z_T\leq y)}
        \geq \frac{1-\delta/3}{1+\delta/3}\,\left(\frac{T}{T-t_0}\right)^{1/2}\\
        \times\exp\left(-\frac{(T-t_0)^2}{8(y-y_0)}\left(1-2\gamma\,\frac{y-y_0}{T-t_0}\right)^2+\frac{T^2}{8y}\left(1-2\gamma\,\frac{y}{T}\right)^2+\left(x^2-z^2\right)\frac{1-2\gamma\theta}{4\theta}\right),
    \end{multline}
    for all $T\geq \max\{T^{(1)}_{\delta,\beta,K},T^{(2)}_{\delta,\beta,K}\}$ and any $z,y,y_0$ fulfilling the assumptions in the lemma.
    We now analyze the asymptotics of the right hand side of this inequality by first factoring out the right hand side of the claimed inequality and using $T/(T-t_0)\geq 1$:
    \begin{align}\label{eq:sbpratioboundexact}
        \frac{\P_z(Z_{T-t_0}\leq y-y_0)}{\P_x(Z_T\leq y)}
        \geq\frac{1-\delta/3}{1+\delta/3}\,\exp\left((x^2-z^2+t_0)\,\frac{1-2\gamma\theta}{4\theta}- y_0\,\frac{1-4\gamma^2\theta^2}{8\theta^2}\right)\times \exp(R),
    \end{align}
    where
    \begin{equation*}
        R:=-t_0\,\frac{1-2\gamma\theta}{4\theta}+y_0\,\frac{1-4\gamma^2\theta^2}{8\theta^2}-\frac{(T-t_0)^2}{8(y-y_0)}\left(1-2\gamma\,\frac{y-y_0}{T-t_0}\right)^2+\frac{T^2}{8y}\left(1-2\gamma\,\frac{y}{T}\right)^2.
    \end{equation*}
    If we can now show, that for some $T_{\delta,\beta,K}\geq \max\{T^{(1)}_{\delta,\beta,K},T^{(2)}_{\delta,\beta,K}\}$ 
    \begin{equation}\label{eq:exponentialasymptoticsbound}
        \exp(R)\geq 1-\delta/3,
    \end{equation}
    for all $T\geq T_{\delta,\beta,K}$, then the claim follows, as
    \begin{equation*}
        \frac{(1-\delta/3)^2}{1+\delta/3}=\frac{(1+\delta/3)^2-4\delta/3}{1+\delta/3}
        = 1+\delta/3-\frac{4\delta/3}{1+\delta/3}
        \geq 1+\delta/3-4\delta/3=1-\delta
    \end{equation*}
    and inserting into \eqref{eq:sbpratioboundexact} yields
    \begin{align}\label{eq:sbpratiobound}
        \frac{\P_z(Z_{T-t_0}\leq y-y_0)}{\P_x(Z_T\leq y)}
        &\geq\frac{1-\delta/3}{1+\delta/3}\,\exp\left((x^2-z^2+t_0)\,\frac{1-2\gamma\theta}{4\theta}- y_0\,\frac{1-4\gamma^2\theta^2}{8\theta^2}\right) (1-\delta/3)\nonumber\\
        &\geq (1-\delta)\,\exp\left((x^2-z^2+t_0)\,\frac{1-2\gamma\theta}{4\theta}- y_0\,\frac{1-4\gamma^2\theta^2}{8\theta^2}\right),
    \end{align}
    for all $T\geq T_{\delta,\beta,K}$ and any $z,y,y_0$ fulfilling the assumptions of the lemma.\\\\
    In order to show \eqref{eq:exponentialasymptoticsbound} we analyze $R$: Simple algebraic manipulations yield
    \begin{align}
        &-\frac{(T-t_0)^2}{8(y-y_0)}\left(1-2\gamma\,\frac{y-y_0}{T-t_0}\right)^2+\frac{T^2}{8y}\left(1-2\gamma\,\frac{y}{T}\right)^2\nonumber\\
        =~&-\frac{1}{8(y-y_0)}\left((T-2\gamma\,y)-(t_0-2\gamma\,y_0)\right)^2+\frac{1}{8y}\left(T-2\gamma\,y\right)^2\nonumber\\
        =~&-\frac{1}{8y(y-y_0)}\left(y\left((T-2\gamma\,y)-(t_0-2\gamma\,y_0)\right)^2-(y-y_0)\left(T-2\gamma\,y\right)^2\right)\nonumber\\
        =~&-\frac{1}{8y(y-y_0)}\left(-2yt_0T+4\gamma y^2t_0-4\gamma^2y^2y_0+yt_0^2-4\gamma yy_0t_0+4\gamma^2yy_0^2+y_0T^2\right)\nonumber\\
        =~&-\frac{1}{8y(y-y_0)}\left(-2yt_0T+4\gamma yt_0(y-y_0)-4\gamma^2yy_0(y-y_0)+yt_0^2+y_0T^2\right)\nonumber\\
        =~&-\frac{1}{8(y-y_0)}\left(-2t_0T+t_0^2\right)-\frac{\gamma}{2}t_0+\frac{\gamma^2}{2}y_0-\frac{y_0T^2}{8y(y-y_0)},
    \end{align}
    which implies
    \begin{align}\label{eq:expasymptoticsbound}
        \abs R\abs
        &=\left|-t_0\,\frac{1-2\gamma\theta}{4\theta}+y_0\,\frac{1-4\gamma^2\theta^2}{8\theta^2}-\frac{(T-t_0)^2}{8(y-y_0)}\left(1-2\gamma\,\frac{y-y_0}{T-t_0}\right)^2+\frac{T^2}{8y}\left(1-2\gamma\,\frac{y}{T}\right)^2\right|\nonumber\\
        &=\left|-t_0\,\frac{1-2\gamma\theta}{4\theta}+y_0\,\frac{1-4\gamma^2\theta^2}{8\theta^2}-\frac{1}{8(y-y_0)}\left(-2t_0T+t_0^2\right)-\frac{\gamma}{2}t_0+\frac{\gamma^2}{2}y_0-\frac{y_0T^2}{8y(y-y_0)}\right|\nonumber\\
        &=\left|\left(\frac{T}{4(y-y_0)}-\frac{1}{4\theta}\right)t_0+\left(\frac{1}{8\theta^2}-\frac{T^2}{8y(y-y_0)}\right)y_0-\frac{t_0^2}{8(y-y_0)}\right|\nonumber\\
        &\leq\left|\frac{T}{4(y-y_0)}-\frac{1}{4\theta}\right|t_0+\left|\frac{1}{8\theta^2}-\frac{T^2}{8y(y-y_0)}\right|y_0+\left|\frac{t_0^2}{8(y-y_0)}\right|.
    \end{align}
    We can bound each term in the last line separately, to do this we make the following observations using the definition of $T_0$: 
    Since $\abs y-y_0-\theta T\abs\leq (K+1)T^\beta$
    \begin{align}
        \frac{y-y_0}{T}\geq \theta-(K+1)T^{\beta-1}\geq \frac{\theta}{2}
    \end{align}
    for all $T\geq T_0$. Further, $y/T\geq (y-y_0)/T\geq \theta/2$ and $y/T\leq 3\theta/2$ for all $T\geq T_0$.
    Using these inequalities and the assumptions of the lemma we get the following three bounds:
    \begin{align}\label{eq:expasymptoticsbound1}
        \left|\frac{T}{4(y-y_0)}-\frac{1}{4\theta}\right|t_0
        &=\frac{t_0}{4\theta}\left|\frac{1}{y-y_0}\right|\left|y-y_0-\theta T\right|
        \leq \frac{t_0}{4\theta}\left|\frac{T}{y-y_0}\right|(K+1)T^{\beta-1}\nonumber\\
        &\leq \frac{t_0}{2\theta^2}(K+1)T^{\beta-1},
    \end{align}
    for the first summand in \eqref{eq:expasymptoticsbound} and for the second summand in \eqref{eq:expasymptoticsbound} we have
    \begin{align}\label{eq:expasymptoticsbound2}
        \left|\frac{1}{8\theta^2}-\frac{T^2}{8y(y-y_0)}\right|y_0
        &=\frac{y_0}{8\theta^2 \abs y(y-y_0)\abs}\left|y^2-yy_0-(\theta T)^2\right|\nonumber\\
        &\leq \frac{y_0}{8\theta^2 \abs y(y-y_0)\abs}\left(\left|y-\theta T\right|\left|y+\theta T\right|+\abs yy_0\abs\right)\nonumber\\
        &\leq \frac{y_0\,T^2}{8\theta^2 \abs y(y-y_0)\abs}\left(KT^{\beta-1}\left(\frac{y}{T}+\theta \right)+\frac{yy_0}{T^2}\right)\nonumber\\
        &\leq \frac{y_0}{2\theta^4}\left(KT^{\beta-1}\left(\frac{3\theta}{2}+\theta \right)+\frac{3\theta}{2} T^{\beta-1}\right)\nonumber\\
        &\leq\frac{5K+3}{4\theta^3}\,T^{2\beta-1},
    \end{align}
    for all $T\geq T_0$. Finally, we have, for all $T\geq T_0$,
    \begin{align}\label{eq:expasymptoticsbound3}
        \frac{t_0^2}{\abs 8(y-y_0)\abs}
        \leq \frac{t_0^2}{8}T^{-1}\frac{2}{\theta}
        =\frac{t_0^2}{4\theta}T^{-1}.
    \end{align}
    Inserting \eqref{eq:expasymptoticsbound1}, \eqref{eq:expasymptoticsbound2} and \eqref{eq:expasymptoticsbound3} into \eqref{eq:expasymptoticsbound} yields
    \begin{align}
        \abs R\abs
        &\leq\frac{t_0}{2\theta^2}(K+1)T^{\beta-1}+\frac{5K+3}{4\theta^3}\,T^{2\beta-1}+\frac{t_0^2}{4\theta}T^{-1}\nonumber\\
        &\leq\left(\frac{t_0}{2\theta^2}(K+1)+\frac{5K+3}{4\theta^3}+\frac{t_0^2}{4\theta}\right)T^{2\beta-1},
    \end{align}
    for $T\geq T_0$. Therefore, since $\beta<1/2$, there exists a $T_{\delta,\beta,K}\geq \max\{T^{(1)}_{\delta,\beta,K},T^{(2)}_{\delta,\beta,K}\}$ such that
    \begin{equation*}
        \abs R\abs\leq \frac{\delta}{6e}<\frac12,
    \end{equation*}
    for all $T\geq T_{\delta,\beta,K}$. Lemma \ref{lem:expTaylor} then yields
    \begin{equation*}
        \left|\exp\left(R\right)-1\right|\leq 2e\frac{\delta}{6e}=\frac{\delta}{3},
    \end{equation*}
    so that $\exp(R)\geq 1-\frac{\delta}{3}$
    for all $T\geq T_{\delta,\beta,K}$, which shows that the condition \eqref{eq:exponentialasymptoticsbound} is indeed satisfied.
    As argued in \eqref{eq:sbpratioboundexact} and \eqref{eq:sbpratiobound} this proves the claim.
\end{proof}

Before applying Lemma \ref{lem:ratiouniformasymptotics} to the expression in \eqref{eq:expectationofratiolem}, we first show another lemma, whose necessity will become apparent in the last step of the proof.

\begin{lem}\label{lem:girsanovdensitymartingale}
    Fix $x\in\R$ and define $M=(M_t)_{t\geq 0}$, with $h$ as in Lemma \ref{lem:ratiouniformasymptotics},
    \[M_t:=h\left(U_t,Z_t,t\right)=\exp\left(\left(x^2-U_t^2+t\right)\frac{1-2\gamma\theta}{4\theta}- Z_t\frac{1-4\gamma^2\theta^2}{8\theta^2}\right),\quad\forall t\geq 0.\]
    Then $M$ is a martingale and
    \[M_t=\exp\left(-\left(\frac{1}{2\theta}-\gamma\right)\int_0^tU_s\,d W_s-\frac12\left(\frac{1}{2\theta}-\gamma\right)^2Z_t\right),\quad\forall t\geq 0.\]
\end{lem}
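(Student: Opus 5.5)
The plan is to prove the stochastic-exponential representation first and then deduce the martingale property from Novikov's criterion or a direct argument. Set $\lambda:=\frac{1}{2\theta}-\gamma>0$ and note the identities
\[
\frac{1-2\gamma\theta}{4\theta}=\frac{\lambda}{2},\qquad \frac{1-4\gamma^2\theta^2}{8\theta^2}=\frac{\lambda}{2}\Bigl(\frac{1}{2\theta}+\gamma\Bigr)=\frac{1}{2}\bigl(\lambda^2+2\gamma\lambda\bigr).
\]
With these, $\log M_t=\frac{\lambda}{2}(x^2-U_t^2+t)-\frac12(\lambda^2+2\gamma\lambda)Z_t$.

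The first step applies Itô's formula to $U_t^2$ using $dU_t=-\gamma U_t\,dt+dW_t$. This gives
\[
U_t^2-x^2=2\int_0^tU_s\,dW_s-2\gamma Z_t+t .
\]
Hence $\frac{\lambda}{2}(x^2-U_t^2+t)=-\lambda\int_0^tU_s\,dW_s+\gamma\lambda Z_t$. Substituting this into $\log M_t$, the $\gamma\lambda Z_t$ terms cancel, leaving $-\lambda\int_0^t U_s\,dW_s-\frac{\lambda^2}{2}Z_t$, which is the claimed formula. This step is routine algebra.

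The second step establishes the martingale property. Since $M$ is the Doléans exponential $\mathcal E(-\lambda\int U\,dW)$, it is a positive local martingale and hence a supermartingale, so it suffices to show $\E_x[M_t]=1$ or a Novikov-type condition. The global Novikov condition $\E\exp(\frac{\lambda^2}{2}Z_t)<\infty$ may fail for large $t$, because $Z_t$ has only finite exponential moments up to some threshold. I would therefore use the standard localization in time: apply Novikov on short intervals $[t_k,t_{k+1}]$, where $\E\exp(\frac{\lambda^2}{2}\int_{t_k}^{t_{k+1}}U_s^2ds)<\infty$ holds for small mesh because $U$ is Gaussian with bounded variance on compacts. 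Chaining these intervals via conditional expectations and the Markov property yields $\E[M_t]=1$ (Karatzas–Shreve, Cor.\ 3.5.14 style).

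As an alternative route for this step, one can note that under the Girsanov-changed measure the process solves the SDE $d\tilde U=-(\gamma+\lambda)\tilde U\,dt+d\tilde W=-\frac{1}{2\theta}\tilde U\,dt+d\tilde W$, which is non-explosive. By Beneš's linear-growth criterion, the exponential is then a true martingale. The main obstacle is exactly this verification of the true martingale property, and I would cite Beneš's condition, since the drift change $-\lambda U_t$ is linear in the state.
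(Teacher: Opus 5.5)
Your proposal is correct and follows essentially the paper's route: Itô's formula for $U_t^2$ gives the Doléans-exponential form, and the martingale property comes from Novikov's criterion on time intervals of fixed short length, chained through the tower property to give $\E_x[M_t]=1$ for the positive supermartingale $M$ (the paper checks the needed exponential moment via Jensen, Fubini and the noncentral $\chi^2$ moment generating function, which is what lies behind your ``Gaussian with bounded variance'' remark). The paper does not use your Beneš alternative, but it is also valid, since $|U_t|\le |x|+2\sup_{s\le t}|W_s|$ gives the integrand $-\lambda U_t$ the required linear growth in the driving Brownian path.
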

\begin{proof}
    We show the claimed representation of $M$ first.
    For this purpose, let $Y^{(r)}=(Y^{(r)}_t)_{t\geq 0}$ with $Y^{(r)}_t:=-\left(\frac{1}{2\theta}-\gamma\right)\int_0^t \1_{s\geq r}U_s\,dW_s$ for all $r,t\geq 0$. 
    By Ito's Lemma, we have
    \begin{equation*}
        U_t^2-x^2=2\int_0^tU_s\,dU_s+\int_0^td\langle U\rangle_s
        =2\int_0^t-\gamma \,U_s^2\,ds+2\int_0^tU_s\,dW_s+\langle U\rangle_t
        =-2\gamma Z_t+2\int_0^tU_s\,dW_s+t.
    \end{equation*}
    Here we used that \eqref{def:SDE} for $U$ gives the formula $dU_s=-\gamma U_s\,ds+dW_s$ which also implies that the local martingale part of $U$'s semimartingale decomposition is $W$,
    and therefore $\langle U\rangle_t=\langle W\rangle_t=t$ for all $t\geq 0$.
    Substituting this into the definition of $M$ yields
    \begin{align}
        M_t&=\exp\left(\left(-2\int_0^tU_s\,dW_s+2\gamma Z_t\right)\frac{1-2\gamma\theta}{4\theta}- Z_t\frac{1-4\gamma^2\theta^2}{8\theta^2}\right)\nonumber\\
        &=\exp\left(-\frac{1-2\gamma\theta}{2\theta}\int_0^tU_s\,dW_s-Z_t\left(\frac{2\gamma^2\theta-\gamma}{2\theta}+\frac{1-4\gamma^2\theta^2}{8\theta^2}\right)\right)\nonumber\\
        &=\exp\left(-\left(\frac{1}{2\theta}-\gamma\right)\int_0^tU_s\,dW_s-\frac12 Z_t\left(\frac{4\gamma^2\theta^2-4\gamma\theta+1}{4\theta^2}\right)\right)\nonumber\\
        &=\exp\left(-\left(\frac{1}{2\theta}-\gamma\right)\int_0^tU_s\,dW_s-\frac12 Z_t\frac{(1-2\gamma\theta)^2}{4\theta^2}\right)\nonumber\\
        &=\exp\left(-\left(\frac{1}{2\theta}-\gamma\right)\int_0^tU_s\,dW_s-\frac12\left(\frac{1}{2\theta}-\gamma\right)^2Z_t\right),
    \end{align}
    for all $t\geq 0$.
    We notice that this representation of $M$ is the stochastic (Doléans-Dade-) exponential of $Y^{(0)}$:
    \begin{align}\label{eq:MisStochasticExp1}
        \exp\left(Y^{(0)}_t-\frac12\langle Y^{(0)}\rangle_t\right)
        &=\exp\left(-\left(\frac{1}{2\theta}-\gamma\right)\int_0^tU_s\,dW_s-\frac12\left(\frac{1}{2\theta}-\gamma\right)^2\int_0^tU_s^2\,ds\right)=M_t,
    \end{align}
    and for any $t\geq r\geq 0$ we analogously have
    \begin{align}\label{eq:MisStochasticExp2}
        \exp\left(Y^{(r)}_t-\frac12\langle Y^{(r)}\rangle_t\right)
        &=\exp\left(-\left(\frac{1}{2\theta}-\gamma\right)\int_r^t U_s\,dW_s-\frac12\left(\frac{1}{2\theta}-\gamma\right)^2\int_r^t U_s^2\,ds\right)
        =\frac{M_t}{M_r},
    \end{align}
    showing that $(M_t/M_r)_{t\geq r}$ is the stochastic exponential of $Y^{(r)}$ for $t\geq r$.
    With this observation, we can use Novikov's condition for martingality of stochastic exponentials.
    We note that $U_s\sim\NN\left(xe^{-\gamma s},\frac{1}{2\gamma}\left(
    1-e^{-2\gamma s}
    \right)\right)$ for all $s>0$.
    Together with Jensen's inequality and Fubini's theorem, for all $t>r\geq 0$ this yields
    \begin{align}\label{eq:NovikovsCondition}
        &\E\left[\exp\left(\frac12 \langle Y^{(r)}\rangle_t\right)\right]
        =\E\left[\exp\left(\frac12\left(\frac{1}{2\theta}-\gamma\right)^2\int_r^t U_s^2\,ds\right)\right]\nonumber\\
        =~&\E\left[\exp\left(\frac12\left(\frac{1}{2\theta}-\gamma\right)^2\int_0^1 (t-r)\,U_{s(t-r)+r}^2\,ds\right)\right]\nonumber\\
        \leq~& \E\left[\int_0^1\exp\left(\frac12\left(\frac{1}{2\theta}-\gamma\right)^2 (t-r)\,U_{s(t-r)+r}^2\right)\,ds\right]\nonumber\\
        =~&\frac{1}{t-r}\int_r^t\E\left[\exp\left(\frac12\left(\frac{1}{2\theta}-\gamma\right)^2 (t-r)\,U_{s}^2\right)\right]\,ds\nonumber\\
        =~&\frac{1}{t-r}\int_r^t \left(1-\left(\frac{1}{2\theta}-\gamma\right)^2\frac{1-e^{-2\gamma s}}{2\gamma}\,(t-r)\right)^{-\frac12}\exp\left(\frac{x^2\left(\frac{1}{2\theta}-\gamma\right)^2\,e^{-2\gamma s}\,(t-r)}{2-\left(\frac{1}{2\theta}-\gamma\right)^2\frac{1-e^{-2\gamma s}}{\gamma}\,(t-r)}\right)\,ds\nonumber\\
        \leq~& \frac{1}{t-r}\int_r^t \left(1-\left(\frac{1}{2\theta}-\gamma\right)^2\frac{t-r}{2\gamma}\right)^{-\frac12}\,\exp\left(\frac{x^2\left(\frac{1}{2\theta}-\gamma\right)^2\,(t-r)}{2-\left(\frac{1}{2\theta}-\gamma\right)^2\frac{t-r}{\gamma}}\right)\,ds\nonumber\\
        =~&\left(1-\left(\frac{1}{2\theta}-\gamma\right)^2\frac{t-r}{2\gamma}\right)^{-\frac12}\,\exp\left(\frac{x^2\left(\frac{1}{2\theta}-\gamma\right)^2\,(t-r)}{2-\left(\frac{1}{2\theta}-\gamma\right)^2\frac{t-r}{\gamma}}\right)<\infty,
    \end{align}
    for $t-r\leq\lambda$, where 
    \begin{equation*}
        \lambda:= \frac{\gamma}{\left(\frac{1}{2\theta}-\gamma\right)^2}<\frac{2\gamma}{\left(\frac{1}{2\theta}-\gamma\right)^2\left(1-e^{-2\gamma s}\right)}\quad \forall s>0.
    \end{equation*} 
    The second inequality here justifies the evaluation of the expectation in the fifth step in \eqref{eq:NovikovsCondition} using Lemma \ref{lem:generalchi^2mgf}.     This shows that for all $r\geq 0$, $Y^{(r)}$ satisfies Novikov's condition on $[r,r+\lambda]$ and therefore, together with \eqref{eq:MisStochasticExp2}, that $(M_t/M_r)_{t\in[r,r+\lambda]}$ is a martingale.
    As the stochastic exponential of a local martingale (which $Y^{(0)}$ is) is itself a non-negative local martingale, $M$ is a supermartingale.
    Therefore it now suffices to show that $M$ has constant expectation.
    Note that for any $t\in[0,\lambda]$, constant expectation follows directly from martingality of $(M_t)_{t\in[0,\lambda]}$.
    For $t\in[n\lambda,(n+1)\lambda]$, since $M$ is adapted to $\FF$ and $(M_t/M_{n\lambda})_{t\in[n\lambda,(n+1)\lambda]}$ is a martingale, we have, 
    \begin{align*}
        \E\left[M_t\right]
        =\E\left[\E\left[M_{n\lambda}\frac{M_t}{M_{n\lambda}}\bigg\vert \FF_{n\lambda}\right]\right]
        =\E\left[M_{n\lambda}\,\E\left[\frac{M_t}{M_{n\lambda}}\bigg\vert~\FF_{n\lambda}\right]\right]
        =\E\left[M_{n\lambda} \frac{M_{n\lambda}}{M_{n\lambda}}\right]
        =\E\left[M_{n\lambda}\right].
    \end{align*}
     This inductively shows, that $\E[M_t]=\E[M_0]=1$ for all $t\geq 0$, which as argued above shows the claim.
\end{proof}

We will now use Lemma \ref{lem:markovProperty} and Lemma \ref{lem:ratiouniformasymptotics} to examine the conditioned law of $U$ on a finite time interval.

\begin{lem}\label{lem:limfiniteinterval}
    Let $x\in\R$, $t_0>0$ and take $h$ as defined in Lemma \ref{lem:ratiouniformasymptotics}. Assume there is a $K>0$ and $\beta\in(0,1/2)$ such that for the sequence $y_T$ we have $|y_T-\theta T|\leq K T^\beta$. Then, for any Borel-measurable $D\subset\CC([0,t_0])$, we have
    \begin{equation}\label{eq:limfinalestimate}
        \lim_{T\to\infty}\P_x(U\in D~\vert~Z_T\leq y_T)=\E_x\left[\1_{U\in D}\,h(U_{t_0}, Z_{t_0}, t_0)\right].
    \end{equation}
\end{lem}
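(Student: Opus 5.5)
By Lemma~\ref{lem:markovProperty}, for $T\geq t_0$ we have
\[
\P_x(U\in D\mid Z_T\leq y_T)=\E_x\left[\1_{U\in D}\,Q_T\right],\qquad Q_T:=\frac{\P_{U_{t_0}}\left(Z_{T-t_0}\leq y_T-Z_{t_0}\right)}{\P_x(Z_T\leq y_T)}.
\]
The plan is to show that $Q_T\to h(U_{t_0},Z_{t_0},t_0)$ almost surely, and then justify exchanging the limit and the expectation.

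\emph{Pointwise convergence.} Fix $\omega$. Then $U_{t_0}(\omega)$ and $Z_{t_0}(\omega)$ are fixed finite numbers, so for $T$ large we have $|U_{t_0}|\leq 2^{-\beta/2}T^{\beta/2}$ and $0\leq Z_{t_0}\leq T^\beta$. Applying Lemma~\ref{lem:ratiouniformasymptotics} with $z=U_{t_0}$, $y_0=Z_{t_0}$ and $y=y_T$, for every $\delta\in(0,1)$ we get
\[
(1-\delta)h\leq Q_T\leq(1+\delta)h
\]
for all $T\geq T_{\delta,\beta,K}$. Here the threshold is uniform in $(z,y_0)$, and only the entry condition $T\ge\max\{T_{\delta,\beta,K},\,(2^{1/2}|U_{t_0}|)^{2/\beta},\,Z_{t_0}^{1/\beta}\}$ depends on $\omega$. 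Hence $Q_T\to h(U_{t_0},Z_{t_0},t_0)$ almost surely, and so $\1_D Q_T\to\1_D h$ almost surely.

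\emph{Exchange of limit and expectation.} I would use the generalized (Vitali/Scheffé) form of dominated convergence. Taking $D=\CC([0,t_0])$ in Lemma~\ref{lem:markovProperty} gives $\E_x[Q_T]=1$ for every $T$. By Lemma~\ref{lem:girsanovdensitymartingale}, $h(U_{t_0},Z_{t_0},t_0)=M_{t_0}$ satisfies $\E_x[M_{t_0}]=1$. So the nonnegative variables $Q_T$ converge almost surely to $M_{t_0}$ and their expectations converge to $\E_x[M_{t_0}]$. Scheffé's lemma then yields $Q_T\to M_{t_0}$ in $L^1$. Consequently $|\E_x[\1_D Q_T]-\E_x[\1_D M_{t_0}]|\le\E_x|Q_T-M_{t_0}|\to0$, which proves \eqref{eq:limfinalestimate} for every Borel $D$ simultaneously.

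\emph{Main obstacle.} The delicate point is this exchange of limit and expectation. A direct domination of $Q_T$ is awkward: the ratio bound holds only on the $T$-dependent region where $|U_{t_0}|\lesssim T^{\beta/2}$ and $Z_{t_0}\le T^\beta$, and outside it the numerator is controlled only crudely by $1$, while the denominator decays exponentially. The martingale property of $M$ from Lemma~\ref{lem:girsanovdensitymartingale} is exactly what removes this difficulty, because it identifies the limit's expectation as $1$ and thus lets Scheffé's lemma replace any need for a dominating function. One must also check that the pointwise application of Lemma~\ref{lem:ratiouniformasymptotics} is legitimate, i.e.\ that the lemma's constant does not depend on $z,y_0$; the lemma is stated uniformly over $z,y_0$, so this holds.
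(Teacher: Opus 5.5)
Your proof is correct. It uses the same three ingredients as the paper: Lemma~\ref{lem:markovProperty}, Lemma~\ref{lem:ratiouniformasymptotics} applied on a region of $(U_{t_0},Z_{t_0})$ that grows with $T$, and $\E_x[M_{t_0}]=1$ from Lemma~\ref{lem:girsanovdensitymartingale} to rule out loss of mass. Your diagnosis of the main obstacle is exactly right.

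The difference is in how the pieces are assembled.

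\textbf{The paper's route.} It truncates to the sets $G_m$, on which the ratio bound holds for all $T\ge m$. From there it proceeds in four steps:
\begin{itemize}
\item[(i)] it derives $\liminf_T\P_x(U\in D\mid Z_T\le y_T)\ge(1-\delta)\E_x[\1_{U\in D\cap G_m}h]$;
\item[(ii)] it lets $m\to\infty$ by monotone convergence;
\item[(iii)] it lets $\delta\to0$;
\item[(iv)] it gets the matching $\limsup$ by applying the same bound to $D^c$ and using $\E_x[h(U_{t_0},Z_{t_0},t_0)]=1$.
\end{itemize}
This is in effect a hand-made Fatou-plus-Scheffé argument, carried out set by set.

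\textbf{What your route buys.} It is shorter, since one standard lemma replaces the truncation, monotone convergence and complement steps. It also gives a stronger conclusion: $\E_x|Q_T-M_{t_0}|\to0$, i.e.\ convergence uniformly over all Borel $D\subset\CC([0,t_0])$ (total variation), rather than setwise convergence.

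\textbf{What the paper's route buys.} It only needs the \emph{lower} half of Lemma~\ref{lem:ratiouniformasymptotics}, which is the only half the paper actually proves (the upper half is declared analogous). Your pointwise convergence $Q_T\to M_{t_0}$ uses both halves.

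You can remove this dependence without changing your approach. The lower bound alone gives $\liminf_T Q_T\ge M_{t_0}$ pointwise. Hence $(M_{t_0}-Q_T)^+\to0$, with domination by $M_{t_0}$, so $\E_x[(M_{t_0}-Q_T)^+]\to0$. Since $\E_x[Q_T]=\E_x[M_{t_0}]=1$, it follows that $\E_x|Q_T-M_{t_0}|=2\,\E_x[(M_{t_0}-Q_T)^+]\to0$.

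A harmless detail: your entry condition $T\ge(2^{1/2}|U_{t_0}|)^{2/\beta}$ is stronger than needed; $T\ge 2|U_{t_0}|^{2/\beta}$ already gives $|U_{t_0}|\le 2^{-\beta/2}T^{\beta/2}$.
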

\begin{proof}
    Before applying Lemma \ref{lem:ratiouniformasymptotics} to the ratio in Lemma \ref{lem:markovProperty}, we need to make sure that the conditions of the lemma are satisfied uniformly.
    For this purpose, we define for a fixed $\beta\in(0,1/2)$ and $t_0>0$
    \begin{equation}\label{def:Gm}
        G_m:=\left\{f\in\CC\left([0,t_0]\right): \abs f_{t_0}\abs\leq \left(\frac{m}{2}\right)^\frac{\beta}{2},~~\int_0^{t_0}f_s^2\,ds\leq m^\beta\right\},~~m\in\N.
    \end{equation}
    With this definition, we have $\{U\in G_m\}\in\FF_{t_0}$ for any $m\in\N$ and for $U\in G_m$
    \begin{equation}
        \abs U_{t_0}\abs\leq\left(\frac{T}{2}\right)^\frac{\beta}{2},\quad \int_0^{t_0}U_s^2\,ds\leq T^\beta\quad\text{for any}~~T\geq m.
    \end{equation}
    In other words, we can apply the estimates of Lemma \ref{lem:ratiouniformasymptotics} if we restrict to $\{U\in G_m\}$:
    With $\delta\in(0,1)$, and $T_{\delta,\beta,K}>0$ as given by Lemma \ref{lem:ratiouniformasymptotics}, using Lemma \ref{lem:markovProperty} on $D\cap G_m$ we have
    \begin{align}
        &\P_x(U\in D~\vert~Z_T\leq y_T)\geq\P_x(U\in D\cap G_m~\vert~Z_T\leq y_T)\nonumber\\
        =~&\E_x\left[\1_{U\in D\cap G_m}~\frac{\P_{U_{t_0}}\left(Z_{T-t_0}\leq y_T-Z_{t_0}\right)}{\P_x(Z_T\leq y_T)}\right]
        \geq (1-\delta)\E_x\left[\1_{U\in D\cap G_m}\,h(U_{t_0}, Z_{t_0}, t_0)\right],
    \end{align}
    for all $T\geq\max\{T_{\delta,\beta,K}, m\}$ and any $m\in\N$.
    Since the right-hand side does not depend on $T$ anymore,
    \begin{equation}
        \liminf_{T\to\infty}\P_x(U\in D~\vert~Z_T\leq y_T)\geq (1-\delta)\E_x\left[\1_{U\in D\cap G_m}\,h(U_{t_0}, Z_{t_0}, t_0)\right],
    \end{equation}
    for any $m\in\N$. 
    Next, we note that any continuous path will eventually fulfill the conditions of $G_m$ for $m$ large enough, which implies $\bigcup_{m\in\N}G_m=\CC([0,t_0])$, and together with the non-negativity of $h$, this all implies
    \begin{equation*}
        \1_{U\in D\cap G_m}\,h(U_{t_0}, Z_{t_0}, t_0)\quad\overset{m\to\infty}{\longrightarrow}\quad \1_{U\in D}\,h(U_{t_0}, Z_{t_0}, t_0),
    \end{equation*}
    pointwise and monotonically increasing. Therefore, the monotone convergence theorem yields
    \begin{align}
        \liminf_{T\to\infty}\P_x(U\in D~\vert~Z_T\leq y_T)&\geq (1-\delta)\lim_{m\to\infty}\E_x\left[\1_{U\in D\cap G_m}\,h(U_{t_0}, Z_{t_0}, t_0)\right]\nonumber\\
        &=(1-\delta)\,\E_x\left[\1_{U\in D}\,h(U_{t_0}, Z_{t_0}, t_0)\right].
    \end{align}
    Now, letting $\delta\to 0$ yields
    \begin{equation}\label{eq:liminffinalestimate}
        \liminf_{T\to\infty}\P_x(U\in D~\vert~Z_T\leq y_T)\geq\E_x\left[\1_{U\in D}\,h(U_{t_0}, Z_{t_0}, t_0)\right].
    \end{equation}
    Since this holds for any Borel measurable set, it also holds for $D^c\in\BB(\CC([0,t_0]))$ and we get
    \begin{align}\label{eq:limsupfinalestimate}
        \limsup_{T\to\infty}\P_x(U\in D~\vert~Z_T\leq y_T)
        &=1-\liminf_{T\to\infty}\P_x(U\in D^c~\vert~Z_T\leq y_T)\nonumber\\
        &\leq 1-\E_x\left[\1_{U\in D^c}\,h(U_{t_0}, Z_{t_0}, t_0)\right]\nonumber\\
        &=1-\E_x\left[h(U_{t_0}, Z_{t_0}, t_0)\right]+\E_x\left[\1_{U\in D}\,h(U_{t_0}, Z_{t_0}, t_0)\right]\nonumber\\
        &=\E_x\left[\1_{U\in D}\,h(U_{t_0}, Z_{t_0}, t_0)\right].
    \end{align}
    In the last step we used that by Lemma \ref{lem:girsanovdensitymartingale} $\left(h(U_{t}, Z_{t}, t)\right)_{t\in[0,t_0]}$ is a martingale and therefore has constant expectation
    \begin{equation*}
    \E_x\left[h(U_{t_0}, Z_{t_0}, t_0)\right]=\E_x\left[h(U_{0}, Z_{0}, 0)\right]=\E_x\left[h(x, 0, 0)\right]=1.
\end{equation*}
    Combining \eqref{eq:liminffinalestimate} and \eqref{eq:limsupfinalestimate} then yields the claim.
\end{proof}

To finalize the proof of Theorem~\ref{thm:main}, take any $t_0>0$ and $x\in\R$.
Since, by Lemma \ref{lem:girsanovdensitymartingale},  $\left(h(U_{t}, Z_{t}, t)\right)_{t\in[0,t_0]}$ is a martingale, by Girsanov's theorem, $h(U_{t_0}, Z_{t_0}, t_0)$ is the density of some probability measure $\Q$ with respect to $\P$ on $\FF_{t_0}$.
Using this, we can reformulate the result of Lemma \ref{lem:limfiniteinterval}:
\begin{equation}\label{eq:weakConvergencePtoQ}
    \lim_{T\to\infty}\P_x(U\in D~\vert~Z_T\leq y_T)
    =\E_x\left[\1_{U\in D}\,h(U_{t_0}, Z_{t_0}, t_0)\right]
    =\E_x^\Q\left[\1_{U\in D}\right]
    =\Q_x(U\in D),
\end{equation}
for any $D\in\BB\left(\CC([0,t_0])\right)$.
Girsanov's theorem further yields that
\begin{equation*}
    \tilde{W}_t:=W_t-\langle W_\cdot, -\left(\frac{1}{2\theta}-\gamma\right)\int_0^\cdot U_s\,dW_s\rangle_t
    =W_t-\left(-\left(\frac{1}{2\theta}-\gamma\right)\int_0^t U_s\,ds\right),\quad t\in[0,t_0],
\end{equation*}
is a Brownian motion on $[0,t_0]$ with respect to $\Q$, as $h(U_{t}, Z_{t}, t)$ is the stochastic exponential of $-(1/2\theta-\gamma)\int_0^tU_s\,dW_s$ (cf. \eqref{eq:MisStochasticExp1}).
This allows us to examine which distribution $U$ has under $\Q$:
\begin{align}
    U_t-x&= -\gamma\int_0^tU_s\,ds + W_t
    =-\frac{1}{2\theta}\int_0^tU_s\,ds+ \left(\frac{1}{2\theta}-\gamma\right)\int_0^tU_s\,ds+W_t\nonumber\\
    &=-\frac{1}{2\theta}\int_0^tU_s\,ds+\tilde{W}_t, 
\end{align}
for all $t\in[0,t_0]$.
Since $\tilde{W}$ is a Brownian motion on $[0,t_0]$ under $\Q$, this shows that, with respect to $\Q$, $U$ is a solution to the stochastic differential equation
\begin{equation*}
    dX_t=-\frac{1}{2\theta}X_tdt+dB_t,\quad t\in[0,t_0],\quad\text{with}~~X_0=x,
\end{equation*}
where $B$ is some Brownian motion.
Since this is the stochastic differential equation defining an Ornstein--Uhlenbeck process with parameter $\frac{1}{2\theta}$ started in $x$,
\eqref{eq:weakConvergencePtoQ} shows the weak convergence of the conditioned measures to the distribution of said Ornstein--Uhlenbeck process on $\CC([0,t_0])$.
Since we have shown this for arbitrary $t_0>0$, Theorem 5 in \cite{wcWhitt} then implies that the same convergence also holds on $C([0,\infty))$, which was the claimed result.

\section*{Auxiliary bounds and identities}
Here, we collect some (in-)equalities that are regularly used.
\begin{lem}\label{lem:reg}
    Let $u=c+iv$ with $c>0$, $v\in \R$ and $g(u)=\left(1+\frac{2}{\gamma^2}u\right)^{1/2}$ (principal branch of the complex square root). We collect some useful bounds:
    \begin{enumerate}[(a)]
        \item $\Re g(u)\geq g(c)>1$,
        \item $\Re g(u)>\frac{1}{\sqrt{2}}\abs g(u)\abs$,
        \item $\Re g(u)>\frac{\sqrt{\abs v\abs}}{\gamma}$,
        \item $\Re g(u)\leq \frac{\sqrt{v}}{\gamma}\left(1+\frac{\gamma^2+2c}{r}\right)^{1/2}$ \quad for $v\geq r>0$,
        \item $\Re \frac{1}{g(u)}\geq0$,
        \item $\abs1-g(c+re^{it})\abs\leq\frac{1}{\gamma^2}(c+r)$\quad for $0<r\leq c$ and $t\in[0,2\pi]$. 
    \end{enumerate}
\end{lem}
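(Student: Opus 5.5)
Write $w:=1+\frac{2}{\gamma^2}u = a+ib$ with $a=1+\frac{2c}{\gamma^2}>1$ and $b=\frac{2v}{\gamma^2}$. Everything will follow from the explicit principal-branch formulas
\[
\Re\sqrt{w}=\sqrt{\tfrac{|w|+a}{2}},\qquad |\Im\sqrt{w}|=\sqrt{\tfrac{|w|-a}{2}},\qquad |w|=\sqrt{a^2+b^2}.
\]
Since $a>0$, the point $w$ lies in the open right half-plane, so $\arg g(u)\in(-\pi/4,\pi/4)$. I will derive each item (a)--(f) as an elementary consequence of these formulas, in order.

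\emph{Items (a), (b), (e).} For (a), $|w|\geq a$ gives $\Re g(u)\geq\sqrt{a}=g(c)$, and $a>1$ gives $g(c)>1$. For (b), the bound $|\arg g(u)|<\pi/4$ gives $\Re g>\cos(\pi/4)|g|$; concretely, $(\Re g)^2-(\Im g)^2=a>0$. For (e), $\Re\frac1g=\frac{\Re g}{|g|^2}>0$.

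\emph{Item (c).} Here $\Re g(u)^2=\frac{|w|+a}{2}>\frac{|b|}{2}=\frac{|v|}{\gamma^2}$, using $|w|\geq|b|$ and $a>0$. Taking square roots gives the claim.

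\emph{Item (d).} For $v\geq r>0$,
\[
\Re g^2\leq |w|\leq a+|b|=1+\tfrac{2c}{\gamma^2}+\tfrac{2v}{\gamma^2}=\tfrac{v}{\gamma^2}\Bigl(2+\tfrac{\gamma^2+2c}{v}\Bigr).
\]
This has an extra factor $2$ compared with the stated bound. To remove it I use the sharper estimate $\Re g^2=\frac{|w|+a}{2}\leq\frac{2a+|b|}{2}=a+\frac{v}{\gamma^2}$, which yields $\frac{v}{\gamma^2}\bigl(1+\frac{\gamma^2+2c}{v}\bigr)\leq\frac{v}{\gamma^2}\bigl(1+\frac{\gamma^2+2c}{r}\bigr)$, exactly the claimed bound.

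\emph{Item (f).} Set $u=c+re^{it}$ with $r\leq c$, so that $\Re u\geq0$ and hence $\Re g(u)\geq1$ by the same formula, whence $|1+g(u)|\geq2$. Then
\[
|1-g(u)|=\frac{|1-g(u)^2|}{|1+g(u)|}=\frac{\frac{2}{\gamma^2}|u|}{|1+g(u)|}\leq\frac{1}{\gamma^2}|u|\leq\frac{1}{\gamma^2}(c+r).
\]
The only delicate point is that when $r=c$ the circle touches $u=0$, where $g=1$. The bound still holds there, since the principal branch remains continuous on the closed half-plane $\Re u\geq0$.

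I expect no real obstacle. The only care needed is keeping the constant in (d) sharp and handling the boundary case in (f).
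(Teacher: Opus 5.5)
Your proof is correct and follows essentially the same route as the paper: every item is read off from the closed form $\Re g(u)=\sqrt{(|w|+\Re w)/2}$ with $w=1+\frac{2}{\gamma^2}u$. As in the paper, (d) uses the triangle inequality in the sharper form $(\Re g)^2\le a+\frac{v}{\gamma^2}$, and (f) uses $1-g=(1-g^2)/(1+g)$ together with $\Re g\ge 1$. Your concern about $r=c$ in (f) is unnecessary, since $\Re w\ge 1$ on the whole circle and the identity applies directly even at $u=0$.
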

\begin{proof}
    From the classical formula for the real part of the complex square root, we have
    \begin{equation}\label{eq:reg}
      \Re g(u)=\sqrt{\frac{\abs 1+\frac{2}{\gamma^2}u\abs+1+\frac{2}{\gamma^2}c}{2}}. 
    \end{equation}
    The first inequality follows from $\abs 1+\frac{2}{\gamma^2}(c+iv)\abs\geq\abs 1+\frac{2}{\gamma^2}c\abs$ and $c>0$,
    as this implies
    \[\Re g(u)\geq\sqrt{1+\frac{2}{\gamma^2}c}= g(c)\quad\text{and}\quad g(c)= \sqrt{1+\frac{2}{\gamma^2}c}>1.\]
    Since $c>0$, we also have
    \[\Re g(u)>\sqrt{\frac{\abs 1+\frac{2}{\gamma^2}u\abs}{2}}=\frac{\abs (1+\frac{2}{\gamma^2}u)^\frac12\abs}{\sqrt{2}}=\frac{\abs g(u)\abs}{\sqrt{2}}\]
    as well as
    \[\Re g(u)>\sqrt{\frac{\abs 1+\frac{2}{\gamma^2}(c+iv)\abs}{2}}>\sqrt{\frac{\abs \frac{2}{\gamma
    ^2}v\abs}{2}}=\frac{\sqrt{\abs v\abs}}{\gamma}.\]
    The other direction of this bound is a bit more involved. By the triangle inequality, \\$\abs 1+\frac{2}{\gamma^2}(c+iv)\abs\leq 1+\frac{2}{\gamma^2}(c+v)$ for $v>0$. This implies
    \begin{align*}
        \Re g(u)&\leq \left(\frac{1+\frac{2}{\gamma^2}(c+v)+1+\frac{2}{\gamma^2}c}{2}\right)^{1/2}
        =\left(1+\frac{2}{\gamma^2}c+\frac{1}{\gamma^2}v\right)^{1/2}\\
        &=\frac{\sqrt{v}}{\gamma}\left(1+\frac{2c+\gamma^2}{v}\right)^{1/2}
        \leq\frac{\sqrt{v}}{\gamma}\left(1+\frac{2c+\gamma^2}{r}\right)^{1/2}
    \end{align*}
    for $v\geq r>0$.
    The classical formula for the real part of a complex fraction yields
    \[\Re \frac{1}{g(u)}=\frac{\Re g(u)}{\abs g\abs^2}>\frac{1}{\abs g\abs^2}\geq0.\]
    
    The last claim follows from
    \begin{align*}
        \left|1-g(c+re^{it})\right|=\left|\frac{1-g(c+re^{it})^2}{1+g(c+re^{it})}\right|
        =\frac{\frac{2}{\gamma^2}\left|c+re^{it}\right|}{\left|1+g(c+re^{it})\right|}
        \leq \frac{\frac{2}{\gamma^2}(c+r)}{1+\Re g(c+re^{it})}\leq\frac{1}{\gamma^2}(c+r)
    \end{align*}
    as $\Re (c+re^{it})\geq 0$ implies $\Re g(c+re^{it})\geq 1$. This follows from the same idea as the proof of part (a)
    with $c'=c+r\cos(t)\geq 0$ (as $0<r\leq c$) and $v'=r\sin(t)$:
    \[\Re g(c+re^{it})=\Re g(c'+iv')\geq \sqrt{1+\frac{2c'}{\gamma^2}}\geq 1. \qedhere \]
\end{proof}
\begin{lem}\label{lem:regimcomponent}
    With $g$ defined as in Lemma \ref{lem:reg}, for any $c>0$ and $w>0$, there exists a $C_{c,w}>0$ only depending on $c,w$ such that for all $v\geq w>0$
    \[\Re g(c+iv)-\Re g(c+iw)\geq C_w(\sqrt{v}-\sqrt{w}).\]
\end{lem}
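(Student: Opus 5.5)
The plan is to work directly with the explicit formula \eqref{eq:reg} for the real part,
\[
\Re g(c+iv)=\sqrt{\tfrac{1}{2}\bigl(\rho(v)+a\bigr)},\qquad a:=1+\tfrac{2c}{\gamma^2},\quad \rho(v):=\Bigl|a+\tfrac{2}{\gamma^2}iv\Bigr|=\sqrt{a^2+\tfrac{4v^2}{\gamma^4}},
\]
and compare the function $\phi(v):=\Re g(c+iv)$ with $\sqrt v$ through their derivatives on $[w,\infty)$. The aim is a bound $\phi'(s)\ge 2C_{c,w}\cdot\frac{1}{2\sqrt s}$ for all $s\ge w$. Integrating this from $w$ to $v$ then gives $\phi(v)-\phi(w)\ge C_{c,w}(\sqrt v-\sqrt w)$, which is the claim. (The constant written $C_w$ in the statement is this $C_{c,w}$.)

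The derivative is computed by the chain rule. Since $\rho'(s)=\frac{4s}{\gamma^4\rho(s)}$, we get
\[
\phi'(s)=\frac{\rho'(s)}{4\phi(s)}=\frac{s}{\gamma^4\rho(s)\phi(s)}.
\]
Two upper bounds are needed for $s\ge w$. First, $\rho(s)\le a+\frac{2s}{\gamma^2}$. Second, Lemma~\ref{lem:reg}(d) applied with $r=w$ gives $\phi(s)\le \frac{\sqrt s}{\gamma}\bigl(1+\frac{\gamma^2+2c}{w}\bigr)^{1/2}$. Note also that $a\le \frac{a}{w}s$ for $s\ge w$, so
\[
\rho(s)\le s\Bigl(\tfrac{a}{w}+\tfrac{2}{\gamma^2}\Bigr).
\]
Inserting both bounds yields
\[
\phi'(s)\ge \frac{s}{\gamma^4\, s\,\bigl(\frac{a}{w}+\frac{2}{\gamma^2}\bigr)\,\frac{\sqrt s}{\gamma}\bigl(1+\frac{\gamma^2+2c}{w}\bigr)^{1/2}}=\frac{\kappa_{c,w}}{\sqrt s}.
\]
Here $\kappa_{c,w}>0$ depends only on $c,w$ (and on the fixed $\gamma$). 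Setting $C_{c,w}:=2\kappa_{c,w}$, the integration step $\int_w^v \frac{\kappa}{\sqrt s}\,ds=2\kappa(\sqrt v-\sqrt w)$ finishes the proof.

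The main point needing care is that $\phi'$ decays like $s^{-1/2}$ uniformly on the whole unbounded range $s\ge w$. Near $s=w$, the lower bound must not degenerate; this is why absorbing the constant $a$ into a multiple of $s$ requires $s\ge w>0$, and it is the reason the constant depends on $w$. For large $s$, the bound must have the right order, and here Lemma~\ref{lem:reg}(d) supplies exactly the $\sqrt s$ growth of $\phi$. Smoothness of $\phi$ is clear because $\rho>0$, so the fundamental theorem of calculus applies without issue.
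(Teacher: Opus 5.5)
Your proof is correct, and it takes a different route from the paper's.

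The paper works with finite differences. It writes $\Re g(c+iv)-\Re g(c+iw)$ as a quotient of differences of squares and rationalizes twice: first the real parts, then the moduli $|1+\frac{2}{\gamma^2}(c+iv)|$ and $|1+\frac{2}{\gamma^2}(c+iw)|$. It then bounds the two resulting sums from above by constant multiples of $\sqrt v$ and $v$, using Lemma~\ref{lem:reg}(d) and the triangle inequality. It finishes with the elementary inequality $\sqrt v-w^2v^{-3/2}\ge\sqrt v-\sqrt w$.

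You instead bound the derivative $\phi'(s)=s/(\gamma^4\rho(s)\phi(s))$ pointwise from below by $\kappa_{c,w}s^{-1/2}$ and integrate. This is the infinitesimal version of the paper's double rationalization and uses the same two upper bounds. Because you apply them at a single point $s$, you never need to bound $\Re g(c+iw)$ by a multiple of $\sqrt v$, as the paper does. Your final comparison with $\sqrt v-\sqrt w$ is also an exact integration of $s^{-1/2}$ rather than an extra inequality.

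The paper's argument is purely algebraic and needs no differentiability. Yours is shorter and makes the $s^{-1/2}$ decay of $\phi'$, and the role of $s\ge w$ in absorbing constants, transparent.

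One cosmetic slip: your stated aim $\phi'(s)\ge 2C_{c,w}\cdot\frac{1}{2\sqrt s}$ does not match what you prove. You actually prove $\phi'(s)\ge\kappa_{c,w}/\sqrt s=C_{c,w}\cdot\frac{1}{2\sqrt s}$ with $C_{c,w}=2\kappa_{c,w}$. Your final assignment is the consistent one, and any positive constant suffices anyway.
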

\begin{proof}
    By Lemma \ref{lem:reg}(d), 
    \begin{align*}
        \left(\Re g(c+iv)+\Re g(c+iw)\right)
        &\leq \left(\frac{\sqrt{v}}{\gamma}\left(1+\frac{\gamma^2+2c}{v}\right)^{1/2}+\frac{\gamma\,\Re g(c+iw)}{\sqrt{v}}\frac{\sqrt{v}}{\gamma}\right)\\
        &\leq \frac{1}{\gamma}\sqrt{v}\left(\left(1+\frac{\gamma^2+2c}{w}\right)^{1/2}+\frac{\gamma\,\Re g(c+iw)}{\sqrt{w}}\right)=:\frac{1}{\gamma}\sqrt{v}\,C.
    \end{align*}
    Obviously, $C$ is positive and only depends on $c$ and $w$.
    Expanding by $\Re g(c+iv)+\Re g\left(c+iw\right)$ and using \eqref{eq:reg} yields
    \begin{align}\label{eq:diffreg}
        \Re g(c+iv)-\Re g\left(c+iw\right)
        &=\frac{\left(\Re g(c+iv)\right)^2-\left(\Re g\left(c+iw\right)\right)^2}{\Re g(c+iv)+\Re g(c+iw)}\nonumber\\
        &=\frac{\abs1+\frac{2}{\gamma^2}(c+iv)\abs+1+\frac{2}{\gamma^2}c-\left(\abs1+\frac{2}{\gamma^2}(c+iw)\abs+1+\frac{2}{\gamma^2}c\right)}{2\left(\Re g(c+iv)+\Re g(c+iw)\right)}\nonumber\\
        &\geq \frac{\abs1+\frac{2}{\gamma^2}(c+iv)\abs-\abs1+\frac{2}{\gamma^2}(c+iw)\abs}{\frac{2}{\gamma}\sqrt{v}\,C}.
    \end{align}
    We want to repeat this idea of expanding to simplify the numerator, therefore we need 
    \begin{align*}
        \left|1+\frac{2}{\gamma^2}(c+iv)\right|+\left|1+\frac{2}{\gamma^2}(c+iw)\right|
        &\leq 1+\frac{2}{\gamma^2}(c+v)+1+\frac{2}{\gamma^2}(c+w)= \frac{2v}{\gamma^2}\left(\frac{\gamma^2}{v}+\frac{2c}{v}+1+\frac{w}{v}\right)\\
        &\leq \frac{2v}{\gamma^2}\left(2+\frac{2c+\gamma^2}{w}\right)=:\frac{2v}{\gamma^2}C'.
    \end{align*}
    Again, $C'$ is clearly positive and only depending on $c$ and $w$.
    Combined with \eqref{eq:diffreg}, we have
    \begin{align*}
        \Re g(c+iv)-\Re g\left(c+iw\right)&
        \geq \frac{\abs1+\frac{2}{\gamma^2}(c+iv)\abs-\abs1+\frac{2}{\gamma^2}(c+iw)\abs}{\frac{2}{\gamma}\sqrt{v}\,C}\\
        &=\frac{\left(1+\frac{2}{\gamma^2}c\right)^2+\frac{4}{\gamma^4}v^2-\left(\left(1+\frac{2}{\gamma^2}c\right)^2+\frac{4}{\gamma^4}w^2\right)}{\frac{2}{\gamma}\sqrt{v}\,C\left(\left|1+\frac{2}{\gamma^2}(c+iv)\right|+\left|1+\frac{2}{\gamma^2}(c+iw)\right|\right)}\\
        &\geq \frac{\frac{4}{\gamma^4}v^2-\frac{4}{\gamma^4}w^2}{\frac{2}{\gamma}\sqrt{v}\,C\, \frac{2}{\gamma^2}v\,C'}
        =\frac{1}{\gamma\,C\,C'}\left(\sqrt{v}\,-\frac{w^2}{v^{3/2}}\right)\geq\frac{1}{\gamma\,C\,C'}\left(\sqrt{v}\,-\sqrt{w}\right).
    \end{align*}
    This is the claimed result with $C_{c,w}:=\frac{1}{\gamma\,C\,C'}$, which is positive and depends only on $c$ and $w$.
\end{proof}
Finally, we collect some complex Taylor series expansions:
\begin{lem}\label{lem:complextaylorestimates}
    Fix $r>0$ and $z_0\in\C$. Let $f$ be holomorphic on $B_{r'}(z_0)$ for some $r'>r$. Then
    \[f(z)=\sum_{k=0}^\infty \frac{f^{(k)}(z_0)}{k\,!}(z-z_0)^k\]
    for all $z\in B_r(z_0)$ and
    \[\left|f(z)-\sum_{k=0}^n \frac{f^{(k)}(z_0)}{k\,!}(z-z_0)^k\right|\leq \max_{\abs z'-z_0\abs=r}\abs f(z')\abs\frac{\abs z-z_0\abs^{n+1}}{r^{n+1}}\frac{1}{1-\frac{\abs z-z_0\abs}{r}}.\]
\end{lem}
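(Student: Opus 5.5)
The plan is to derive this from the Cauchy integral formula for the Taylor coefficients, together with a geometric series bound on the tail. Since $f$ is holomorphic on $B_{r'}(z_0)$ with $r'>r$, the closed disc $\overline{B_r(z_0)}$ lies inside the domain of holomorphy. For $z\in B_r(z_0)$, I will start from
\[f(z)=\frac{1}{2\pi i}\oint_{|\zeta-z_0|=r}\frac{f(\zeta)}{\zeta-z}\,d\zeta .\]
Then I expand
\[\frac{1}{\zeta-z}=\frac{1}{\zeta-z_0}\sum_{k\ge0}\Bigl(\frac{z-z_0}{\zeta-z_0}\Bigr)^k .\]
This series converges uniformly on the circle because $|z-z_0|/r<1$, so summation and integration can be interchanged. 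The Cauchy formula for derivatives,
\[\frac{f^{(k)}(z_0)}{k!}=\frac{1}{2\pi i}\oint_{|\zeta-z_0|=r}\frac{f(\zeta)}{(\zeta-z_0)^{k+1}}\,d\zeta,\]
then gives the series representation in the first claim.

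For the remainder bound, I write $M:=\max_{|z'-z_0|=r}|f(z')|$, which is finite by continuity on the compact circle. The Cauchy estimates give $\bigl|f^{(k)}(z_0)/k!\bigr|\le M r^{-k}$, by bounding the integrand by $M/r^{k+1}$ and the length of the contour by $2\pi r$. The remainder after the degree-$n$ partial sum is the tail $\sum_{k\ge n+1}\frac{f^{(k)}(z_0)}{k!}(z-z_0)^k$. In absolute value this is at most
\[M\sum_{k\ge n+1}\Bigl(\frac{|z-z_0|}{r}\Bigr)^k = M\frac{|z-z_0|^{n+1}}{r^{n+1}}\cdot\frac{1}{1-\frac{|z-z_0|}{r}},\]
which is exactly the claimed inequality.

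An alternative route would be to sum the geometric series in closed form inside the integral. This gives the remainder as $\frac{1}{2\pi i}\oint \frac{f(\zeta)}{\zeta-z}\bigl(\frac{z-z_0}{\zeta-z_0}\bigr)^{n+1}d\zeta$, which yields the same bound via $|\zeta-z|\ge r-|z-z_0|$. The only delicate point is justifying the interchange of sum and integral, and uniform convergence of the geometric series on the circle handles it. I do not expect any real obstacle here, since this is the standard Cauchy-estimate argument.
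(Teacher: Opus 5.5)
Your proposal is correct and follows essentially the same route as the paper: the Cauchy estimate $|f^{(k)}(z_0)|/k!\le M r^{-k}$, followed by summing the geometric tail $\sum_{k\ge n+1}(|z-z_0|/r)^k$. The differences are minor. You derive the Taylor expansion from the Cauchy integral formula instead of citing it, and you take the Cauchy estimate directly on the circle $|\zeta-z_0|=r$, which makes the paper's appeal to the maximum modulus theorem unnecessary.
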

\begin{proof}
    it is well known that a holomorphic function can be written as its Taylor series, see Section 2.4 in \cite{Bourchtein} for example.
    Using the maximum modulus theorem and Cauchy's estimate (10.24 and 10.26 in \cite{Rudin}), it holds
    \begin{align*}
        &\left|f(z)-\sum_{k=0}^n \frac{f^{(k)}(z_0)}{k\,!}(z-z_0)^k\right|
        \leq\sum_{k=n+1}^\infty \left|f^{(k)}(z_0)\right|\frac{\abs z-z_0\abs^k}{k\,!}\\
        &\leq\sum_{k=n+1}^\infty \, \max_{z'\in B_r(z_0)}\,\abs f(z')\abs \frac{k!}{r^k}\frac{\abs z-z_0\abs^k}{k\,!}
        =\sum_{k=n+1}^\infty \, \max_{\abs z'-z_0\abs=r}\,\abs f(z')\abs\frac{\abs z-z_0\abs^k}{r^k}\\
        &= \max_{\abs z'-z_0\abs=r}\,\abs f(z')\abs\, \frac{\abs z-z_0\abs^{n+1}}{r^{n+1}}\sum_{k=n+1}^\infty \frac{\abs z-z_0\abs^{k-(n+1)}}{r^{k-(n+1)}}\\
        &= \max_{\abs z'-z_0\abs=r}\,\abs f(z')\abs\, \frac{\abs z-z_0\abs^{n+1}}{r^{n+1}}\sum_{k=0}^\infty \frac{\abs z-z_0\abs^{k}}{r^{k}}\\
        &= \max_{\abs z'-z_0\abs=r}\,\abs f(z')\abs \frac{\abs z-z_0\abs^{n+1}}{r^{n+1}}\frac{1}{1-\frac{\abs z-z_0\abs}{r}}. \qedhere
    \end{align*}
\end{proof}
Applying Lemma \ref{lem:complextaylorestimates} to the exponential function gives the following fact.

\begin{lem}\label{lem:expTaylor}
    On the complex unit disk $B_1(0)$, the exponential function has the first order Taylor expansion
    \[e^z=1+R_0(z),\quad \abs R_0(z)\abs\leq e\frac{\abs z\abs}{1-\abs z\abs},\]
    and therefore, for any $z$ with $\abs z\abs\leq r<1$
    \[\abs e^z-1\abs=\abs R_0(z)\abs\leq \frac{e}{1-r}\abs z\abs.\]
\end{lem}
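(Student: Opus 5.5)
The plan is to apply Lemma~\ref{lem:complextaylorestimates} with $f=\exp$, $z_0=0$, $n=0$ and radius $r=1$. The exponential is entire, so it is holomorphic on $B_{r'}(0)$ for every $r'>1$, and the hypothesis of the lemma is satisfied. For $z\in B_1(0)$ the lemma then gives
\[
\left|e^z-1\right|\leq \max_{\abs z'\abs=1}\abs e^{z'}\abs\,\frac{\abs z\abs}{1}\,\frac{1}{1-\abs z\abs}.
\]

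Next I bound the maximum. On the unit circle, $\abs e^{z'}\abs=e^{\Re z'}\leq e^{1}=e$, with equality at $z'=1$. Hence, defining $R_0(z):=e^z-1$, we get $\abs R_0(z)\abs\leq e\,\frac{\abs z\abs}{1-\abs z\abs}$, which is the first claim.

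For the second claim, suppose $\abs z\abs\leq r<1$. The function $s\mapsto \frac{1}{1-s}$ is increasing on $[0,1)$, so $\frac{1}{1-\abs z\abs}\leq\frac{1}{1-r}$. This yields $\abs e^z-1\abs\leq \frac{e}{1-r}\abs z\abs$.

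There is no genuine obstacle here. The only point needing care is that Lemma~\ref{lem:complextaylorestimates} requires holomorphy on a strictly larger disc than $B_1(0)$, and this is automatic because $\exp$ is entire. As a check, the typical use in the paper with $r=\tfrac12$ gives the constant $2e$, which matches how the lemma is invoked.
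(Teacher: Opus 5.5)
Your proposal is correct and is exactly the paper's argument: the paper obtains this lemma directly by applying Lemma~\ref{lem:complextaylorestimates} to $\exp$ with $z_0=0$, $n=0$ and radius $1$, and bounding $\max_{\abs z'\abs=1}\abs e^{z'}\abs\leq e$. The second claim then follows, as you say, from $\frac{1}{1-\abs z\abs}\leq\frac{1}{1-r}$.
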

Similarly, we can treat the function $f(z):=(1+z)^{-1/2}$.
\begin{lem}\label{lem:invsqrtTaylor}
    Let $f(z):=(1+z)^{-1/2}$. For any $\abs z\abs\leq r<1$
    \[f(z)=1+R_0(z),\quad \abs R_0(z)\abs\leq\frac{1}{\sqrt{1-r}}\frac{\abs z\abs}{r}\frac{1}{1-\frac{\abs z\abs}{r}},\]
    and therefore, for any $\abs z\abs\leq r'<r$
    \[\abs f(z)-1\abs\leq\frac{1}{\sqrt{1-r}}\frac{1}{r-r'}\abs z\abs.\]
\end{lem}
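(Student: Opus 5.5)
We apply Lemma~\ref{lem:complextaylorestimates} to $f(z)=(1+z)^{-1/2}$ around $z_0=0$ with $n=0$, in the same way Lemma~\ref{lem:expTaylor} was obtained from it for the exponential.

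First, $f$ is holomorphic on $B_1(0)$: with the principal branch, $1+z$ has positive real part for $\abs z\abs<1$, so it never meets the branch cut $(-\infty,0]$. Hence for any $r<1$ we may pick $r'\in(r,1)$, and $f$ is holomorphic on $B_{r'}(0)\supset \overline{B_r(0)}$. The hypotheses of Lemma~\ref{lem:complextaylorestimates} therefore hold with radius $r$. Since $f(0)=1$, the zeroth-order expansion reads $f(z)=1+R_0(z)$ with
\[\abs R_0(z)\abs\le \max_{\abs z'\abs=r}\abs f(z')\abs\,\frac{\abs z\abs}{r}\,\frac{1}{1-\frac{\abs z\abs}{r}}\]
for $\abs z\abs<r$.

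The only nontrivial step is to bound the maximum on the circle. For $\abs z'\abs=r$ we have $\abs 1+z'\abs\ge 1-\abs z'\abs=1-r$ by the reverse triangle inequality. Since $\abs (1+z')^{-1/2}\abs=\abs 1+z'\abs^{-1/2}$, this gives $\max_{\abs z'\abs=r}\abs f(z')\abs\le (1-r)^{-1/2}$, which is the first claimed bound.

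For the second statement, let $\abs z\abs\le r'<r$. Then
\[\frac{\abs z\abs}{r}\,\frac{1}{1-\abs z\abs/r}=\frac{\abs z\abs}{r-\abs z\abs}\le \frac{\abs z\abs}{r-r'},\]
so $\abs f(z)-1\abs\le (1-r)^{-1/2}(r-r')^{-1}\abs z\abs$.

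No step is a genuine obstacle. The only point requiring care is keeping the branch of the square root consistent on the disk, and this is handled by $\Re(1+z)>0$.
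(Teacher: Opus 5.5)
Your proposal is correct and follows essentially the same route as the paper. Both apply Lemma~\ref{lem:complextaylorestimates} at $z_0=0$ with $n=0$, bound $\max_{|z'|=r}|f(z')|$ by $(1-r)^{-1/2}$ using $|1+z'|\geq 1-r$ on the circle, and obtain the second estimate from $|z|\le r'<r$. Your explicit check that the principal branch of $f$ is holomorphic on $B_1(0)$ is a detail the paper leaves implicit.
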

\begin{proof}
    Since $f(0)=1$, we have $f(z)=1+R_0(z)$. As
    \[\abs (1+re^{it})\abs^\frac12\geq(1+r\cos(t))^\frac12\geq(1-r)^\frac12\]
    we have \[\max_{\abs z\abs=r}\abs f(z)\abs=\max_{t\in[0,2\pi]}\abs f(re^{it})\abs=\max_{t\in[0,2\pi]}\frac{1}{\abs 1+re^{it}\abs^{-1/2}}\leq\frac{1}{\sqrt{1-r}},\]
    with which we get the first claim from Lemma \ref{lem:complextaylorestimates}.
    The second claim then follows immediately from the first and $\frac{1}{1-\frac{\abs z\abs}{r}}\leq\frac{1}{1-\frac{r'}{r}}$ for any $\abs z\abs\leq r'$.
\end{proof}
Since the real part of the complex root essentially behaves like a square root in the imaginary input (cf. Lemma \ref{lem:reg}(c),(d)), we need to evaluate the following integral a few times in Section~\ref{sec:unifsbp}.
\begin{lem}\label{lem:int}
    Let $a,m>0$. Then
    \[\int_0^a e^{-m\sqrt{x}}\,dx=\frac{2}{m^2}\left(1-e^{-m\sqrt{a}}(m\sqrt{a}+1)\right).\]
\end{lem}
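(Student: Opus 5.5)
This is an elementary computation: differentiate $a\mapsto \int_0^a e^{-m\sqrt{x}}\,dx$, or evaluate the integral directly by substitution. I will do it directly. Substitute $x=s^2$ with $s\ge 0$, so $dx=2s\,ds$. The integral becomes
\[
\int_0^a e^{-m\sqrt{x}}\,dx = 2\int_0^{\sqrt a} s\,e^{-ms}\,ds ,
\]
which is valid since $s\mapsto s^2$ is a $C^1$ bijection of $[0,\sqrt a]$ onto $[0,a]$.

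Next I integrate by parts with $u=s$ and $dv=e^{-ms}\,ds$, so $v=-\frac{1}{m}e^{-ms}$. This gives
\[
\int_0^{b} s\,e^{-ms}\,ds = \left[-\frac{s}{m}e^{-ms}\right]_0^{b}+\frac1m\int_0^b e^{-ms}\,ds
= -\frac{b}{m}e^{-mb}+\frac{1}{m^2}\left(1-e^{-mb}\right),
\]
with $b=\sqrt a$. Collecting terms yields $\frac{1}{m^2}\bigl(1-e^{-mb}(mb+1)\bigr)$. Multiplying by $2$ gives exactly the claimed $\frac{2}{m^2}\bigl(1-e^{-m\sqrt a}(m\sqrt a+1)\bigr)$.

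As a sanity check, I can differentiate the right-hand side with respect to $a$. Using $\frac{d}{da}\bigl(e^{-m\sqrt a}(m\sqrt a+1)\bigr)=-\frac{m^2}{2}e^{-m\sqrt a}$, the derivative of the right-hand side is $e^{-m\sqrt a}$. Both sides also vanish at $a=0$.

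There is no real obstacle. The only points to watch are the sign bookkeeping in the integration by parts and the factor $2$ from the Jacobian $dx=2s\,ds$. Positivity of $m$ is not even needed for the identity, only that $m\neq 0$. In the applications in the paper, $m=T$ (with $H\to\infty$ for the full integral to give $4/T^2$) and $m=T\frac{\gamma}{2}K_{c,r}$.
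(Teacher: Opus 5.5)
Your computation is correct and is essentially the paper's argument. The paper substitutes $y=e^{-m\sqrt{x}}$ to reduce to $\frac{2}{m^2}\int_1^{e^{-m\sqrt{a}}}\ln(y)\,dy$ and uses the antiderivative $y\ln y-y$; this is the same elementary calculation as your $x=s^2$ followed by integration by parts, since the two substitutions are related by $y=e^{-ms}$. Your derivative check and your remark that only $m\neq 0$ is needed are also correct.
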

\begin{proof}
    Substituting $y=e^{-m\sqrt{x}}$ yields $dy=\frac{-m}{2\sqrt{x}}e^{-m\sqrt{x}}\,dx$ which is equivalent to 
    $\frac{2}{m^2}\ln(y)\,dy=e^{-m\sqrt{x}}\,dx$. The integral therefore transforms to
    \[\int_0^a e^{-m\sqrt{x}}\,dx=\int_1^{e^{-m\sqrt{a}}}\frac{2}{m^2}\ln(y)\,dy=\frac{2}{m^2}\bigg(y\ln(y)-y\big\vert_1^{e^{-m\sqrt{a}}}\bigg)=\frac{2}{m^2}\left(e^{-m\sqrt{a}}(-m\sqrt{a}-1)+1\right).\]
\end{proof}

Finally, we need the moment generating function of the square of a general normal distribution. 

\begin{lem}\label{lem:generalchi^2mgf}
    Let $X\sim\NN(\mu,\sigma^2)$. The moment generating function of $X^2$ is given by
    \begin{equation*}
        \E\left[\exp\left(tX^2\right)\right]
        =\left(1-2\sigma^2 t\right)^{-\frac12}\,\exp\left(\frac{\mu^2 t}{1-2\sigma^2 t}\right)\quad\forall~t<\frac{1}{2\sigma^2}.
    \end{equation*}
\end{lem}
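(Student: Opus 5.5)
The plan is to complete the square in the exponent and use the Gaussian integral. Let $X=\mu+\sigma N$ with $N\sim\NN(0,1)$. Then
\[
\E\left[e^{tX^2}\right]=\frac{1}{\sqrt{2\pi}\,\sigma}\int_{\R}\exp\left(tx^2-\frac{(x-\mu)^2}{2\sigma^2}\right)dx .
\]
For $t<\frac{1}{2\sigma^2}$ the coefficient of $x^2$ in the exponent is $-\frac{1-2\sigma^2t}{2\sigma^2}<0$, so the integrand is integrable. For $t\geq\frac{1}{2\sigma^2}$ the integral diverges, which explains the stated range. Set $s^2:=\frac{\sigma^2}{1-2\sigma^2t}$.

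Next I complete the square. The exponent equals
\[
-\frac{1}{2s^2}\left(x-\frac{s^2\mu}{\sigma^2}\right)^2+\frac{s^2\mu^2}{2\sigma^4}-\frac{\mu^2}{2\sigma^2}.
\]
Integrating the Gaussian part gives $\sqrt{2\pi}\,s$. Dividing by $\sqrt{2\pi}\,\sigma$ produces the prefactor $s/\sigma=(1-2\sigma^2t)^{-1/2}$.

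For the constant term, I use
\[
\frac{\mu^2}{2\sigma^2}\left(\frac{1}{1-2\sigma^2t}-1\right)=\frac{\mu^2}{2\sigma^2}\cdot\frac{2\sigma^2t}{1-2\sigma^2t}=\frac{\mu^2t}{1-2\sigma^2t}.
\]
This is exactly the claimed exponent.

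The only delicate points are the degenerate case $\sigma=0$, where the formula reduces to $e^{t\mu^2}$ trivially, and tracking the range of $t$. Neither is a real obstacle.
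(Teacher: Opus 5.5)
Your proposal is correct and follows essentially the same route as the paper: the paper completes the square with $d=1-2\sigma^2t$ (so your $s^2=\sigma^2/d$ and your centre $s^2\mu/\sigma^2=\mu/d$) and then evaluates the Gaussian integral. It arrives at the same prefactor $d^{-1/2}$ and the same constant term $\frac{\mu^2}{2\sigma^2}\bigl(\frac1d-1\bigr)=\frac{\mu^2t}{1-2\sigma^2t}$. Your remark on the degenerate case $\sigma=0$ is a small addition the paper does not make.
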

\begin{proof}
    Let $d:=1-2\sigma^2 t$. As 
    \begin{align*}
        tx^2-\frac{(x-\mu)^2}{2\sigma^2}
        &=-\frac{1}{2\sigma^2}\left(dx^2-2x\mu+\mu^2\right)
        =-\frac{d}{2\sigma^2}\left(x^2-2x\frac{\mu}{d}+\left(\frac{\mu}{d}\right)^2-\left(\frac{\mu}{d}\right)^2+\frac{\mu^2}{d}\right)\\
        &=-\frac{d}{2\sigma^2}\left(x-\frac{\mu}{d}\right)^2+\frac{\mu^2}{2\sigma^2}\left(\frac{1}{d}-1\right),
    \end{align*}
    we have
    \begin{align*}
        \E\left[\exp\left(tX^2\right)\right]
        &=\frac{1}{\sqrt{2\pi\sigma^2}}\int_{-\infty}^\infty e^{tx^2}\,e^{-\frac{(x-\mu)^2}{2\sigma^2}}\,dx
        =e^{\frac{\mu^2}{2\sigma^2}\left(\frac{1}{d}-1\right)}\,\frac{1}{\sqrt{2\pi\sigma^2}}\int_{-\infty}^\infty e^{-\frac{d}{2\sigma^2}\left(x-\frac{\mu}{d}\right)^2}\,dx\\
        &=\frac{1}{\sqrt{d}}e^{\frac{\mu^2}{2\sigma^2}\left(\frac{1-d}{d}\right)}\,\frac{1}{\sqrt{2\pi\frac{\sigma^2}{d}}}\int_{-\infty}^\infty e^{-\frac{d}{2\sigma^2}\left(x-\frac{\mu}{d}\right)^2}\,dx
        =\frac{1}{\sqrt{1-2\sigma^2 t}}e^{\frac{\mu^2}{2\sigma^2}\left(\frac{2\sigma^2t}{1-2\sigma^2t}\right)}\\
        &=\left(1-2\sigma^2 t\right)^{-\frac12}\,\exp\left(\frac{\mu^2 t}{1-2\sigma^2 t}\right),
    \end{align*}
    for all $\sigma^2 t<\frac12$.
\end{proof}

\newpage
\bibliographystyle{plain}
\bibliography{references}

@unpublished{papertobiasschmidt,
  author = "Schmidt, Tobias",
  title  = "Conditioned {B}rownian {M}otion and {L}ocal {E}quivalence of {P}ath {E}nsembles",
  note="Preprint,	arXiv:2608.19744"
}

@article{doob57,
  author    = {Doob, Joseph L.},
  title     = {Conditional {B}rownian motion and the boundary limits of harmonic functions},
  journal   = {Bulletin de la Soci{\'e}t{\'e} Math{\'e}matique de France},
  volume    = {85},
  pages     = {431--458},
  year      = {1957},
  publisher = {Soci{\'e}t{\'e} math{\'e}matique de France},
  url       = {https://www.numdam.org/item/BSMF_1957__85__431_0/}
}

@article{Fatalov2009,
  author  = {Fatalov, Vladimir R.},
  title   = {Occupation Time and Exact Asymptotics of Distributions of
             {$L^p$}-Functionals of the {Ornstein--Uhlenbeck} Processes,
             {$p>0$}},
  journal = {Theory of Probability and Its Applications},
  volume  = {53},
  number  = {1},
  pages   = {13--36},
  year    = {2009},
  doi     = {10.1137/S0040585X97983407}
}

@article{BercuRouault2002,
  author  = {Bercu, Bernard and Rouault, Alain},
  title   = {Sharp Large Deviations for the {Ornstein--Uhlenbeck} Process},
  journal = {Theory of Probability and Its Applications},
  volume  = {46},
  number  = {1},
  pages   = {1--19},
  year    = {2002},
  doi     = {10.1137/S0040585X97978737}
}

@article{ChetriteTouchette2015,
  author  = {Chetrite, Rapha{\"e}l and Touchette, Hugo},
  title   = {Nonequilibrium {M}arkov Processes Conditioned on Large Deviations},
  journal = {Annales Henri Poincar{\'e}},
  volume  = {16},
  number  = {9},
  pages   = {2005--2057},
  year    = {2015},
  doi     = {10.1007/s00023-014-0375-8}
}

@article{BrycDembo1997,
  author  = {Bryc, W{\l}odzimierz and Dembo, Amir},
  title   = {Large Deviations for Quadratic Functionals of {G}aussian Processes},
  journal = {Journal of Theoretical Probability},
  volume  = {10},
  number  = {2},
  pages   = {307--332},
  year    = {1997},
  doi     = {10.1023/A:1022656331883}
}

@article{BercuGamboaLavielle2000,
  author  = {Bercu, Bernard and Gamboa, Fabrice and Lavielle, Marc},
  title   = {Sharp {L}arge {D}eviations for {G}aussian {Q}uadratic {F}orms with {A}pplications},
  journal = {ESAIM: Probability and Statistics},
  volume  = {4},
  pages   = {1--24},
  year    = {2000}
}

@article{Dankel1991,
  author  = {Dankel Jr., Thad},
  title   = {On the Distribution of the Integrated Square of the
             {Ornstein--Uhlenbeck} Process},
  journal = {SIAM Journal on Applied Mathematics},
  volume  = {51},
  number  = {2},
  pages   = {568--574},
  year    = {1991},
  doi     = {10.1137/0151029}
}

@article{duBuissonTouchette2023,
  author  = {{du Buisson}, Johan and Touchette, Hugo},
  title   = {Dynamical Large Deviations of Linear Diffusions},
  journal = {Physical Review E},
  volume  = {107},
  number  = {5},
  pages   = {054111},
  year    = {2023},
  doi     = {10.1103/PhysRevE.107.054111}
}

@article{wcWhitt,
author = {Whitt, Ward},
title = {Weak Convergence of Probability Measures on the Function Space $C\lbrack 0, \infty)$},
volume = {41},
journal = {The Annals of Mathematical Statistics},
number = {3},
publisher = {Institute of Mathematical Statistics},
pages = {939 -- 944},
year = {1970},
doi = {10.1214/aoms/1177696970},
URL = {https://doi.org/10.1214/aoms/1177696970}
}

@book{BorodinSalminen,
author = {Borodin, Andrei N. and Salminen, Paavo},
year = {2002},
title = {Handbook of Brownian Motion — Facts and Formulae},
publisher = {Birkhäuser Basel},
isbn = {978-3-7643-6705-3},
doi = {10.1007/978-3-0348-8163-0}
}

@article{KnightBM,
 author = {Knight, Frank B.},
 journal = {Transactions of the American Mathematical Society},
 pages = {173--185},
 publisher = {American Mathematical Society},
 title = {Brownian Local Times and Taboo Processes},
 volume = {143},
 year = {1969}
}

@article{ALSbmtoou,
  author  = {Aurzada, Frank and Lifshits, Mikhail and Schickentanz, Dominic T.},
  title   = {Brownian motion conditioned to have restricted {$L_2$}-norm},
  journal = {Annales de l'Institut Henri Poincar{\'e}, Probabilit{\'e}s et Statistiques},
  volume  = {62},
  number  = {2},
  pages   = {941--955},
  year    = {2026},
  doi     = {10.1214/24-AIHP1532}
}

@book{Bourchtein,
author = {Bourchtein, Andrei and Bourchtein, Ludmila},
year = {2021},
title = {Complex Analysis},
publisher = {Springer Singapore},
isbn = {978-981-15-9218-8},
doi = {10.1007/978-981-15-9219-5}
}

@book{Rudin,
author = {Rudin, Walter},
year = {1987},
title = {Real and Complex Analysis},
publisher = {McGraw-Hill},
isbn = {0-07-100276-6},
}

@article{NazarovPetrova2023,
  author  = {Nazarov, Alexander I. and Petrova, Yulia},
  title   = {{$L_2$}-Small Ball Asymptotics for {G}aussian Random Functions:
             A Survey},
  journal = {Probability Surveys},
  volume  = {20},
  pages   = {608--663},
  year    = {2023},
  doi     = {10.1214/23-PS20}
}

@article{Li2001,
  author  = {Li, Wenbo V.},
  title   = {Small {B}all {P}robabilities for {G}aussian {M}arkov {P}rocesses
             under the {$L^p$}-{N}orm},
  journal = {Stochastic Processes and their Applications},
  volume  = {92},
  number  = {1},
  pages   = {87--102},
  year    = {2001},
  doi     = {10.1016/S0304-4149(00)00072-7}
}

\end{document}